\DeclarePairedDelimiterX{\Iintv}[1]{\llbracket}{\rrbracket}{\iintvargs{#1}}
\NewDocumentCommand{\iintvargs}{>{\SplitArgument{1}{,}}m}
{\iintvargsaux#1} %
\NewDocumentCommand{\iintvargsaux}{mm} {#1\mkern1.5mu,\mkern1.5mu#2}
\newtheorem*{rep@theorem}{\rep@title}
\newcommand{\newreptheorem}[2]{%
\newenvironment{rep#1}[1]{%
 \def\rep@title{#2 \ref{##1}}%
 \begin{rep@theorem}}%
 {\end{rep@theorem}}}
\definecolor{RedOrange}{cmyk}{ 0, 0.77, 0.87, 0}
\definecolor{RoyalPurple}{cmyk}{ 0.84, 0.53, 0, 0}
\definecolor{YellowGreen}{cmyk}{ 0.44, 0, 0.74, 0}
\definecolor{Fuchsia}{cmyk}{ 0.47, 0.91, 0, 0.08}
\definecolor{Blue}{cmyk}{ 0.84, 0.53, 0, 0}
\definecolor{BlueViolet}{cmyk}{ 0.84, 0.53, 0, 0}
\definecolor{Black}{cmyk}{ 0.75, 0.68, 0.67, 0.9}
\newcommand{\lf}{\lfloor}
\newcommand{\rf}{\rfloor}
\newcommand{\R}{\mathbb{R}}
\newcommand{\N}{\mathbb{N}}
\newcommand{\e}{\epsilon}
\newcommand{\E}{\mathbb{E}}
\newcommand{\Z}{\mathbb{Z}}
\renewcommand{\O}{\mathbb{O}}
\renewcommand{\P}{\mathbb{P}}
\newcommand{\kE}{\mathcal{E}}
\newcommand{\rmT}{\mathrm{T}}
\newcommand{\rmC}{\mathrm{C}}
\newcommand{\lin}{\left[\kern-0.15em\left[}
\newcommand{\rin} {\right]\kern-0.15em\right]}
\newcommand{\linf}{[\kern-0.15em [}
\newcommand{\rinf} {]\kern-0.15em ]}
\newcommand{\ilin}{\left]\kern-0.15em\left]}
\newcommand{\irin} {\right[\kern-0.15em\right[}
\def\ben#1{\begin{equation}#1\end{equation}}
\def\al#1{\begin{align*}#1\end{align*}}
\def\aln#1{\begin{align}#1\end{align}}
\newcommand{\secno}[1]{\thesection.\arabic{#1}}
\renewcommand{\tilde}{\widetilde}
\newtheorem{lem}{Lemma}[section]
\newtheorem{remark}[lem]{Remark}
\newtheorem{prop}[lem]{Proposition}
\newtheorem{thm}[lem]{Theorem}
\newtheorem {Def}[lem] {Definition}
\definecolor{lilas}{RGB}{182, 102, 210}
\numberwithin{equation}{section}
\def\ben#1{\begin{equation}#1\end{equation}}
\title[Maximal edge-traversal time in First-passage percolation]
{Maximal edge-traversal time in First-passage percolation}
\date{\today}
\author{Shuta Nakajima} 
\address[Shuta Nakajima]
{University of Basel, Basel, Switzerland}
\email{shuta.nakajima@unibas.ch}
\keywords{First-passage percolation, maximal edge-traversal time.}
\subjclass[2010]{Primary 60K37; secondary 60K35; 82A51; 82D30}
\begin{document}
\maketitle

\begin{abstract}
 In this paper, we study the maximal edge-traversal time on optimal paths in First-passage percolation on the lattice $\Z^d$ for several edge distributions, including the Pareto and Weibull distributions. It is known to be unbounded when the edge distribution has unbounded support [J. van den Berg and H. Kesten. Inequalities for the time constant in first-passage percolation. Ann. Appl. Probab.
56-80, 1993]. We determine the order of the growth depending on the tail of the edge distribution.
\end{abstract}
\maketitle

\section{Introduction}
First Passage Percolation (FPP) is a model of the spread of a fluid through a random medium which was first introduced by Hammersley and Welsh in 1965. In FPP, a graph with random weights is given and we consider the optimization problem of the passage time between two fixed vertices. The minimum value is called the first passage time and it represents the time when the fluid reaches from one point to the other. From the viewpoint of an optimization problem, properties of the optimal path that attains minimum value are also of interest. Theoretical physicists predicted that the front of spread in FPP asymptotically satisfies KPZ-equation \cite{KPZ86} in some sense. Moreover, they have found the relationship between the fluctuation of surface and the deviation of optimal paths, the so-called {\it scaling relation} \cite{KS91}. Over 50 years, as mathematical techniques have been developed for these problems, there has been significant progress especially about the asymptotic and the fluctuation of the first passage time and the surface growth. On the other hand, not much is known about the properties of the optimal path. The above-mentioned scaling relation concerns the geometry of the optimal path but it has not been proved fully rigorously \cite{Chat}. This paper studies the maximal weight of the edges on an optimal path aiming to provide a better understanding of how the medium along the optimal path looks. For more on the background and known results in FPP, we refer the reader to \cite{ADH}. 
\subsection{The setting of the model}
In this paper, we consider the first passage percolation on the lattice $\Z^d$. The model is defined as follows. An element of $\Z^d$ is called a vertex. Denote by ${\rm E}^d$ the set of non-oriented edges of the lattice $\Z^d$:
$${\rm E}^d=\{\langle v,w \rangle|~v,w\in\Z^d,~|v-w|_1=1\},$$
where $|u|_1=\sum^d_{i=1}|u_i|$ for $u\in\Z^d$. We say that $v$ and $w$ are adjacent if $|v-w|_1=1$. With a slight abuse of notation, an edge $e=\langle v,w \rangle$ is considered as a subset of $\Z^d$ such as $e=\{v,w\}$. We assign a non-negative random variable $\tau_e$ on each edge $e$. Assume that the collection $\tau=\{\tau_e\}_{e\in {\rm E}^d}$ is independent and identically distributed with a common distribution $F$. Let $(\Omega,\mathcal{F},\P)$ be the probability space and denote by $\E$ its expectation. A {\it path} $\gamma$ is a finite sequence $(x_0,\cdots,x_l)$ of $\Z^d$ such that for any $1\le i\le l$, $x_i$ and $x_{i-1}$ are adjacent. When $x_0=v$ and $x_l=w$ for a path $\gamma=(x_0,\cdots,x_l)$, we write $\gamma:v\to w$ and then $\gamma$ is said to be a path from $v$ to $w$. It is sometimes convenient to regard a path $\gamma=(x_0,\cdots,x_l)$ as a sequence of edges such as $(\langle x_0,x_1\rangle,\cdots\langle x_{l-1},x_l\rangle)$. Thus we will use this convention with some abuse of notation.

Given a finite path $\gamma$, we define the {\it passage time} of $\gamma$ as
$$\rmT(\gamma)=\sum_{e\in\gamma}\tau_e.$$
Given two vertices $v,w\in\Z^d$, we define the {\it first passage time} from $v$ to $w$ as
$$\rmT(v,w)=\inf_{\gamma:v\to w}\rmT(\gamma),$$
where the infimum is taken over all finite paths from $v$ to  $w$. We say that a finite path $\gamma:v\to w$ is {\it optimal} if $\rmT(\gamma)=\rmT(v,w)$. Denote by $\O(v,w)$ the set of all optimal paths from $v$ to $w$:
$$\O(v,w)=\{\gamma:v\to w|~\rmT(\gamma)=\rmT(v,w)\}.$$
It is proved in \cite{Kes86} that if $\P(\tau_e=0)\neq p_c(d)$, then at least one optimal path exists for any endpoints and if $\P(\tau_e=0)<p_c(d)$, then $\O(v,w)$ is always a finite set, where $p_c(d)$ is the critical probability of $d$-dimensional percolation. It should be noted that the same questions are still open when $\P(\tau_e=0)= p_c(d)$.

It is easy to see that $\rmT:\Z^d\times\Z^d\to\R_{\geq 0}$ is pseudometric. Hence optimal paths are sometimes called {\it geodesics}. Given a path $\gamma$, we set the {\it maximal edge-traversal time} (or the {\it maximal weight}) of $\gamma$ as 
$$\mathcal{M}(\gamma)=\sup_{e\in \gamma}\tau_e.$$
An edge $e$ is said to be a maximal edge for $\gamma$ if $e$ belongs to $\gamma$ and it attains the maximal weight of $\gamma$. In this paper, we investigate the growth rate of the maximum weight of optimal paths.
\subsection{Overview}
Edge weights along an optimal path are important research topics. In addition, there are many applications in the study of the geometry of optimal paths. For examples, Bates \cite{Bates20} proves that the empirical distribution of an optimal path, $\sum_{e\in \gamma} \delta_{\tau_e}$ for $\gamma\in\O(0, n\mathbf{e}_1)$ where $\delta_x$ is the Dirac measure at $x$, converges to some distribution independent of a choice of the optimal path, and as a corollary, he obtained the law of large numbers of the length of the optimal path. Also, the authors in \cite{JLS20} studied the tail bounds of the empirical distributions.

Van den Berg and Kesten proved in \cite{BK93} that the maximal edge-traversal times of optimal paths go to infinity as the endpoint goes to infinity if the distribution $F$ is unbounded, as a special case of more general theorems. The problem of the speed of the divergence is natural to consider and appears in \cite{ADH} as an open problem (Open problem 2).

Since the formulation of our results will be a bit complicated, we state some illustrative consequences first when the distribution is continuous, i.e. $\P(\tau_e=a)=0$ for any $a\in \R$. (See Thoerem~\ref{thm1} and Remark~\ref{rem1}.) If $\P(\tau_e\geq t)= \exp\{-t^{r+o(1)}\}$ with $r>0$ for any sufficiently large $t\in\R$, then the following occurs with high probability: for any optimal path $\pi$ from $0$ to $x$,
\begin{equation}\label{illustrate}
  \mathcal{M}(\pi)=\begin{cases}
(\log{|x|_1})^{\frac{1}{1+r}+o(1)}, &\text{ if }r\le d-1,\\
(\log{|x|_1})^{\frac{1}{d}+o(1)}, &\text{ if }d-1<r\leq d,\\
    (\log{|x|_1})^{\frac{1}{r}+o(1)}, &\text{ if }d<r,\\
  \end{cases}
  \end{equation}
where $o(1)\to 0$ as $|x|\to \infty$. It should be noted that if, in addition, we remove $o(1)$ in the assumption about the distributions, then we get the same results without $o(1)$ except when $r=d$ or $r=d-1$. And, if $ \alpha_1 t^{-\beta}\leq\P(\tau_e\geq t)\le  \alpha_2 t^{-\beta}$ for any sufficiently large $t>0$ with some constants $\beta>2$ and $\alpha_1,\alpha_2>0$, then the order becomes
$$\mathcal{M}(\pi)\asymp \frac{\log{|x|_1}}{\log{\log{|x|_1}}}.$$
These transitions itself are interesting and closely related to that of the large deviations of the first passage time \cite{CGM,CN21}.

In the proofs, one can see that different geometric pictures around the maximal edges appear. It would be interesting if there is a transition of some quantity about the configurations around the maximal edge, such as the average weight around the maximal edge.\\

\subsection{Main results}
We only consider the optimal paths from $0$ to $N\mathbf{e}_1$, though all of the results also hold for any direction. For the sake of the simplicity, we write ${\rm T}_N={\rm T}(0,N\mathbf{e}_1)$ and $\O_N=\O(0,N\mathbf{e}_1)$. Given $d\ge 2$ and $r\ge 0$, we set ${\rm f}_{d,r}(N)$ as
\begin{equation}\label{order}
  {\rm f}_{d,r}(N)=\begin{cases}
    (\log{N})(\log{\log{N}})^{-1}, &\text{ if }r=0,\\
  (\log{N})^{\frac{1}{1+r}}, &\text{ if }0<r< d-1,\\
  (\log{N})^{\frac{1}{d}}(\log{\log{N}})^{\frac{d-2}{d}}, &\text{ if }r=d-1,\\
  (\log{N})^{\frac{1}{d}} , &\text{ if }d-1< r< d,\\
      (\log{N})^{\frac{1}{d}}(\log{\log{N}})^{-\frac{1}{d}}, &\text{ if }r= d,\\
  (\log{N})^{\frac{1}{r}} , &\text{ if }d<r.\\
  \end{cases}
\end{equation}
 First we state the upper bound for maximal weight.
\begin{thm}\label{thm1}
Let $d\ge 2$. Suppose that there exist constants $r\in(0,\infty)$, $b,$ $a>0$ such that for $t\geq 0$, 
\begin{equation}\label{UB condition}
  \begin{split}
\P(\tau_e\geq{}t)\le a e^{-bt^{r}},
  \end{split}
\end{equation}
and $\P(\tau_e=0)<p_c(d).$ Then, there exists a positive constant $C$ such that 
$$\lim_{N\to\infty}\P\left(\sup_{\gamma\in\text{$\O_N$}}\mathcal{M}(\gamma)\leq{}C{}{\rm f}_{d,r}(N)\right)=1.$$
\end{thm}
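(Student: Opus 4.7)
The plan is to show that, with probability tending to one, every edge $e$ whose weight exceeds $M:=Cf_{d,r}(N)$ is surrounded by a configuration admitting a cheaper bypass, so that $e$ cannot lie on any optimal path in $\mathrm{Opt}_N$. A standard a priori bound of the form $t(0,N\mathbf{e}_1)=O(N)$ almost surely, combined with the non-negativity of the $\tau_e$, confines every element of $\mathrm{Opt}_N$ to a polynomial box $B_N\subset\Z^d$ with probability $1-o(1)$, so a union bound over the $O(N^d)$ edges of $B_N$ suffices. Lemma~\ref{avoid on A} supplies the precise avoidability criterion by identifying a local event $A=A(e,L)$, measurable with respect to the weights in a box of side $L$ around $e$ other than $\tau_e$, whose occurrence rules $e$ out of every optimal path. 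Since $\tau_e$ is independent of $A$, the theorem is then reduced to proving
\[N^d\cdot\P(\tau_e>M)\cdot\P(A\text{ fails})\longrightarrow 0,\]
and since \eqref{UB condition} directly gives $\P(\tau_e>M)\le ce^{-bM^r}$, everything hinges on a sharp bound for $\P(A\text{ fails})$.

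I would establish this estimate by constructing, inside the box around $e$, a family of $k_dL^{d-1}$ edge-disjoint bypass paths of length $O(L)$ that avoid $e$ (for instance by foliating the box into $(d-1)$-dimensional lamellae transverse to $e$ and routing each lamella around $e$), so that by independence of disjoint edge sets
\[\P(A\text{ fails})\le[\P(t(\pi)>M)]^{k_dL^{d-1}}\]
for a single bypass $\pi$ of length $\le C_dL$. The tail of $t(\pi)$ is then estimated from \eqref{UB condition} in two complementary ways: the pigeonhole inclusion $\{t(\pi)>M\}\subseteq\{\max_{e'\in\pi}\tau_{e'}>M/|\pi|\}$ yields the single-big-jump bound $\P(t(\pi)>M)\le C_dLe^{-b(M/C_dL)^r}$, which is tight when $r<1$, while a Cram\'er-type sub-exponential concentration estimate for sums of Weibull$(r)$ variables gives $\P(t(\pi)>M)\le\exp(-cM^r/L^{r-1})$, which is tight when $r\ge 1$.

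Taking logarithms, the problem becomes the verification of
\[bM^r+k_dL^{d-1}\bigl[b(M/L)^r-\log L\bigr]\gg d\log N,\]
and the six cases of $f_{d,r}$ emerge from the appropriate choice of $L$ in each regime: for $r>d$ one takes $L=O(1)$ and the $bM^r$ term alone forces $M\asymp(\log N)^{1/r}$; for $d-1<r<d$ one picks $L\asymp M$ so that the Cram\'er bound gives an effective rate $\asymp M^d$, matching $d\log N$ to yield $M\asymp(\log N)^{1/d}$; for $0<r<d-1$ the single-big-jump contribution dominates and the optimisation yields $M\asymp(\log N)^{1/(1+r)}$; while the two borderline values $r\in\{d-1,d\}$ are exactly where the main exponential term and the $k_dL^{d-1}\log L$ correction are of the same order, so the required $(\log\log N)$-refinements appear only after tracking this correction to the next logarithmic order. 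The main obstacle is precisely this matching at the borderline cases $r=d-1$ and $r=d$, where the polynomial $\log L$ prefactor from pigeonhole and the $L^{d-1}$ count of disjoint bypasses must be balanced with enough precision to produce the exact exponents $(d-2)/d$ and $-1/d$ of $\log\log N$ appearing in \eqref{order}; a secondary but non-trivial point is verifying that the geometric foliation indeed yields $k_dL^{d-1}$ genuinely edge-disjoint bypasses of length $O(L)$, uniformly in $L$.
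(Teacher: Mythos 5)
Your overall architecture — reduce to a local avoidability event around each heavy edge, use a priori confinement of geodesics, and union-bound over $O(N^d)$ edges — matches the paper's. The $r>d$ case is handled identically (just the union bound over single edges). The genuine gap lies in the probabilistic estimate for $\P(A\text{ fails})$ and in what the local event $A$ actually has to be.

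\textbf{The pigeonhole bound is too weak for small $r$.} Your bound $\P(t(\pi)>M)\le C_dL\,e^{-b(M/C_dL)^r}$ carries the polynomial prefactor $C_dL$, and after raising to the power $L^{d-1}$ you are left with a prefactor $L^{L^{d-1}}=e^{L^{d-1}\log L}$ competing against the exponential gain $e^{-bM^rL^{d-1-r}}$. Optimizing over $L$ forces $L\lesssim M/(\log M)^{1/r}$, and the resulting exponent is of order $M^{d-1}/(\log M)^{(d-1-r)/r}$. Setting $M=Cf_{d,r}(N)=C(\log N)^{1/(1+r)}$, this exponent has order $(\log N)^{(d-1)/(1+r)}$ divided by polylog, and $(d-1)/(1+r)>1$ only when $r<d-2$. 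In particular, for $d=2$ and \emph{any} $0<r<1$, or more generally for $d-2\le r<d-1$, the exponent is $o(\log N)$ and the union bound fails. The paper avoids this by invoking a genuine large-deviation estimate for sums of stretched-exponential variables (the Gantert–Ramanan–Rembart bound, \eqref{LDP-bound}), which gives $\P(t(\pi)>Mf_{d,r})\le e^{-cM^rf_{d,r}^r}$ for a single path \emph{without} the destroying prefactor.

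\textbf{The single-path Cram\'er bound cannot see the $\log\log N$ corrections, and in fact proves too much.} For $1\le r<d$ your scheme gives $\P(A\text{ fails})\lesssim e^{-cM^rL^{d-r}}$ and, taking $L\asymp M$, an exponent $\asymp M^d$ uniformly over $d-1\le r<d$. That yields $M\lesssim(\log N)^{1/d}$ at $r=d-1$ — but this directly contradicts Theorem~\ref{thm3}, whose matching lower bound forces $M\ge c(\log N)^{1/d}(\log\log N)^{(d-2)/d}$. So this chain of reasoning is not merely imprecise at the borderline; it must contain a genuine error. The culprit is that a single bypass path of length $O(L)$, even repeated over $L^{d-1}$ disjoint copies, is not the right object: the paper needs a tail bound for the \emph{restricted first-passage time} over a $(d-1)$-dimensional face of side $k$, which is an infimum over exponentially many paths. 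Proposition~\ref{prop LDP} supplies this via a hierarchical (GREM-type) decomposition borrowed from \cite{CGM}, and the super-exponential rate function $g(r,d-1,L,k)$ depends on the face size $k$ in a way that, when summed over the $L$ concentric shells $k=1,\ldots,L$, produces precisely the $\log\log N$ refinements at $r=d-1$ and $r=d$ (the harmonic sum $\sum_k 1/k\asymp\log L$ for $r=d$; the $(\log L)^{d-2}$ factor for $r=d-1$). These corrections are not a ``next order'' term of a Cram\'er estimate for a fixed path — no choice of $L$ in your framework produces them.

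\textbf{The geometry of the avoidance event needs a shell, not a family of fixed bypasses.} Lemma~\ref{avoid on A} reroutes the geodesic between its first and last crossing of a closed $(d-1)$-dimensional shell $C_k^{(e)}$ enclosing $e$, using a cheap path \emph{inside the shell}; this works because any path through $e$ must cross the shell, wherever it enters and exits. A fixed family of $L^{d-1}$ edge-disjoint bypass paths does not serve the same purpose: the geodesic's actual crossing points will generically not be endpoints of any of your prescribed bypasses, so a cheap bypass does not by itself imply rerouting. The paper's shell event implicitly performs a union bound over all $O(k^{2(d-1)})$ pairs of crossing points within each shell, absorbed by the super-exponential bound; your construction would need an analogous uniformity, at which point you are back to estimating the shell-restricted first-passage time rather than a single path, i.e.\ to Proposition~\ref{prop LDP}.
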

\begin{thm}\label{thm2}
  Let $d\ge 2$. Suppose that $\E\tau_e^2<\infty$ and $\P(\tau_e=0)<p_c(d).$ Then, there exists a positive constant $C$ such that 
$$\lim_{N\to\infty}\P\left(\sup_{\gamma\in\text{$\O_N$}}\mathcal{M}(\gamma)\leq{}C{}{\rm f}_{d,0}(N)\right)=1.$$
\end{thm}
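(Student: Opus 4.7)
The plan is to follow closely the strategy used for Theorem~\ref{thm1}, replacing the exponential-concentration estimates on detour passage times — which are unavailable here — by a Chebyshev-type bound permitted by the second-moment assumption. The main tool remains Lemma~\ref{avoid on A}: to show that no geodesic from $0$ to $N\mathbf{e}_1$ uses an edge of weight exceeding a threshold $T_N$, it suffices to produce, for every such heavy edge $e$, a bypass path around $e$ whose passage time is strictly smaller than $\tau_e$. I would set $T_N = C(\log N)/\log\log N$ for a large constant $C$ to be fixed later, and show that the event where this detour property fails for the set $A_N = \{e \in B_N : \tau_e \ge T_N\}$ has probability $o(1)$, where $B_N$ is a polynomial-size box in which all geodesics are contained with high probability.

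The first step is geometric confinement. Standard arguments under $\E\tau_e^2 < \infty$ give $t(0,N\mathbf{e}_1) \le (\mu+1)N$ with high probability, from which a straight-corridor comparison shows that every geodesic from $0$ to $N\mathbf{e}_1$ lies in some box $B_N$ of polynomial volume; in particular the number of candidate edges is $|B_N| = O(N^d)$. This reduces the theorem to a union bound over edges in $B_N$.

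The heart of the proof is the detour construction. For each candidate edge $e \in B_N$, I construct a family $\gamma_e^{(1)},\dots,\gamma_e^{(M)}$ of pairwise edge-disjoint paths joining the endpoints of $e$, each avoiding $e$, each of combinatorial length at most $L = C'T_N$, and all contained in $B_N$. In $\Z^d$ one may pack $M$ up to order $L^{d-1}$ such detours; in the critical case $d=2$ one nests rectangular bypasses of sizes $O(1), O(2), \dots, O(T_N)$ around $e$, which dictates the choice $M \asymp T_N$. Edge-disjointness and the i.i.d.\ assumption make the passage times $t(\gamma_e^{(i)})$ mutually independent and independent of $\tau_e$, each with mean at most $L\E\tau_e$ and variance at most $L\,\mathrm{Var}(\tau_e)$. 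Chebyshev's inequality gives
\begin{equation*}
\P\bigl(t(\gamma_e^{(i)}) \ge T_N\bigr) \le \frac{L\,\mathrm{Var}(\tau_e)}{(T_N - L\E\tau_e)^2} \le \frac{C''}{T_N},
\end{equation*}
as soon as $C$ is chosen larger than $2C'\E\tau_e$. The event ``$e\in A_N$ and no detour beats $\tau_e$'' implies that all $M$ detours around $e$ have passage time $\ge T_N$, so independence yields
\begin{equation*}
\P\bigl(\exists\, e \in A_N \text{ for which no detour beats }\tau_e\bigr) \le |B_N|\,(C''/T_N)^M.
\end{equation*}
Taking $M = \lceil K T_N\rceil$ and using $T_N\log T_N = C(1+o(1))\log N$ from the definition of $T_N$, the right-hand side equals $\exp\bigl(d\log N - KC(1+o(1))\log N\bigr)$, which tends to zero for $KC>d$. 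Lemma~\ref{avoid on A} then yields the conclusion.

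The main obstacle is the detour packing in dimension two, where one cannot fit more than $\asymp L$ edge-disjoint detours of length $L$ around a single edge; the Chebyshev decay $1/T_N$ per detour must therefore be raised to the $T_N$-th power, forcing the balance $T_N\log T_N = \Theta(\log N)$ that is exactly responsible for the $\log\log N$ correction in $f_{d,0}$. Beyond this, one must verify the hypotheses of Lemma~\ref{avoid on A} for the deterministic detour families chosen above, and confirm that the constants $C, C', K$ can be fixed uniformly in $N$ despite the weak tail assumption; both are straightforward once the order of constants has been arranged as above.
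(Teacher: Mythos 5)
Your overall architecture (threshold $T_N\asymp \log N/\log\log N$, roughly $T_N$ independent detour attempts each failing with Chebyshev probability $O(1/T_N)$, and the balance $T_N\log T_N\asymp\log N$ against the $N^d$ union bound) is exactly the mechanism in the paper. But the central combinatorial claim — that one can pack $M\asymp T_N$ pairwise edge-disjoint paths, each joining the two endpoints $v,w$ of $e$ and avoiding $e$ — is false. The edge $e$ aside, the vertex $v$ has only $2d-1$ remaining incident edges, so by Menger's theorem there are at most $2d-1$ pairwise edge-disjoint $v$--$w$ paths in $\Z^d\setminus\{e\}$ (three in $d=2$). Your ``nested rectangular bypasses'' are disjoint as annuli, but to turn each annulus into a $v$--$w$ path you must attach connecting segments from $v$ and $w$ out to radius $k$, and those segments overlap across different $k$, destroying both disjointness and independence. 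With only $O(1)$ genuinely independent detours, Chebyshev yields a failure probability of order $T_N^{-(2d-1)}$ per edge, which cannot beat the union bound over $O(N^d)$ edges.

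The paper's proof repairs exactly this point: it works with the disjoint annular shells $C_k^{(e)}$, $1\le k\le Mf_{d,0}(N)$, and the ``good'' event requires that \emph{within a single shell} all pairs of vertices are joined cheaply. The detour is then anchored not at $v$ and $w$ but at the geodesic's own first and last intersection points with the shell (Lemma \ref{avoid on A}), so no radial connecting segments are needed and the $\asymp f_{d,0}(N)$ shells give genuinely independent trials; within each shell one still uses $2(d-1)$ disjoint paths and Chebyshev, exactly in the spirit of your per-detour estimate. If you replace your endpoint-to-endpoint detours by this shell rerouting, the rest of your argument (confinement to a polynomial box, the union bound, and the choice of constants) goes through as you describe.
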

Next we move on to the lower bound. Let us restrict our attention to the following class of distributions, called useful  distributions, which is introduced in \cite{BK93}.
\begin{Def}\label{def:useful}
 Let $\underline{\tau}$ be the infimum of the support of a random variable $\tau_e$. We say that the distribution $F$ is useful if either holds:
$$(1)~\underline{\tau}=0\text{ and }\P(\tau_e=0)<p_c(d)\text{ or }(2)~\underline{\tau}>0\text{ and }\P(\tau_e=\underline{\tau})<\vec{p}_c(d),$$
where $p_c(d)$ and $\vec{p}_c(d)$ stand for the critical probabilities of $d$-dimensional percolation and oriented percolation model, respectively.
\end{Def}
 Note that if $F$ is continuous, then it is also useful. This restriction assures that a typical optimal path never goes far away taking only the minimum value of the distribution.
\begin{thm}\label{thm3}
  Let $d\ge 2$. Suppose that the following three hold:
  \begin{enumerate}
  \item $F$ is useful,
  \item for any positive integer $m$, $\E[\tau^m_e]<\infty$,
  \item there exist $r\in(0,\infty)$, $\beta,\alpha,\rho>0$ and $\kappa>1$ such that for any $t>\rho$, $$\P(t<\tau_e<\kappa{}t)\geq{}\alpha{}e^{-\beta{}t^r}.$$
  \end{enumerate}
  Then, there exists a positive constant $c$ such that
$$\lim_{N\to\infty}\P\left(\inf_{\gamma\in\text{$\O_N$}}\mathcal{M}(\gamma)\geq{}c{\rm f}_{d,r}(N)\right)=1.$$
\end{thm}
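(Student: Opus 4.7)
The strategy is to construct, along the axis from $0$ to $N\mathbf{e}_1$, a sequence of $K\asymp N/L$ disjoint ``trap boxes'' $B_1,\ldots,B_K$ of side length $L=L(N,d,r)$, together with events $A_k$ measurable in the edges of $B_k$, such that: (T1) on $A_k$, every path crossing $B_k$ from one transverse face to the opposite face must traverse at least one edge of weight $\ge M:=cf_{d,r}(N)$; and (T2) on $A_k$, some crossing of $B_k$ has total weight at most $\mu L+O(M)$. Because the $A_k$ depend on disjoint edge sets they are independent, and if $L$ and $M$ are tuned so that $(N/L)\P(A_1)\to\infty$ a direct independence argument yields $\P(\bigcup_k A_k)\to 1$. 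Combined with (T1)--(T2) this forces every optimal path to contain an edge of weight at least $M$, provided one further shows that some optimal path actually crosses a trapped box.

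\noindent\textbf{Choice of $A_k$ and the four regimes.} The event $A_k$ prescribes an appropriate geometric structure of ``large'' edges inside $B_k$: for large $r$ this structure is a full cross-sectional wall of $L^{d-1}$ edges with weights in the window $[M,\gamma M]$, while for small $r$ it degenerates to a single bottleneck edge surrounded by a nested cascade of caged edges designed to block all short detours. In either case hypothesis~(3) yields a lower bound of the form $\beta^n e^{-\alpha n M^r}$ for the probability of the prescription, where $n$ is the number of prescribed edges. The complementary demand (T2) --- that the remaining edges of $B_k$ behave typically --- is granted with probability bounded away from $0$ by hypothesis~(2) together with standard passage-time concentration across boxes of side $L$. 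Balancing $\log(1/\P(A_k))\asymp\log(N/L)$ under these constraints and optimising the geometry reproduces $f_{d,r}(N)$: the wall strategy with $L$ diverging yields $(\log N)^{1/d}$ for $d-1<r<d$; the bottleneck-cascade strategy yields $(\log N)^{1/(r+1)}$ for $r<d-1$; the boundary cases $r=d-1$ and $r=d$ produce the announced double-logarithmic corrections; and for $r>d$ one recovers the single-edge maximum $(\log N)^{1/r}$.

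\noindent\textbf{Forcing a crossing and main obstacle.} An optimal path might a priori detour around every trap, so (T1)--(T2) alone do not suffice. To bridge this gap I would adapt the configuration-flipping argument of van den Berg and Kesten~\cite{BK}: starting from an environment on which $A_k$ holds, one resamples the edges of a carefully chosen cage surrounding $B_k$ so as to force any bypass of $B_k$ to cost strictly more than the cheap crossing guaranteed by (T2), while keeping the effect on $t(0,N\mathbf{e}_1)$ small. The usefulness hypothesis (Definition~\ref{def:useful}) is essential at this step: depending on whether $F^-=0$ or $F^->0$ it supplies an infinite (respectively oriented infinite) cluster of small-weight edges out of which to build the controlled detours needed for the flip. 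The main obstacle is precisely this flip, which must simultaneously preserve $A_k$, render every bypass of $B_k$ strictly more expensive than the crossing, and succeed with probability bounded below uniformly in $N$; and it must do so separately in each of the three geometric regimes $r<d-1$, $r=d-1$, $r>d-1$ that the introduction identifies. This case-dependent surgery, rather than the probabilistic counting of the first step, is the principal technical difficulty.
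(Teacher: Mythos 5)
There is a genuine gap, and it lies exactly where you place the ``main obstacle'': the mechanism you propose for connecting the existence of a trap to the behaviour of the geodesic does not work, and the device the paper actually uses is absent from your plan. First, your requirement that the configuration flip ``succeed with probability bounded below uniformly in $N$'' is unachievable: planting $n$ edges of weight $\ge cf_{d,r}(N)$ costs $\beta^n e^{-\alpha n (cf_{d,r}(N))^r}$, and the whole point of the calibration of $f_{d,r}$ is that this is $N^{-\delta}$, polynomially small. Second, and more fundamentally, no local resampling of bounded-below probability can \emph{force} the geodesic through a designated box sitting at distance of order $N$ from both endpoints; the route of the geodesic is decided by the global environment. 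The van den Berg--Kesten flip does something logically different: it conditions on the event that the \emph{original} geodesic already crosses the box (which, by a Peierls argument over a full $d$-dimensional tiling by boxes of side $\asymp f_{d,r}(N)$ --- not a one-dimensional string of $N/L$ boxes along the axis, which the geodesic could avoid entirely --- happens for at least $\epsilon N/f_{d,r}(N)$ boxes), and then shows that resampling the interior to the trap configuration strictly lowers $t(0,N\mathbf{e}_1)$, so that the \emph{new} geodesic is forced into the modified edge set $\tilde E^j(l;n)$. By exchangeability of $\tau$ and $\tau^*$ this yields only $\P(B\text{ is good})\ge N^{-\delta}\,\P(B\text{ black and crossed})$, i.e.\ a \emph{first-moment} bound: the expected number of large edges on the geodesic is $\gtrsim N^{1-\delta}/f_{d,r}(N)$.

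Your proposal has no route from this first-moment information (or from ``some trap exists somewhere'') to the high-probability statement of the theorem. The paper closes this gap with a perturbation-plus-concentration argument that you would need to supply: define $\tilde\tau_e=\tau_e+1$ on edges with $\tau_e\ge cf_{d,r}(N)-1$ and $\tilde\tau_e=\tau_e$ otherwise; the first-moment bound gives $\E\tilde t(0,N\mathbf{e}_1)-\E t(0,N\mathbf{e}_1)\ge cN^{1-\delta}$, while Kesten's variance bound $\mathrm{Var}(t)\le CN$ confines both $t$ and $\tilde t$ to windows of width $N^{1/2+o(1)}\ll N^{1-\delta}$ around their means, so $\P(t=\tilde t)\to 0$; and $t=\tilde t$ is exactly what happens when some optimal path has all weights below $cf_{d,r}(N)-1$. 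Two smaller points: the cheap-crossing comparison inside a box is made against $F^-$ (the infimum of the support, via the usefulness hypothesis and the bound $t(v,w)\ge (F^-+\delta_7)|v-w|_1$), not against the time constant $\mu$; and your identification of the regime-dependent trap geometries and of the role of usefulness is consistent with the paper's conditions $A_1$--$A_3$, so the local part of your plan is sound once embedded in the correct global architecture.
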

\begin{remark}\label{rem1}
  Let us comment how to deduce the lower bound in \eqref{illustrate} under the assumption $\P(\tau_e\geq t)=\exp{(-t^{r+o(1)})}$. We now consider a weaker assumption that $\kappa$ depends on $t$ such that $\kappa=t^{\delta}$ with $\delta>0$. Essentially the same proof yields the following result: for any $\e>0$, there exists $\delta>0$ such that under the above conditions (i)--(iii) with $\kappa=t^{\delta}$,
  $$\lim_{N\to\infty} \P\left( \inf_{\gamma\in\O_N}\mathcal{M}(\gamma)\geq c{\rm f}_{d,r}(N)^{1-\e}\right)=1.$$
  Note that if $\P(\tau_e\geq t)=\exp{(-t^{r+o(1)})},$ then it is easy to check
  that for any $\delta>0$, $$\P(t\leq \tau_e\leq t^{1+\delta})\geq e^{-t^{r+o(1)}},$$  and that the other conditions hold. Therefore, the lower bound of \eqref{illustrate} follows.
  \end{remark}
\begin{thm}\label{thm4}
  Let $d\ge 2$. Suppose that the following three hold:
  \begin{enumerate}
  \item  $F$ is useful,
    \item  $\E[\tau^{2}_e]<\infty$, 
    \item there exist $\beta,\alpha,\rho>0$ and $\kappa>1$ such that for any $t>\rho$, $$\P(t<\tau_e<\kappa{}t)\geq{} \alpha t^{-\beta}.$$
  \end{enumerate}
  Then, there exists a positive constant $c$ such that
$$\lim_{N\to\infty}\P\left(\inf_{\gamma\in\text{$\O_N$}}\mathcal{M}(\gamma)\geq{}c{\rm f}_{d,0}(N)\right)=1.$$
\end{thm}
\begin{remark}\label{rem:1}
 Given two sets $A,B$, we define ${\rm T}(A,B)=\inf_{x\in A,\,y\in B}{\rm T}(A,B)$ and denote by $\O(A,B)$ the set of corresponding optimal paths. If we consider the Box-to-Box first passage time $\rmT(D_{L_N}(0),D_{L_N}(N\mathbf{e}_1))$  instead of $\rmT(0,N\mathbf{e}_1)$ where $D_{L_N}(x)=x+[-L_N,L_N]^d$ for $x\in\R^d$ and $L_N$ to be specified below,
  and the maximal weight of corresponding optimal paths,  then the above four results hold not only in probability, but with probability one. More precisely, the following results hold:
\begin{prop}\label{prop1'}
 Let $L_N=\log{N}$. Under the condition of Theorem \ref{thm1}, the following happens with probability one: there exists a positive constant $C$ such that for any $N\in\N$, 
 \begin{equation}\label{upper-prop1}
   \sup_{\gamma\in\O(D_{L_N}(0),D_{L_N}(N\mathbf{e}_1))}\mathcal{M}(\gamma)\leq{}C{}{\rm f}_{d,r}(N).
   \end{equation}
If only we assume the condition of Theorem \ref{thm2}, then \eqref{upper-prop1} holds with $r=0$.
\end{prop}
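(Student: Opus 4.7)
The strategy is to upgrade the in-probability estimates of Theorems~\ref{thm1} and~\ref{thm2} to almost-sure statements for the box-to-box quantity via the Borel--Cantelli lemma. Setting
\begin{equation*}
E_N=\Bigl\{\max_{\pi_N\in{\rm Opt}(D(0),D(N\mathbf{e}_1))}\mathcal{M}(\pi_N)>Cf_{d,r}(N)\Bigr\},
\end{equation*}
it suffices to prove $\sum_{N\ge 1}\P(E_N)<\infty$: then \eqref{upper-prop1} holds for all large $N$ almost surely, and the finitely many remaining $N$ are absorbed by enlarging $C$, since each $\mathcal{M}(\pi_N)$ is a.s.\ finite.

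The first step is to revisit the proof of Theorem~\ref{thm1} and extract a quantitative version. Its engine is the detour construction of Lemma~\ref{avoid on A}: the event that an optimal path carries an edge $e$ with $\tau_e\ge Cf_{d,r}(N)$ forces, in a suitable neighbourhood of $e$, a large family of disjoint candidate detours to all have passage time exceeding $\tau_e$. Using \eqref{UB condition} and the approximate independence of these detours, the plan is to obtain, for any preassigned $K>0$, a bound
\begin{equation*}
\P\Bigl(\max_{\Gamma\in{\rm Opt}(u,v)}\mathcal{M}(\Gamma)>Cf_{d,r}(N)\Bigr)\le N^{-K},
\end{equation*}
uniformly in $u,v\in\Z^d$ with $|u-v|_1\asymp N$, provided $C=C(K,d,r)$ is large enough. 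A union bound over the $O(L_N^{2d})=O((\log N)^{2d})$ endpoint pairs contributing to $E_N$ then yields $\P(E_N)\lesssim (\log N)^{2d}N^{-K}$, which is summable for $K>1$.

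The main obstacle lies in the intermediate regime $0<r\le d$, where $f_{d,r}(N)^r\ll\log N$, so that the naive single-edge tail $c\exp(-bC^r f_{d,r}(N)^r)$ is by itself not summable in $N$. Summability must therefore come from aggregating approximately independent failures across many disjoint local neighbourhoods of the would-be maximal edge: the count of such neighbourhoods scales as a suitable power of $f_{d,r}(N)$, and the product of the corresponding failure probabilities delivers the polynomial-in-$N$ decay needed. Tracking this multi-detour bookkeeping quantitatively is the delicate step, but the mechanism is already implicit in the proof of Theorem~\ref{thm1}; only the counting has to be sharpened.

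For the weaker assumption $\E\tau_e^2<\infty$ of Theorem~\ref{thm2} ($r=0$), stretched-exponential inputs on a single edge are unavailable and are replaced by a truncation argument in the spirit of \cite{Zhang2010}: truncate each $\tau_e$ at a slowly growing threshold, absorb the discarded mass into a summable Chebyshev error, and run the preceding scheme on the truncated variables. Combined with the Borel--Cantelli step above, this finishes both halves of the proposition.
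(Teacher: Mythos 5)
Your overall strategy (show $\sum_N\P(E_N)<\infty$ and apply Borel--Cantelli) is the right one and matches the paper's Remark~\ref{upp-rem}. However, the intermediate step you rely on is genuinely unobtainable, and the fix you sketch does not address the real bottleneck.

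You want, for each pair $(u,v)$ with $|u-v|_1\asymp N$, a bound $\P\bigl(\max_{\Gamma\in{\rm Opt}(u,v)}\mathcal{M}(\Gamma)>Cf_{d,r}(N)\bigr)\le N^{-K}$, and then a union bound over the $O((\log N)^{2d})$ endpoint pairs. But for $0<r\le d$ this per-pair probability cannot be made polynomially small, no matter how large $C$ is. Every point-to-point optimal path from $u$ must use one of the $2d$ edges incident to $u$; on the event that all $2d$ of these edges have weight exceeding $Cf_{d,r}(N)$, necessarily $\max_{\Gamma\in{\rm Opt}(u,v)}\mathcal{M}(\Gamma)>Cf_{d,r}(N)$. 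For a distribution that also saturates the hypothesis (e.g.\ the Weibull $\P(\tau_e>t)\asymp e^{-bt^r}$, which satisfies both Theorem~\ref{thm1} and Theorem~\ref{thm3}), this event has probability of order $\exp(-2db\,C^r f_{d,r}(N)^r)$; since $f_{d,r}(N)^r\ll\log N$ throughout the range $r\le d$, this decays slower than any power of $N$. The same objection applies in the $r=0$ case, where Chebyshev gives $\P(\tau_e>t)\lesssim t^{-2}$ and the $2d$-incident-edge event has probability $\gtrsim f_{d,0}(N)^{-4d}$, still only polylogarithmically small. Your proposed remedy — ``aggregating approximately independent failures across many disjoint local neighbourhoods of the would-be maximal edge'' — is exactly the mechanism behind the estimate $\P(e\text{ is not good})\le N^{-2d}$ in the paper's proof of Theorem~\ref{thm1} (products over the annuli $C_k^{(e)}$), and it is already fully exploited there. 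It simply does not help at the endpoint $u$: there is no detour annulus that avoids the $2d$ edges touching a fixed starting vertex.

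The point the proposal misses is that the box-to-box structure is what removes this obstruction, not a sharper count. In the paper's proof of Theorem~\ref{thm1}, the polylogarithmically rare event is isolated as $A_3$ (respectively $A_3'$), which controls the weights near the fixed endpoints $0$ and $N\mathbf{e}_1$; the other two events satisfy $\P((A_1\cap A_2)^c)\lesssim N^{-d}$, which is summable since $d\ge 2$. For the box-to-box passage time with $L_N=\log N$, the starting and ending vertices are free to move within $D(0)$ and $D(N\mathbf{e}_1)$, and since $f_{d,r}(N)\le L_N$, the detour annulus $C_k^{(e)}$ (with $k\le f_{d,r}(N)$) always meets the box even when the heavy edge $e$ is adjacent to the boundary of $D(0)$. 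Thus Lemma~\ref{avoid on A} holds on $A_1\cap A_2$ alone, $A_3$ can be dropped entirely, and Borel--Cantelli applies directly. That is the content of Remark~\ref{upp-rem}; without this observation the union-bound scheme you describe does not close.
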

\begin{prop}\label{prop3'}
  Take a positive constant $\rho$ and set $L_N=(\log{N})^{1+\rho}$. Under the condition of Theorem \ref{thm3}, the following happens with probability one: there exists a positive constant $c$ such that for any $N\in\N$ large enough,
 \begin{equation}\label{upper-prop2}
   \inf_{\gamma\in\O(D_{L_N}(0),D_{L_N}(N\mathbf{e}_1))}\mathcal{M}(\gamma)\geq{}c{}{\rm f}_{d,r}(N).
   \end{equation}
 If we assume the condition of Theorem \ref{thm4} instead,  then \eqref{upper-prop2} holds with $r=0$.
\end{prop}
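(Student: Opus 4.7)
The plan is to upgrade the ``in probability'' statements of Theorems \ref{thm3} and \ref{thm4} to almost sure statements via a Borel--Cantelli argument, exploiting that the box-to-box setting splits the problem into polynomially many independent pieces along the tube containing the geodesic. It suffices to verify \eqref{upper-prop2} along the subsequence $N_k = 2^k$ with summable failure probabilities, since monotonicity in $N$ and slow variation of $f_{d,r}$ will extend the bound to all $N$.

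First I would invoke the transversal-fluctuation estimates of \cite{Zhang2010}, in essentially the form used for Proposition \ref{prop1'}, to produce an event $\mathcal{L}_N$ of probability at least $1-N^{-K}$ (for any prescribed $K$) on which every $\pi_N \in {\rm Opt}(D(0), D(N\mathbf{e}_1))$ is contained in the tube $T_N := [-L_N, N+L_N] \times [-CL_N, CL_N]^{d-1}$ and satisfies $t(\pi_N) \leq (\mu + \e)N$. The polylogarithmic choice $L_N = (\log N)^{1+\eta}$, as opposed to the merely logarithmic choice in Proposition \ref{prop1'}, is precisely what lets $\P(\mathcal{L}_N^c)$ beat every polynomial rate and hence be summable along $N_k$.

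Next, partition $T_N$ along $\mathbf{e}_1$ into $K_N \asymp N/L_N$ disjoint slabs $S_j$ of dimension $\Theta(L_N)$ in every coordinate, and attach to each $S_j$ a ``forcing event'' $G_j$, measurable with respect to $\{\tau_e : e \subset S_j\}$, with (i) $\inf_j \P(G_j) \geq p_N$, where $p_N = N^{-o(1)}$, and (ii) on $\mathcal{L}_N \cap G_j$, every $\pi_N \in {\rm Opt}(D(0), D(N\mathbf{e}_1))$ uses an edge $e \subset S_j$ with $\tau_e \geq c f_{d,r}(N)$. The event $G_j$ is built as a localization of the arguments underlying Theorem~\ref{thm3} (Propositions \ref{crucial-prop}, \ref{crucial-prop2}, \ref{crucial-prop3}): plant inside $S_j$ an edge $e^\ast$ with $\tau_{e^\ast} \in [cf_{d,r}(N), \gamma c f_{d,r}(N)]$, and use the usefulness of $F$ together with the configuration-flipping of \cite{BK} to surround $e^\ast$ by a cheap neighbourhood in which every crossing of $S_j$ of cost at most $(\mu+\e)L_N$ must pass through $e^\ast$; under the hypotheses of Theorem \ref{thm4} the Weibull-tail window is replaced by the polynomial-tail window $[cf_{d,0}(N), \gamma c f_{d,0}(N)]$. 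The lower-tail assumption on $F$ yields
\[
\P(G_j) \;\geq\; L_N^d \cdot \beta\,e^{-\alpha(cf_{d,r}(N))^r} \cdot q_0 \;=\; N^{-o(1)},
\]
where $q_0>0$ comes from the flipping; the definition of $f_{d,r}$ in \eqref{order} is tuned precisely so that this bound holds. Since disjoint slabs use disjoint edges, the $G_j$ are independent and
\[
\P\Bigl(\bigcap_{j} G_j^c\Bigr) \;\leq\; (1-p_N)^{K_N} \;\leq\; e^{-p_N K_N},
\]
with $p_N K_N = N^{1-o(1)}$. Combined with $\mathcal{L}_N$, the failure probability of \eqref{upper-prop2} at $N_k$ is summable, and Borel--Cantelli concludes.

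The main obstacle is verifying property~(ii) for $G_j$: one has to adapt the global argument forcing a heavy edge of scale $f_{d,r}(N)$ so that it operates only with the \emph{local} crossing budget $\mu L_N$ of a single slab rather than the global budget $\mu N$. This is feasible because any relevant detour around $e^\ast$ has combinatorial length $O(f_{d,r}(N))$, which is polylogarithmic in $N$ and hence fits comfortably inside a slab of width $L_N = (\log N)^{1+\eta}$, and because the usefulness of $F$ together with the high-moment assumptions provides cheap corridors near $e^\ast$ with probability bounded below by a constant; tuning the constant $c$ small enough then balances the construction against the local budget and makes the forcing go through.
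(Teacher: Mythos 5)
The paper's own proof (Remark \ref{low-rem}) is much shorter than what you propose: it keeps the expectation bound \eqref{mean} exactly as in the point-to-point case (the resampling argument of Propositions \ref{crucial-prop}--\ref{crucial-prop3} and the Peierls-type Lemma \ref{lem:p} go through verbatim for ${\rm Opt}(D(0),D(N\mathbf{e}_1))$), and only upgrades the concentration step: with $L_N=(\log N)^{1+\eta}$, Theorem 2 of \cite{Zhang2010} gives $\P(|t(D(0),D(N\mathbf{e}_1))-\E t|>N^{1-2\delta})\le N^{-m}$ for every $m$, so $\P(t=\tilde t)\le 2N^{-m}$ is summable and Borel--Cantelli applies. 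The choice of $L_N$ is there to make Zhang's concentration theorem applicable, not to create independence across slabs.

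Your route has a genuine gap at the heart of the construction, namely in reconciling properties (i) and (ii) of $G_j$. To force \emph{every} geodesic crossing of a slab whose transversal extent is $\Theta(L_N)$ to use a heavy edge, the planted configuration must block the entire $(d-1)$-dimensional cross-section of the tube; this requires conditioning at least $\Omega(L_N)$ edges to be atypically cheap (or expensive), each with probability bounded away from $1$, so that in fact $\P(G_j)\le q^{cL_N}=N^{-c(\log N)^{\eta}}$, not $N^{-o(1)}$. Then $K_N\,\P(G_j)\le N\cdot N^{-c(\log N)^{\eta}}\to 0$, i.e.\ with high probability \emph{no} slab realizes its forcing event, and the Borel--Cantelli sum diverges in the wrong direction. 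Your probability estimate $L_N^{d}\cdot\beta e^{-\alpha(cf_{d,r}(N))^{r}}\cdot q_0$ implicitly assumes the planted structure lives in a box of side $\asymp f_{d,r}(N)$ (as in the paper, where $n=[sf_{d,r}(N)]$ and $|\tilde C^j(l;n)|\le C(s)f_{d,r}(N)$); but such a small box cannot locally guarantee that the global geodesic visits it, and whether it does is a global event of small probability. This is precisely the obstruction the paper's argument is built to avoid: it only proves that \emph{crossed} black boxes are good with probability $N^{-\delta}$ (via resampling, which destroys independence), deduces the expectation bound \eqref{mean}, and converts that to a probability statement through the $\tilde\tau$-perturbation plus concentration. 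A secondary problem is your use of the per-slab budget: $t(\pi_N)\le(\mu+\e)N$ does not imply that the restriction of $\pi_N$ to a single slab costs at most $(\mu+\e)L_N$, so even the qualitative form of property (ii) as you state it is not available; the paper instead compares the planted corridor against the $(F^-+\delta_7)|v-w|_1$ lower bound from blackness, which needs no a priori budget.
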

 Remark~\ref{upp-rem} and~\ref{low-rem} explain the necessary modifications to show the above results.
\end{remark}
\begin{remark}
 In order to get the exact order of the growth of the maximal weight, in general, we need the assumption of the distribution such as $\P(\tau_e\geq t)\sim \exp\{-f(t)\}$, where $f(t)$ is regularly varying function, as we do for an extreme value problem of independent and identical distributed random variables. Therefore, our assumption on distributions is natural one.
  \end{remark}
\subsection{Notation and terminology}\label{section: notation}
This subsection introduces useful notations and terminologies  used in the proofs.
\begin{itemize}
    \item Given $a<b$, we write $\Iintv{a, b}=[a,b]\cap \Z.$
 \item Given a finite set $A$, we denote the cardinality of $A$ by  $\sharp A$.
  \item Given a path $\gamma=(x_0,\cdots,x_l)$, we define the length of $\gamma$ as  $\sharp \gamma=l$.
 \item Given two vertices $v,w\in\Z^d$ and a subset $D\subset\Z^d$, we set the {\it restricted} first passage time as
$$\rmT_D(v,w)=\inf_{\gamma\subset D}\rmT(\gamma),$$
 where the infimum is taken over all paths $\gamma\subset D$ from $v$ to $w$. If such a path does not exist, then we set the infinity instead. 
\item We use $c,$ $c_i$, $C$ and $C_i$ with $i\in \N$ for positive constants. They may change from line to line. Typically, $c$ and $c_i$ are used for  small constants and $C$ and $C_i$ for large ones.
\item The symbol $\lf\cdot\rf$ is a floor function, i.e.  $\lf x\rf$ is the greatest integer less than or equal to $x$.
\item It is useful to extend the definition to measure the $p$-norm between two sets as
  $${\rm d}_p(A,B)=\inf\{|x-y|_p|~x\in A,~y\in B\},\text{\hspace{4mm} $A,B\subset \R^d$}.$$
  When $A=\{x\}$, we simply write ${\rm d}_p(x,B)$.  We only use $p=1$ or $p=\infty$ in this article.
\item Given a set $D$ of $\Z^d$, let us define the inner boundary of $D$ as
  $$\partial{}D=\{v\in D|~\exists w\notin D\text{  s.t. }|v-w|_1=1\}.$$
  We define the interior of $D$ by $\iota(D)=D\backslash \partial D$.
  \item Given a set $D\subset \Z^d$ and $x,y\in D$, we write $x\sim_D y$ if there exists a path from $a$ to $b$ which lies only on $D$. Let us denote the connected component of $D$ containing of $x$ as ${\rm Conn}(x,D)=\{y\in D|~x\sim_D y\}.$
\end{itemize}
\subsection{Heuristics and Reader's guide}
\quad

For the proof of the upper bound, to each edge, we will consider a condition, where if the edge has a large weight and a path passes through the edge, then one can make a detour from this path to get a smaller passage time. We will check that all edges related to optimal paths satisfy this condition with high probability, and thus optimal paths do not have too large weights. This condition appears in Lemma~\ref{avoid on A}. The easiest case for the upper bound is $r= 1$ and proved in Section~\ref{section r=1 upper}.\\

 For the proof of the lower bound, we will use the resampling argument introduced in \cite{BK93}. We resample the local configurations to make the optimal paths pass through this region and take a sufficiently large weight. (See the conditions $\kE_{\eqref{kE1}}$--$\kE_{\eqref{kE3}}$ in Definitions~\ref{Def:A1}, \ref{Def:A2}, \ref{Def:A3} and Propositions \ref{crucial-prop}, \ref{crucial-prop2}, \ref{crucial-prop3}.) It needs the detailed information of optimal paths near the maximal edge, which heavily depends on the tail of distributions. The easiest case for the lower bound is $0<r<d-1$ and proved in Section \ref{mean to prob} and \ref{low:r<d-1}.

\section{Proof for the upper bound}
\subsection{General argument for upper bound}\label{r leq 1: upper}
Given an edge $e=\langle v,w\rangle$, we define $v_e\in \{v,w\}$ such that $|v_e|_1=\min\{|v|_1,|w|_1\}$ (such $v_e$ is uniquely determined) and  denote the $k$--th boundary and the set of its edges by ${\rm C}^{(e)}_{k}$ and $\tilde{\rmC}^{(e)}_{k}$, respectively (See Figure \ref{fig:one}):
\al{
{\rm C}^{(e)}_{k}&=\{z\in\Z^d:~|v_e-z|_{\infty}=k\},\\
\tilde{\rm C}^{(e)}_k&=\{\langle{}x,y\rangle:x,y\in{}{\rm C}^{(e)}_k~\text{ and }|x-y|_1=1\}.
}
Note that if $k\neq{}k'$, then  $\tilde{\rm C}^{(e)}_k\cap{}\tilde{\rm C}^{(e)}_{k'}=\emptyset$ and thus  $\{\tau_e\}_{e\in \tilde{\rm C}^{(e)}_k}$ and $\{\tau_e\}_{e\in \tilde{\rm C}^{(e)}_{k'}}$ are independent. Moreover, each face is the square of  sidelength $2k+1$ and its dimension is $d-1$. Thus there exists $C(d)>0$ which depends only on the dimension $d$ such that
\begin{equation}\label{number of C_k}
  \sharp \{v\in\Z^d|~v\in {\rm C}_k^{(e)}\}\leq C(d)k^{d-1}.
\end{equation}
In fact we can take $C(d)=4^dd$ since $\sharp \{v\in\Z^d|~v\in {\rm C}_k^{(e)}\}\leq 2d(2k+1)^{d-1}$.
\begin{figure}[b]
  \includegraphics[width=4.0cm]{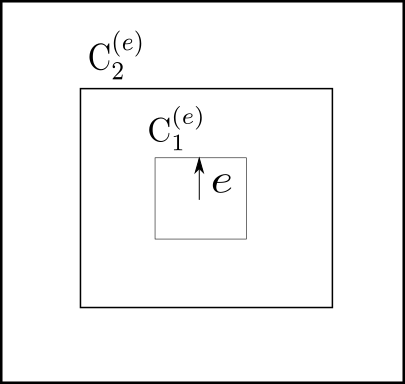}
\hspace{5mm}
  \includegraphics[width=6.0cm]{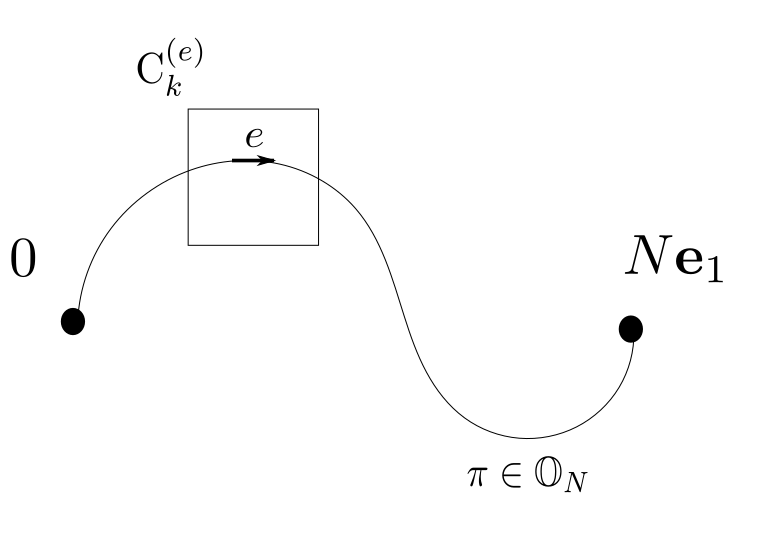}
\caption{}
Left: The figure of ${\rm C}^{(e)}_k$ and $\tilde{\rm C}^{(e)}_k$.\\
Right: We make a better path from the original one.
  \label{fig:one}
\end{figure}
\begin{Def}\label{good-upp}
  We say that $e$ is good if there exists $1\le k\leq{}{\rm f}_{d,r}(N)$ such that for any $v,w\in {\rm C}_k^{(e)}$, 
  \[ \rmT_{{\rm C}_k^{(e)}}(v,w)\leq{}M{\rm f}_{d,r}(N),\]
where $M$ will be chosen later.
\end{Def}
It will be proved in Lemma \ref{avoid on A} that for any path $\gamma$, if $e\in\gamma$ and $\tau_e>M{\rm f}_{d,r}(N)$, then the goodness of $e$ can make $\gamma$ detour with a smaller passage time. 
\begin{Def}\label{Def: B D}
  For $x\in\Z^d$ and $L>0$, we define
  \al{
        D_{L}(x)=(x+[-L,L]^d)\cap\Z^d,
          }
When $x=0$, we simply write $D_L=D_L(0)$.
\end{Def}
We take $K>0$ to be chosen in Lemma \ref{exit-prop}.
\begin{Def}
 We set
 \aln{
  \kE_{\eqref{kE1}}&=\{\forall{}\text{$e\in {\rm E}^d$ with $e\cap D_{KN}\neq{}\emptyset$, $e$ is good}\},\label{kE1}\\
  \kE_{\eqref{kE2}}&=\{\forall \gamma\in\O_N,~\gamma\subset{}D_{KN}\},\label{kE2}\\
  \kE_{\eqref{kE3}}&=\{\text{$\forall e\in {\rm E}^d$ with $e\cap{}(D_{M{\rm f}_{d,r}(N)}(0)\cup D_{M{\rm f}_{d,r}(N)}(N\mathbf{e}_1))\neq{}\emptyset$, }\tau_e\leq{}M{\rm f}_{d,r}(N)\}.\label{kE3}
  }
\end{Def}
We will see that the condition $\kE_{\eqref{kE1}}\cap \kE_{\eqref{kE2}}\cap \kE_{\eqref{kE3}}$ implies that the maximal weight of optimal paths is less than or equal to $M{\rm f}_{d,r(N)}$. 
\begin{lem}\label{avoid on A}
  On the event $\kE_{\eqref{kE1}}\cap \kE_{\eqref{kE2}}\cap \kE_{\eqref{kE3}}$, for any $\gamma\in\O_N$ and $e\in \gamma$,
  $$\tau_e\leq{}M{\rm f}_{d,r}(N).$$
\end{lem}
\begin{proof}
  Let us take $\gamma\in \O_N$ arbitrary and write $\gamma=\{x_0,\cdots,x_{\sharp \gamma}\}$. We fix $e=\langle x_t,x_{t+1} \rangle\in\gamma$ for some $0\le t\le \sharp \gamma-1$. If $e\cap D_{M{\rm f}_{d,r}(N)}(0)\cup D_{M{\rm f}_{d,r}(N)}(N\mathbf{e}_1)\neq \emptyset$, by $\kE_{\eqref{kE3}}$, then $\tau_e\leq M{\rm f}_{d,r}(N)$. Now we suppose that  $e\cap D_{M{\rm f}_{d,r}(N)}(0)\cup D_{M{\rm f}_{d,r}(N)}(N\mathbf{e}_1)= \emptyset.$ By $\kE_{\eqref{kE1}}$ and $\kE_{\eqref{kE2}}$, $e\subset D_{KN}$ and $e$ is good. Thus there exists $k\leq{}{\rm f}_{d,r}(N)$ such that for any $v,w\in {\rm C}_k^{(e)}$, 
  \begin{equation}\label{okkay}
   {\rm T}_{{\rm C}_k^{(e)}}(v,w)\leq{}M{\rm f}_{d,r}(N).
    \end{equation}
  We take such $k$. Let $x_p$ and $x_q$ be the first and last intersecting point between $\gamma$ and ${\rm C}_k^{(e)}$, i.e. $p=\inf\{s\in\{0,\cdots, \sharp \gamma\}|~x_s\in {\rm C}^{(e)}_k\}$ and $q=\sup\{s\in\{0,\cdots, \sharp \gamma\}|~x_s\in {\rm C}^{(e)}_k\}$. Since $e\cap{}(D_{M{\rm f}_{d,r}(N)}(0)\cup D_{M{\rm f}_{d,r}(N)}(N\mathbf{e}_1))={}\emptyset$, the inside of ${\rm C}_k^{(e)}$ contains neither $0$ nor $N\mathbf{e}_1$. Thus we have  $0 \le q<t<p\leq \sharp \gamma$. It follows from \eqref{okkay} that
  $$\tau_e\leq \rmT(x_p,x_q)\leq {\rm T}_{{\rm C}_k^{(e)}}(x_p,x_q)\leq{} M{\rm f}_{d,r}(N).$$ 
\end{proof}
From Lemma~\ref{avoid on A}, if we can prove 
\aln{
  \lim_{N\to\infty}\P(\kE_{\eqref{kE1}}\cap \kE_{\eqref{kE2}}\cap \kE_{\eqref{kE3}})=1,\label{probab converges to 1}
}
then the proof of Theorem~\ref{thm1} is completed.  First we will estimate $\P(\kE_{\eqref{kE2}})$.
\begin{lem}\label{exit-prop}
  Suppose $\E\tau_e^2<\infty$ and $\P(\tau_e=0)<p_c(d)$. Then there exist $C,K>0$ such that for any $N\in\N$, 
  \begin{equation}\label{estimate-A2}
    \P\left(\left(\bigcup_{\gamma\in\O_N}\gamma\right)\cap (D_{KN})^c \neq \emptyset\right)\leq CN^{-2d}.
    \end{equation}
  \end{lem}
\begin{proof}
  From Proposition 5.8 in \cite{Kes86}, there exist $C_1,C_2,C_3>0$ such that for any $\ell>0$,
   \begin{equation}\label{5.8}
     \begin{split}
       \P\left( \exists\text{ self avoiding path $\gamma$ starting at $0$
         with $\sharp \gamma\geq \ell$ and $\rmT(\gamma) < C_1 \ell$}\right) < C_2 \exp{(-C_3 \ell)}.
     \end{split}
     \end{equation}
  We take $K>4\E[\tau_e]/C_1$. Then,
  \begin{equation}\label{fin}
    \begin{split}
      &\P\left(\exists \gamma\in\O_N\text{  s.t. }\sharp \gamma\geq KN\right)\\
      &\leq \P\left(\exists \gamma\in\O_N\text{  s.t. }\sharp \gamma\geq KN\text{ and }\rmT_N< C_1KN\right)+\P\left(\rmT_N\geq C_1KN\right)\\
      &\leq C_2\exp{(-C_3KN)}+\P\left(\rmT_N\geq C_1KN\right),
  \end{split}
  \end{equation}
  where we have used \eqref{5.8} in the second inequality. Now we consider $2d$ disjoint paths $\{\gamma_i\}^{2d}_{i=1}$ from $0$ to $N\mathbf{e}_1$ so that $$\max\{\sharp \gamma_i|~i=1,\cdots,2d\}\leq 2N,$$
  as in \cite[p 135]{Kes86}. Since $\E[\rmT(\gamma_i)]\leq  C_1 K N/2$,
   \begin{equation}\label{fell}
     \begin{split}
       \P\left(\rmT_N\geq C_1KN\right)& \leq \prod^{2d}_{i=1} \P\left(\rmT(\gamma_i)\geq C_1KN\right)\\
       &\leq \prod^{2d}_{i=1} \P\left(|\rmT(\gamma_i)-\E[\rmT(\gamma_i)]|\geq C_1KN/2\right).
  \end{split}
   \end{equation}
By the Chebyshev inequality, this is further bounded from above with some constant $C_4=C_4(d,F,C_1)>0$ by
         \begin{equation}\label{fell2}
     \begin{split}
       &\quad \prod^{2d}_{i=1} \left((C_1KN/2)^{-2}\,2N \E[\tau_e^2]\right)\\
       &\leq C_4K^{-4d}N^{-2d}. 
  \end{split}
   \end{equation}
   Thus we have
    \begin{equation}\label{fell3}
      \begin{split}
        \P\left(\exists \gamma\in\O_N\text{  s.t. }\sharp \gamma\geq KN\right) &\leq  C_2\exp{(-C_3KN)}+C_4K^{-4d}N^{-2d}\\
   &\leq 2C_4K^{-4d}N^{-2d}.
  \end{split}
   \end{equation}
    Since
   $$\left(\bigcup_{\gamma\in\O_N}\gamma\right)\cap{}(D_{KN})^c\neq{}\emptyset \Longrightarrow   \max_{\gamma\in\O_N}\sharp\gamma\geq KN,$$
   we have
    \begin{equation}\label{fel}
      \begin{split}
        \P\left(\bigcup_{\gamma\in\O_N}\gamma\cap{}(D_{KN})^c\neq{}\emptyset\right)\leq 2C_4 K^{-4d} N^{-2d}. 
  \end{split}
    \end{equation}
  \end{proof}
Since the complement of $\kE_{\eqref{kE2}}$ is the event inside the probability in \eqref{estimate-A2},
we have
\begin{equation}\label{estimate-A2'}
  \P(\kE_{\eqref{kE2}}^c)\leq C N^{-2d},
  \end{equation}
which converges to $0$.
Next we will estimate $\P(\kE_{\eqref{kE3}})$. By the union bound, we have
    \begin{equation}\label{estimate-A3}
      \begin{split}
        \P(\kE_{\eqref{kE3}}^c)&=\P(\text{$\exists e\in {\rm E}^d$ s.t. $e\cap{}(D_{M{\rm f}_{d,r}(N)}(0)\cup D_{M{\rm f}_{d,r}(N)}(N\mathbf{e}_1))\neq{}\emptyset$ and }\tau_e>{}M{\rm f}_{d,r}(N))\\
        &\leq 2d\sharp{}(D_{M{\rm f}_{d,r}(N)}(0)\cup D_{M{\rm f}_{d,r}(N)}(N\mathbf{e}_1))\,a\exp{(-b(M{\rm f}_{d,r}(N)^r))}\\
        &\leq 2d( 4d M{\rm f}_{d,r}(N))^{d}\,a \exp{(-b M^r{\rm f}_{d,r}(N)^r)},
        \end{split}
         \end{equation}
    which also converges to $0$.   We will estimate $\P(\kE_{\eqref{kE1}})$ in several cases.

    \subsection{The case $r=1$}\label{section r=1 upper}
First, we  consider the case $r=1$. Then ${\rm f}_{d,r}(N)=\sqrt{\log{N}}$ and there exists $\beta>0$ such that $\E e^{\beta \tau_e}<\infty$. In this case, we take a positive constant $M$  such that

\begin{equation}\label{M-def1}
M>\beta^{-1} 16d^2 \E e^{\beta \tau_e}.
\end{equation}
Given two vertices $v,w\in {\rm C}_k^{(e)}$, we take a path $\gamma^w_v:v\to w$ lying on ${\rm C}_k^{(e)}$ whose length is at most $8d^2{\rm f}_{d,r}(N)$.  We will calculate the probability that $e$ is good.

Fix $v,w\in {\rm C}_k^{(e)}$. Then since we take $M$ sufficiently large as in \eqref{M-def1}, by the exponential Markov inequality, we have
\begin{equation}\label{2.1}
  \begin{split}
    \P\left(\sum_{\eta\in\gamma^w_v}\tau_{\eta}>{}M\sqrt{\log{N}}\right)&\leq{}\exp{(-\beta{}M\sqrt{\log{N}})}\prod_{\eta\in\gamma^w_v}\E{}e^{\beta\tau_{\eta}}\\
    &\leq{}\exp{(-\beta{}M\sqrt{\log{N}})}\left(\E{}e^{\beta\tau_e}\right)^{8d^2\sqrt{\log{N}}}\\
      &\leq{}\exp{\left(-\frac{\beta M\sqrt{\log{N}}}{2}\right)}.
  \end{split}
\end{equation}
   Recall that if $k\neq k'$, then $\{\tau_e\}_{e\in \tilde{\rm C}^{(e)}_k}$ and $\{\tau_e\}_{e\in \tilde{\rm C}^{(e)}_{k'}}$ are independent. It follows that  
\begin{equation}\label{2.4-}
  \begin{split}
    \P\left(e\text{ is not good}\right)&=\P\left( \forall k\leq \sqrt{\log{N}},~\exists v,w\in {\rm C}_k^{(e)}\text{ s.t. }\rmT_{{\rm C}_k^{(e)}}(v,w)>{}M{\rm f}_{d,r}(N)\right)\\
    &= \prod_{k\leq{}\sqrt{\log{N}}}\P\left(\exists v,w\in {\rm C}_k^{(e)}\text{ s.t. }\rmT_{{\rm C}_k^{(e)}}(v,w)>{}M{\rm f}_{d,r}(N)\right).
  \end{split}
\end{equation}
By $\rmT_{{\rm C}_k^{(e)}}(v,w)\leq \sum_{\eta\in\gamma^w_{v}}\tau_{\eta}$, for sufficiently large $N$, this is further bounded from above by 
\begin{equation}\label{2.4}
  \begin{split}
    & \prod_{k\leq{}\sqrt{\log{N}}}\P\left(\text{$\exists v,w\in {\rm C}^{(e)}_{k}$  s.t. $\sum_{\eta\in\gamma^w_{v}}\tau_{\eta}>{}M\sqrt{\log{N}}$}\right)\\
    \le& \prod_{k\leq{}\sqrt{\log{N}}}\left\{C(d)^2(\log{N})^{d-1}\max_{v,w\in {\rm C}_k^{(e)}} \P\left(\sum_{\eta\in\gamma^w_{v}}\tau_{\eta}>{}M\sqrt{\log{N}}\right)\right\}\\
  \le& \prod_{k\leq{}\sqrt{\log{N}}}\left\{C(d)^2(\log{N})^{d-1} \exp{\left(-\frac{\beta}{2}M\sqrt{\log{N}}\right)} \right\}\\
     \leq&{}\left(\exp{\left(-\frac{\beta}{4}M\sqrt{\log{N}}\right)}\right)^{\lf \sqrt{\log{N}}\rf}
    \le N^{-2d},
  \end{split}
\end{equation}
where we have used  \eqref{number of C_k}  and the union bound in the first inequality and \eqref{2.1} in the second inequality. By using the union bound again, we have
\al{
  \P(\kE_{\eqref{kE1}}^c)&\leq 2d(\sharp D_{KN})^2 \P(\text{$e$ is not good}).\\
  &\leq 2d (2KN+1)^d N^{-2d},
}
which converges to $0$ as $N\to\infty$.

Combined with \eqref{estimate-A2'} and \eqref{estimate-A3},  this yields
\begin{equation}
  \begin{split}
    &\P(\kE_{\eqref{kE1}}\cap \kE_{\eqref{kE2}}\cap \kE_{\eqref{kE3}})\\
    &\geq{}1-(\mathbb{P}(\kE_{\eqref{kE1}})+\mathbb{P}(\kE_{\eqref{kE2}})+\mathbb{P}(\kE_{\eqref{kE3}})),\label{prob of A}
  \end{split}
\end{equation}
and \eqref{probab converges to 1}.
 Combining with Lemma~\ref{avoid on A} completes the proof in the case $r=1$.
\subsection{The case $r\in (0,1)$}
Next we consider the case $0<r<1$, where ${\rm f}_{d,r}(N)=(\log{N})^{\frac{1}{1+r}}$. The proof is exactly the same as before except for the estimate of $\P(\kE_{\eqref{kE1}})$. In fact, by (4.2) of \cite{LDP}, \eqref{2.1} is replaced by
\begin{equation}\label{LDP-bound}
  \P\left(\sum_{\eta\in\gamma^w_v}\tau_{\eta}>M{\rm f}_{d,r}(N)\right)\leq{}e^{-cM^r{\rm f}_{d,r}(N)^r},
  \end{equation}
with some constant $c>0$ that depends only on the dimension $d$ and the distribution $F$. We have
\begin{equation}\label{2+}
  \begin{split}
    \P\left(e\text{ is not good}\right)  
 &\leq \prod_{k\leq{}{\rm f}_{d,r}(N)}\P\left(\text{$\exists v,w\in {\rm C}^{(e)}_{k}$  s.t. $\sum_{\eta\in\gamma^w_{v}}\tau_{\eta}>{}M{\rm f}_{d,r}(N)$}\right)\\
    \le& \prod_{k\leq{}{\rm f}_{d,r}(N)}\left\{C(d)^2(\log{N})^{d-1}\max_{v,w\in {\rm C}_k^{(e)}} \P\left(\sum_{\eta\in\gamma^w_{v}}\tau_{\eta}>{}M{\rm f}_{d,r}(N)\right)\right\}\\
    \leq&{}\left(\exp{\left(-\frac{c}{2}M^r {\rm f}_{d,r}(N)\right)}\right)^{\lf{\rm f}_{d,r}(N)\rf}
    \le N^{-2d},
  \end{split}
\end{equation}
  where we used the union bound in the second inequality. Using this and the union bound, we have $ \lim_{N\to\infty} \P(\kE_{\eqref{kE1}}^c)=0$ and \eqref{probab converges to 1} as desired.
\subsection{The case $1<r\le d$}
We consider the case $r\in (1,d]$, where
$${\rm f}_{d,r}(N)=\begin{cases}
  (\log{N})^{\frac{1}{d}}(\log{\log{N}})^{\frac{d-2}{d}}, &\text{ if }r=d-1,\\
  (\log{N})^{\frac{1}{d}} , &\text{ if }d-1< r< d,\\
      (\log{N})^{\frac{1}{d}}(\log{\log{N}})^{-\frac{1}{d}}, &\text{ if }r= d.\\
\end{cases}
$$
Note that in the previous arguments, the estimates of $\P(e\textrm{ is not good})$ are based on simple (sub-)exponential large deviations, see~\eqref{2.1}--\eqref{2.4} for example. It turns out that when $1<r\le d$ we need the following {\it super-exponential} tail estimates on the passage times. 
\begin{prop}[Lemma~4.5 in \cite{CN21}]\label{prop LDP}
  Let $d\geq{}2$. Suppose that the condition of Theorem \ref{thm1} holds with $r> 1$. For any $M_1>0$ there exists $M_2>0$ such that for any $e\in {\rm E}^d$, $L\ge 1$, $0\le k\le L$ and $v,w\in {\rm C}^{(e)}_{k}$,
  \begin{equation*}
    \begin{split}
     P\left( \rmT_{{\rm C}^{(e)}_{k}}(v,w)>{}M_2 L\right) &\le \exp{(-{\rm g}(r,d-1,L,k)M_1)},
     \end{split}
  \end{equation*}
  where
    \begin{equation*}
\begin{split}
{\rm g}(r,d,L,k)=\begin{cases}
    L^r&\text{ if $1<r<d$},\\
    \frac{L^d}{(1+\log{L})^{d-1}}&\text{ if $r=d$},\\
    L^d&\text{ if $r\in(d,d+1)$},\\
    L^{d+1}/k&\text{ if $r=d+1$}.
\end{cases}
\end{split}
\end{equation*}
\end{prop}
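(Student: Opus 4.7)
The proposition asks for a super-exponential upper tail bound on the restricted passage time $t_{[0,k_1]^d}(v,w)$. My plan is to combine two ingredients: a Chernoff bound for the passage time along a single fixed path, which exploits that $r>1$ gives finite exponential moments of $\tau_e$, and a construction of many edge-disjoint paths joining $v$ and $w$ inside the box, which produces the extra exponent through independence.

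The first ingredient is essentially one-dimensional. Since $\P(\tau_e\ge t)\le ce^{-bt^r}$ with $r>1$, a direct calculation yields $\log\E e^{\lambda\tau_e}\le C\lambda^{r/(r-1)}$ for $\lambda$ large. Applying Chernoff's inequality to $\sum_{e\in\Gamma}\tau_e$ along a fixed path $\Gamma$ of length $\ell$ and optimizing the exponential parameter gives, for $x\ge C\ell$,
\[
\P\bigl(t(\Gamma)>x\bigr)\;\le\;\exp\!\bigl(-c\,x^{r}/\ell^{\,r-1}\bigr).
\]
With $x=M_2 L$ the exponent is of order $M_2^{r}L^{r}/\ell^{r-1}$. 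Since $v,w\in[0,k_1]^d$ with $k_1\le 2L+1$, the shortest path between them has length $|v-w|_1\le dk_1\le (2d+1)L$, so a single-path bound alone delivers at best an exponent linear in $L$; to match $g(r,d,L,k_1)$ in the various regimes one has to gain additional decay from independence.

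For the second ingredient I would fix scales $s$ and $\ell$, introduce small cubes $B_s(v)$ and $B_s(w)$ of side $s$ around the endpoints, and apply a standard max-flow/min-cut argument in $[0,k_1]^d\setminus(B_s(v)\cup B_s(w))$ to obtain $N\sim s^{d-1}$ edge-disjoint paths joining $\partial B_s(v)$ and $\partial B_s(w)$, each of length comparable to $\ell$. Prepending and appending connectors inside $B_s(v)$ and $B_s(w)$ produces a family of paths from $v$ to $w$ whose \emph{middle} portions are pairwise edge-disjoint and hence have independent passage times. This yields
\[
\P\Bigl(\min_{1\le i\le N}t\bigl(\mathrm{middle}_i\bigr)>M_2L/2\Bigr)\;\le\;\exp\!\bigl(-c\,N\,M_2^{r}L^{r}/\ell^{\,r-1}\bigr),
\]
and the probability that some connector is slow is controlled by the same Chernoff estimate on short paths of length $\le s$ combined with a union bound on the $O(s^{d-1})$ entry/exit points; this is where the logarithmic corrections in $g(r,d,L,k_1)$ will enter.

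The last step is to optimize $s$, $\ell$ and $N$ case by case so that the product $N\,L^{r}/\ell^{r-1}$ meets the target $g(r,d,L,k_1)$: in the regime $1<r<d$ one can take $\ell\sim L$ with modest $N$; at $r=d$ the needed $N\sim L^{d-1}$ saturates the combinatorial bound and the connector union bound contributes the factor $(\log L)^{d-1}$; for $d<r<d+1$ the topological cap $N\le k_1^{d-1}$ bounds the exponent at $L^{d}$; for $r=d+1$ the minimal admissible tube length forces the $k_1^{-1}$ dependence. The main obstacle is the clean execution of the tube construction together with the bookkeeping of short connectors: because only $O(1)$ genuinely edge-disjoint paths can leave the single vertex $v$, the overlap at the endpoints must be absorbed by a union bound on small-box passage times, and tuning the various scales at the two transitional exponents $r=d$ and $r=d+1$ is the most delicate part.
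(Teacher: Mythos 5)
Your plan has the right high-level idea --- bound a single-path passage time by a Chernoff argument, then gain a super-exponential exponent by exploiting independence across many paths whose middles are edge-disjoint --- and you correctly identify the main obstacle: only $O(1)$ genuinely edge-disjoint paths can emanate from the single vertex $v$, so the fan-out near the endpoints is the bottleneck. However, the proposed remedy --- a union bound on $O(s^{d-1})$ connectors in a side-$s$ box --- does not produce the required exponent, and this gap cannot be closed without the multi-scale structure that the paper builds. Concretely, in the regime $1<r<d$ the target is $\exp(-L^{r}M_1)$. To reach it via $N$ edge-disjoint middles of length $\ell\sim L$ you need $N\,L^{r}/\ell^{\,r-1}\gtrsim L^{r}$, i.e.\ $N\gtrsim L^{r-1}$, so the start points of the tubes fill a region of side $s\sim N^{1/(d-1)}\sim L^{(r-1)/(d-1)}$. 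A connector of length $\sim s$ joining $v$ to one of these start points has tail
\[
\P\bigl(t(\mathrm{conn})>cM_2L\bigr)\;\lesssim\;\exp\!\bigl(-c\,M_2^{\,r}\,L^{\,r}/s^{\,r-1}\bigr)\;=\;\exp\!\bigl(-c\,M_2^{\,r}\,L^{\,r-(r-1)^2/(d-1)}\bigr),
\]
and since $r-(r-1)^2/(d-1)<r$ strictly for $r>1$, a union bound over $N$ connectors decays slower than $\exp(-L^{r}M_1)$ for \emph{every} fixed $M_2$; taking $M_2$ large cannot repair a deficient power of $L$. The same mismatch recurs at $r=d$ and $r=d+1$, so the logarithmic and $k_1^{-1}$ corrections cannot be attributed to a single-scale connector union bound as you suggest. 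A further difficulty is that max-flow/min-cut (Menger) produces disjoint paths without any control on their lengths, which the Chernoff step needs.

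The paper handles the endpoint fan-out differently: after first reducing the box-to-box estimate to a one-directional estimate (Proposition~\ref{prop1}) by splitting the displacement from $v$ to $w$ into $d$ axis-aligned segments, it builds an explicit hierarchical family $\{\Xi^{i_1,\ldots,i_j,j}\}$ of edge sets --- a binary tree over $n\sim\log m$ dyadic scales whose level-$k$ pieces have size $\sim 2^{k}$ and branch $2^{d-1}$-fold --- terminating in a bundle of $2^{(d-1)n}$ genuinely disjoint straight middles $\gamma_{\bar v}$. The passage time along a random branch is controlled by $\sum_{k}2^{-(d-1)k}V_k$ (Lemma~\ref{lem5}), and the phase transitions in $g$ arise exactly from the behaviour of $\sum_{k\le n}2^{-(d-r)k/(r-1)}$ (convergent for $r<d$, $\sim n$ for $r=d$, geometrically divergent for $r>d$); a Markov-inequality argument then shows a positive fraction of branches are fast. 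Your single-scale connector scheme has no mechanism to generate these transitions, so the proof as proposed does not go through.
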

We take $M_1$ to be chosen later and set $M=M_2$ as in Proposition~\ref{prop LDP}. We use the same definitions as in Definition~\ref{good-upp}. 
Then for any sufficiently large $N\in\N$,
\begin{equation}\label{good:r>1}
  \begin{split}
    \P\left(\text{$e$ is not good}\right)&\le \prod_{1\le k\le{}{\rm f}_{d,r}(N)}\P\left(\text{$\exists v,w\in {\rm C}^{(e)}_{k}$\text{ s.t. } $\rmT_{{\rm C}^{(e)}_{k}}(v,w)>{}M{\rm f}_{d,r}(N)$}\right)\\
   &\leq{} (C(d)^2 {\rm f}_{d,r}(N)^{2d})^{{\rm f}_{d,r}(N)}\exp{\left(-\sum_{k=1}^{\lf{\rm f}_{d,r}(N)\rf} {\rm g}(r,d-1,{\rm f}_{d,r}(N),k)M_1\right)}.
  \end{split}
\end{equation}
If $1<r< d-1$, then since ${\rm f}_{d,r}(N)=(\log{N})^{\frac{1}{1+r}}$,
\begin{equation*}\begin{split}\sum_{k=1}^{\lf{\rm f}_{d,r}(N)\rf} {\rm g}(r,d-1,{\rm f}_{d,r}(N),k)&\geq \frac{1}{2}{\rm f}_{d,r}(N)^{1+r}\\
  &\geq \frac{1}{2}\log{N}.
  \end{split}\end{equation*}
If $r=d-1$, then since ${\rm f}_{d,r}(N)=(\log{N})^{\frac{1}{d}}(\log{\log{N}})^{\frac{d-2}{d}}\leq \log{N}$,
\begin{equation*}\begin{split}
    \sum_{k=1}^{\lf{\rm f}_{d,r}(N)\rf} {\rm g}(r,d-1,{\rm f}_{d,r}(N),k)&= \lf{\rm f}_{d,r}(N)\rf \frac{{\rm f}_{d,r}(N)^{d-1}}{(1+\log{{\rm f}_{d,r}(N))}^{d-2}}\\
  &\geq \frac{1}{2}{\rm f}_{d,r}(N)^d (\log{\log{N}})^{-(d-2)}\\
  &\geq \frac{1}{2}\log{N}.
\end{split}\end{equation*}
If $d-1<r<d$, then since ${\rm f}_{d,r}(N)=(\log{N})^{\frac{1}{d}}$,
\begin{equation*}
  \begin{split}\sum_{k=1}^{\lf{\rm f}_{d,r}(N)\rf} {\rm g}(r,d-1,{\rm f}_{d,r}(N),k)&\geq
  \frac{1}{2}{\rm f}_{d,r}(N)^{d}\\
  &\geq \frac{1}{2}\log{N}.
\end{split}\end{equation*}
If $r=d$, then since ${\rm f}_{d,r}(N)=(\log{N})^{\frac{1}{d}}(\log{\log{N}})^{-\frac{1}{d}}$,
\begin{equation*}\begin{split}\sum_{k=1}^{\lf{\rm f}_{d,r}(N)\rf} {\rm g}(r,d-1,{\rm f}_{d,r}(N),k)&\geq
  {\rm f}_{d,r}(N)^{d}\sum^{\lf{\rm f}_{d,r}(N)\rf}_{k=1}\frac{1}{k}\\
  &\geq \frac{1}{2d}\log{N},
\end{split}\end{equation*}
where we have used the following:
$$\sum^{\lf{\rm f}_{d,r}(N)\rf}_{k=1}\frac{1}{k}\geq{}\frac{1}{2d}\log{\log{N}}.$$
In all cases, by \eqref{good:r>1} and ${\rm f}_{d,r}(N)\log{{\rm f}_{d,r}(N)}\ll \log{N}$, if we take $M_1$ sufficiently large, then we have
$$\P(\text{ $e$ is not good })\le N^{-2d}.$$
As before, we can get $\lim_{N\to\infty} \P(\kE_{\eqref{kE1}}^c)=0$ and by Lemma~\ref{avoid on A},  \eqref{estimate-A2'} and \eqref{estimate-A3}, the proof is completed. 

\subsection{ The case $r>d$}
 We consider the case $r>d$, where ${\rm f}_{d,r}(N)=(\log{N})^{\frac{1}{r}}$. Recall the notation $D_L$ from Definition~\ref{Def: B D}.  For sufficiently large $M$,
\begin{equation*}\begin{split}
  \P\left(\text{ $\exists e\in {\rm E}^d$  s.t. $e\subset D_{KN}$ and $\tau_e>M{\rm f}_{d,r}(N)$}\right)&\le a\exp{(-b(M{\rm f}_{d,r}(N))^r)}\\
  &\leq N^{-2},
  \end{split}\end{equation*}
where $K$ is   chosen as  in Lemma \ref{exit-prop}. If for any $e\in {\rm E}^d$ with $e\subset D_{KN}$, $\tau_e\leq M{\rm f}_{d,r}(N)$ and $\kE_{\eqref{kE2}}$ holds, then
$$\max_{\gamma\in\O_N}\mathcal{M}(\gamma)\leq M{\rm f}_{d,r}(N) .$$
Thus, by \eqref{estimate-A2'}, the proof is completed.

\subsection{The case $r=0$}\label{UB r=0}
Let us move onto the case $\E\tau_e^2<\infty,$ where $  {\rm f}_{d,0}(N)=  \frac{\log{N}}{\log{\log{N}}}.$ In this case, since   we cannot expect any exponential bounds  and the estimate of $\P(e\text{ is not good})$ is not enough to get the desired bound, we slightly change the definition of goodness.
\begin{Def}
An edge $e$ is said to be $0$-good if there exists $1\leq k\leq{}M{\rm f}_{d,0}(N)$ such that for any $v,w\in {\rm C}_k^{(e)}$,
  $$\rmT_{{\rm C}_k^{(e)}}(v,w)\leq{}4M^2{\rm f}_{d,0}(N).$$
\end{Def}
Fix $k\leq M{\rm f}_{d,0}(N)$ and $v,w\in {\rm C}_k^{(e)}$. We consider $2(d-1)$ disjoint paths $\{\gamma_i\}^{2(d-1)}_{i=1}$ from $v$ to $w$  on ${\rm C}_k^{(e)}$ so that
$$\max\{\sharp \gamma_i|~i=1,\cdots,2(d-1)\}\leq 8d^2M{\rm f}_{d,0}(N),$$
as in \cite[p 135]{Kes86}.
If we take $M$ sufficiently large, then the  Chebyshev inequality yields that for any $i\in\{1\cdots,2(d-1)\}$,
\begin{equation*}
  \begin{split}
    \P\left( {\rm T}( \gamma_i)>2 M^2{\rm f}_{d,0}(N)\right)&\le  \P\left(\sum_{e\in \gamma_i}(\tau_{e}-\E\tau_e)>M^2{\rm f}_{d,0}(N)\right)\\
    &\le (M^2{\rm f}_{d,0}(N))^{-2}\E\left[\left(\sum_{e\in \gamma_i}(\tau_{e}-\E\tau_e)\right)^2\right]\\
    & \le (M^2{\rm f}_{d,0}(N))^{-2} (8d^2 M{\rm f}_{d,0}(N) \E \tau_e^2)\\
    &\le M^{-1}{\rm f}_{d,0}(N)^{-1}.
\end{split}
\end{equation*}
Thus we have
\begin{equation}\label{fell4}
  \begin{split}
  \P\left(\rmT_{{\rm C}^{(e)}_{k}}(v,w)>2M^2{\rm f}_{d,0}(N)\right)& \leq \prod_{i=1}^{2(d-1)}\P(\rmT(\gamma_i)>2M^2{\rm f}_{d,0}(N))\\
  &\leq M^{-2(d-1)}{\rm f}_{d,0}(N)^{-2(d-1)}.
  \end{split}
\end{equation}
Fix an arbitrary vertex $v_k\in {\rm C}_k^{(e)}$ to each $k\in\N$.  By the triangular inequality, for any $v,w\in {\rm C}_k^{(e)}$, $\rmT_{{\rm C}^{(e)}_k}(v,w)\leq \rmT_{{\rm C}^{(e)}_k}(v_k,v)+\rmT_{{\rm C}^{(e)}_k}(v_k,w)$. Thus, if there exist $v,w\in {\rm C}^{(e)}_{k}$ such that $\rmT_{{\rm C}^{(e)}_k}(v,w)>{}4 M^2 {\rm f}_{d,0}(N)$, then  there exists $z\in {\rm C}^{(e)}_{k}$ such that $\rmT_{{\rm C}^{(e)}_k}(v_k,z)>{}2M^2{\rm f}_{d,0}(N)$.   This yields
\begin{equation}\label{0-good-prob}
  \begin{split}
    \P\left(e\text{ is not $0$-good}\right)&=\P\left(\text{$\forall k\leq{}M{\rm f}_{d,0}(N),~\exists v,w\in {\rm C}^{(e)}_{k}$  s.t. $\rmT_{{\rm C}^{(e)}_{k}}(v,w)>{}4 M^2 {\rm f}_{d,0}(N)$}\right)\\
    &=\prod_{k\leq{}M{\rm f}_{d,0}(N)}\P\left(\exists v,w\in {\rm C}^{(e)}_{k}\text{  s.t. }\rmT_{{\rm C}^{(e)}_{k}}(v,w)>{}4 M^2 {\rm f}_{d,0}(N)\right)\\
    &\leq \prod_{k\leq{}M{\rm f}_{d,0}(N)}\P\left(\text{$\exists z\in {\rm C}^{(e)}_{k}$  s.t. $\rmT_{{\rm C}^{(e)}_k}(v_k,z)>{}2M^2 {\rm f}_{d,0}(N)$}\right).
  \end{split}
\end{equation}
Since  $\sharp{}{\rm C}_k^{(e)}\leq  C(d) k^{d-1}$ with some $C(d)>0$, as in \eqref{2.4}, if $M$ is sufficiently large, by \eqref{fell4}, then this is further bounded from above by
\begin{equation}\label{0-good-prob2}
  \begin{split}
    & \prod_{k\leq{}M{\rm f}_{d,0}(N)}C(d)\, k^{d-1}\max_{z\in {\rm C}_k^{(e)}} \P\left(\rmT_{{\rm C}^{(e)}_k}(v_k,z)>2 M^2 {\rm f}_{d,0}(N)\right)\\
    &\leq \left( C(d)(M{\rm f}_{d,0}(N))^{d-1}\right)^{M {\rm f}_{d,0}(N)} \left( M^{-2(d-1)}{\rm f}_{d,0}(N))^{-2(d-1)}\right)^{M {\lf \rm f}_{d,0}(N)\rf} \\
    &\le {\rm f}_{d,0}(N)^{-(d-1)M\lf {\rm f}_{d,0}(N)\rf}\leq{}N^{-2d}.
\end{split}
\end{equation}
Recall that $D_L(x)=(x+[-L,L]^d)\cap\Z^d.$ We define
\aln{
  \kE_{\eqref{kE1'}}&=\{\forall{}\text{$e\in {\rm E}^d$ with $e\cap D_{KN}\neq{}\emptyset$, $e$ is $0$-good}\},\label{kE1'}\\
  \kE_{\eqref{kE3'}}&=\{\forall v\in\partial{}D_{M{\rm f}_{d,0}(N)}(x),~\rmT(x,v)\leq{}2M^2{\rm f}_{d,0}(N),\text{ for $x=0,N\mathbf{e}_1$}\}.\label{kE3'}
}
By the union bound and \eqref{0-good-prob2}, we have $\lim_{N\to\infty}\P(\kE_{\eqref{kE1'}})=1$. Next we will prove  $\lim_{N\to\infty}\P(\kE_{\eqref{kE3'}})=1.$ For  $x=0,N\mathbf{e}_1$, we take $2d$ disjoint paths from $x$ to $v$ as in \eqref{0-good-prob} to obtain
$$\P\left(\exists v\in\partial{}D_{M{\rm f}_{d,0}(N)}(x)\text{ s.t. }\rmT(x,v)\geq{}2M^2{\rm f}_{d,0}(N)\right)\leq{}\sharp \partial D_{M{\rm f}_{d,0}(N)}(x) ({\rm f}_{d,0}(N))^{-2d}.$$ 
It follows that  $\lim_{N\to\infty}\P(\kE_{\eqref{kE3'}})= 1$. Thus
$$ \lim_{N\to\infty}\P(\kE_{\eqref{kE1'}}\cap \kE_{\eqref{kE2}}\cap \kE_{\eqref{kE3'}})= 1.$$
If there exist  $\gamma\in\O_N$ and an edge $e\in\text{$\gamma$}$ with $e\subset D_{M{\rm f}_{d,0}(N)}(0)$ such that $\tau_e\geq{}2M^2{\rm f}_{d,0}(N)$, then there exists $v\in\partial{}D_{M{\rm f}_{d,0}(N)}(0)$ such that $\rmT(0,v)\geq{}2M^2{\rm f}_{d,0}(N)$. Therefore, by the same proof as in  Proposition~\ref{avoid on A}, under $\kE_{\eqref{kE1'}}\cap \kE_{\eqref{kE2}}\cap \kE_{\eqref{kE3'}}$,
 $$\max_{\gamma\in\O_N}\mathcal{M}(\gamma)\leq 4 M^2 {\rm f}_{d,0}(N),$$
 and thus the proof is completed.

\begin{remark}\label{upp-rem}
  Let us comment how to prove Proposition \ref{prop1'}. When $r>0$, we replace $\kE_{\eqref{kE1}}\cap \kE_{\eqref{kE2}}\cap \kE_{\eqref{kE3}}$ by $\kE_{\eqref{kE1}}\cap \kE_{\eqref{kE2}}$. Indeed, Lemma \ref{avoid on A} can be proved by   exactly the same argument. Moreover, \eqref{prob of A} yields that $\P(\kE_{\eqref{kE1}}\cap \kE_{\eqref{kE2}})\geq{}1-CN^{-d}$ and the Borel-Cantelli lemma leads us to the conclusion.

  When $r=0$, we just replace $\kE_{\eqref{kE1'}}\cap \kE_{\eqref{kE2}}\cap \kE_{\eqref{kE3'}}$ by $\kE_{\eqref{kE1'}}\cap \kE_{\eqref{kE2}}$. The rest is the same as before.
  \end{remark}

\section{Proof for the lower bound}
 \subsection{From the means to the lower bounds}\label{mean to prob}
 Suppose that the condition of Theorem \ref{thm3} or Theorem \ref{thm4} holds. Let $c>0$ be a small positive constant. We take $\tilde{\tau}_e$ such that if $\tau_e< c{\rm f}_{d,r}(N)-1$, then $\tilde{\tau}_e=\tau_e$ and otherwise, $\tilde{\tau}_e=\tau_e+1$. We denote by $\tilde{{\rm T}}(x,y)$ the corresponding first passage time and write $\tilde{{\rm T}}_N=\tilde{{\rm T}}(0,N\mathbf{e}_1)$.  We denote by $\tilde{\O}_N$ the set of optimal paths for $\tilde{\rm T}_N$.  Obviously, $\tilde{{\rm T}}_N\geq {\rm T}_N$. Moreover, if $\min_{\O_N}\mathcal{M}(\gamma)<{}c{\rm f}_{d,r}(N)-1$, then $\tilde{{\rm T}}_N= {\rm T}_N$. By taking the contrapositive, we find that $\tilde{{\rm T}}_N> {\rm T}_N$ implies $\min_{\O_N}\mathcal{M}(\gamma)\geq c{\rm f}_{d,r}(N)-1$. We are going to prove that $\tilde{{\rm T}}_N> {\rm T}_N$ with high probability. The following statement will be proved in the next subsections.
 \begin{lem}\label{lem:mean}
   For any $\delta>0$, there exists $c>0$ such that for any $N\in\N$,
\begin{equation}
\begin{split}
  \E\left[ \min_{\gamma\in\widetilde\O_N}\sharp\{e\in{}\gamma|~\tilde{\tau}_e\geq{}c{\rm f}_{d,r}(N)\}\right]\geq{}cN^{1-\delta}. \label{mean}
  \end{split}
\end{equation}
 \end{lem}
 We will conclude the proof of Theorem~\ref{thm3} first by using Lemma~\ref{lem:mean}.  Note first that
$$\tilde{{\rm T}}_N\geq \rmT_N+\min_{\gamma\in\widetilde\O_N}\sharp\{e\in{}\gamma|~\tilde{\tau}_e\geq{}c{\rm f}_{d,r}(N)\}.$$
 In fact, if we take $\tilde{\gamma}\in\widetilde\O_N$, then
 \begin{equation*}\begin{split}
   \tilde{{\rm T}}_N&=\tilde{{\rm T}}(\tilde{\gamma})\\
   &= \rmT(\tilde{\gamma})+\sharp\{e\in{}\tilde{\gamma}|~\tilde{\tau}_e\geq{}c{\rm f}_{d,r}(N)\}\\
   &\geq \rmT_N+\min_{\gamma\in\widetilde\O_N}\sharp\{e\in{}\gamma|~\tilde{\tau}_e\geq{}c{\rm f}_{d,r}(N)\}.
   \end{split}\end{equation*}
 It follows from Lemma~\ref{mean} that 
\begin{equation}\label{diff-exp}
  \begin{split}
    \E{}\rmT_N+cN^{1-\delta}\leq \E{}\tilde{{\rm T}}_N.
  \end{split}
\end{equation}
Therefore, if both ${\rm T}_N$ and  $\tilde{{\rm T}}_N$ are well-concentrated around their means, then we can conclude  $\tilde{{\rm T}}_N>{\rm T}_N$ with high probability. For this purpose, we introduce the following concentration inequalities.
\begin{lem}\label{conc}
Suppose $\E\tau_e^2<\infty$. For any $\delta\in(0,1/4)$, there exists $C>0$ such that for sufficiently large $N$,
\begin{eqnarray}
  \P\left(|\rmT_N-\E{}\rmT_N|>N^{1-2\delta}\right)\leq{}CN^{-(1-4\delta)},\label{conc1}
\end{eqnarray}
\begin{eqnarray}
\P\left(|\tilde{{\rm T}}_N-\E{}\tilde{{\rm T}}_N|>N^{1-2\delta}\right)\leq{}CN^{-(1-4\delta) }.\label{conc22}
\end{eqnarray}
\end{lem}
\begin{proof}
The proof of this lemma follows from Theorem 3.1 in  \cite{ADH}, which was first proved in \cite{Kes93}. Indeed, since $\E\tau^{2}_e,\E\tilde{\tau}^{2}_e<C'$ with some constant $C'>0$ independent of $N$, Theorem 3.1 in  \cite{ADH} shows that 
\begin{eqnarray}
&\E[(\rmT_N-\E{}\rmT_N)^{2}]\le{}CN, \notag\\
&\E[(\tilde{{\rm T}}_N-\E{}\tilde{{\rm T}}_N)^{2}]\le{}CN,\label{moment2}
\end{eqnarray}
with some constant $C>0$. By the Chebyshev inequality, we have
\begin{equation*}
\begin{split}
\P\left(|\rmT_N-\E{}\rmT_N|>N^{1-2\delta}\right)&\leq{}N^{-2(1-2\delta)}\E[(\rmT_N-\E{}\rmT_N)^{2}]\\
&\leq{}CN^{-(1-4\delta)},
  \end{split}
\end{equation*}
which yields \eqref{conc1}. The same argument proves \eqref{conc22}.
\end{proof}
\begin{proof}[Proof of Theorem~\ref{thm3} and Theorem \ref{thm4} assuming Lemma~\ref{lem:mean}]
Let $\delta<1/4$. If both $|\rmT_N-\E{}\rmT_N|$ and $|\tilde{{\rm T}}_N-\E{}\tilde{{\rm T}}_N|$ are less than or equal to $N^{1-2\delta}$, then by Lemma~\ref{mean}, for sufficiently large $N\in\N$,
\begin{equation*}\begin{split}
  \tilde{{\rm T}}_N&\geq \E{}\tilde{{\rm T}}_N-N^{1-2\delta}\\
  &\geq \E{} \rmT_N+cN^{1-\delta}-N^{1-2\delta}\\
  &\geq \rmT_N+cN^{1-\delta}-2N^{1-2\delta}> \rmT_N.
\end{split}\end{equation*}
 Therefore, Lemma~\ref{conc} leads us to
\begin{equation*}
  \begin{split}
    &\P\left(\rmT_N=\tilde{{\rm T}}_N\right)\\
    &\le \P\left(|\rmT_N-\E{}\rmT_N|>N^{1-2\delta}\right)+\P\left(|\tilde{{\rm T}}_N-\E{}\tilde{{\rm T}}_N|>N^{1-2\delta}\right)\\
    &\le 2CN^{-(1-4\delta)}.
  \end{split}
\end{equation*}
Since $\min_{\O_N}\mathcal{M}(\gamma)<{}c{\rm f}_{d,r}(N)-1$ implies $\rmT_N=\tilde{{\rm T}}_N$, we have Theorem \ref{thm3} and Theorem \ref{thm4}.

\end{proof}
\begin{remark}\label{low-rem}
Let us comment how to prove  Proposition \ref{prop3'}. The proofs of \eqref{mean} can be  applicable  also in this case and Theorem 2 in \cite{Zhang2010} yields the better concentration bounds:
\begin{lem}\label{conc2}
 Suppose $\E[\tau_e^2]<\infty$. Then for any $m\in\N$ and $\delta<1/4$, there exists $N_0$ such that for any $N\ge N_0$,
\begin{eqnarray}
&~&\P\left(|\rmT(D_{L_N}(0),D_{L_N}(N\mathbf{e}_1))-\E{}\rmT( D_{L_N}(0),D_{L_N}(N\mathbf{e}_1))|>N^{1-2\delta}\right)\leq{}N^{-m},\label{conc1'}\\
&~&\P\left(|\tilde{{\rm T}}(D_{L_N}(0),D_{L_N}(N\mathbf{e}_1))-\E{}\tilde{{\rm T}}(D_{L_N}(0),D_{L_N}(N\mathbf{e}_1))|>N^{1-2\delta}\right)\leq{}N^{-m}. \label{conc2'}
\end{eqnarray}
\end{lem}
  Combining it with the previous arguments and the Borel-Cantelli lemma, we have the desired conclusion.
  \end{remark}
\subsection{Proof of Lemma~\ref{lem:mean}}
 Our goal is to prove \eqref{mean}. In this section, we explain the general arguments and we will evaluate what appear there in several cases in the following sections. The proof is based on the argument in \cite{BK93}, but the choice of box sizes and configurations inside of the box are considerably more complicated. The following lemma appears in Lemma 5.5 of \cite{BK93}.
\begin{lem}\label{lem2}
There exist $\delta_{\ref{lem2}}>0$ and $K>0$ such that for any $v,w\in\Z^d$,
$$\P\left(\rmT(v,w)<(\underline{\tau}+\delta_{\ref{lem2}})|v-w|_1\right)\leq{}e^{-K|v-w|_1}.$$
\end{lem}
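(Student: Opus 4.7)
The lemma is the classical Kesten--van den Berg--Kesten statement that ``useful'' distributions have strictly positive second-moment-like advantage over the essential infimum, with exponential large deviations. My plan is to reduce the estimate to a subcritical percolation bound on the set of ``light'' edges, via a Peierls-style energy-counting argument.

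\textbf{Step 1: Choice of $\delta_7$ from usefulness.} Invoking the two disjoint parts of Definition \ref{def:useful} and the right-continuity of $F$, I would pick $\delta_7>0$ small enough that in Case (i) ($F^-=0$, $F(0)<p_c$) we still have $F(\delta_7)<p_c$, and in Case (ii) ($F^->0$, $F(F^-)<\vec p_c$) we still have $F(F^-+\delta_7)<\vec p_c$. Declare an edge $e$ \emph{light} if $\tau_e\le F^-+\delta_7$ and \emph{heavy} otherwise. Light edges are then dominated by subcritical Bernoulli (unoriented in Case (i), oriented in Case (ii)) percolation, so by Menshikov's theorem there exist constants $c,C>0$ such that, for every $v\in\Z^d$, the light-cluster $C_{\mathrm{lt}}(v)$ satisfies $\P(|C_{\mathrm{lt}}(v)|\ge k)\le Ce^{-ck}$ (respectively, oriented analogue in Case (ii)).

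\textbf{Step 2: Deterministic energy inequality.} For any path $\pi$ from $v$ to $w$ of edge-length $\ell\ge n:=|v-w|_1$ with $H(\pi)$ heavy edges, the bound
\[
 t(\pi)\ \ge\ F^-\,\ell\ +\ \delta_7\,H(\pi)\ \ge\ F^-\,n\ +\ \delta_7\,H(\pi)
\]
holds. Consequently, on the event $\{t(v,w)<(F^-+\delta_7/2)\,n\}$ every optimal path satisfies $H(\pi)<n/2$; moreover, when $F^->0$ we also get $\ell<n(1+\delta_7/(2F^-))$. So it suffices to bound the probability that some $v$--$w$ path exists with at most $n/2$ heavy edges (and, in Case (ii), bounded length).

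\textbf{Step 3: Subcritical percolation estimate.} Remove the heavy edges of $\pi$: the remainder splits into at most $H(\pi)+1\le n/2+1$ maximal light sub-paths, each lying inside a single light cluster. Since the $\ell^1$-displacement accumulated by these sub-paths plus the heavy edges must equal $n$, and each heavy edge contributes displacement $\le 1$, the total displacement in light clusters is $\ge n/2$. This forces one of the visited light clusters to have diameter $\gtrsim 1$; iterating the argument with the \emph{renormalised} (contracted) graph on light clusters, any such ``mostly-light'' $v$-to-$w$ path corresponds to a connection in the cluster graph with at least $\Omega(n)$ ``independent'' light-cluster pieces, whose joint probability is $\le e^{-Kn}$ by the exponential connectivity decay from Step~1 together with a BK-type disjoint-occurrence inequality. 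In Case (ii) the oriented subcritical bound is applied to the projection of $\pi$ onto the coordinate with maximal displacement. In Case (i) this is slightly more subtle and I would use the strong exponential decay of the radius of $C_{\mathrm{lt}}(v)$ to deduce that, w.p.\ $1-e^{-Kn}$, every path from $v$ reaching $\ell^1$-distance $n$ must cross at least $n/2$ heavy edges, contradicting $H(\pi)<n/2$.

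\textbf{Main obstacle.} The genuinely delicate part is Case (i), because there $F^-=0$ leaves the path length $\ell$ unbounded: one cannot reduce to finitely many paths by pure counting. The control comes only from the geometric requirement that $\pi$ attain $\ell^1$-displacement $n$ while using few heavy edges, combined with the subcritical radius bound on light clusters. Making this reduction quantitative (in particular, handling the possibility of very long excursions inside a single cluster) is where the oriented versus unoriented dichotomy really matters, and is the step that forces the form of the usefulness hypothesis; everything else is a standard large-deviation/Peierls argument.
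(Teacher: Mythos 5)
The paper does not actually prove this lemma: it is quoted verbatim as Lemma 5.5 of van den Berg--Kesten \cite{BK}, so the only meaningful comparison is with the argument in that reference, which your outline is evidently trying to reconstruct. Your Steps 1 and 2 — the choice of $\delta_7$ by right-continuity of $F$, the deterministic bound $t(\pi)\ge F^-\ell+\delta_7 H(\pi)$, and the resulting constraints $H(\pi)<n/2$ and (when $F^->0$) $\ell\le(1+\delta_7/(2F^-))\,n$ — are correct and are exactly the standard reductions. Case (i) is in fact the easy case: since every path from $v$ to $w$ has at least $|v-w|_1$ edges, it follows immediately from Kesten's Proposition 5.8, i.e.\ the very estimate \eqref{5.8} that this paper already invokes in the proof of Lemma \ref{exit-prop}; the cluster-decomposition/BK-inequality argument you sketch is essentially the proof of that proposition, and it works there because the light edges are genuinely subcritical for \emph{unoriented} percolation.

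The genuine gap is in Case (ii). The hypothesis there is only $F(F^-+\delta_7)<\vec p_c$, and since $\vec p_c(d)>p_c(d)$ the light edges may be \emph{supercritical} for unoriented percolation: the unoriented light clusters can be infinite, and their radii have no exponential tail. Your Step 3 decomposes $\pi$ into unoriented light sub-paths lying in light clusters and then invokes exponential decay of cluster sizes — precisely the quantity you do not control in Case (ii). The ``oriented analogue'' of the decay bounds only clusters of \emph{oriented} light paths, and the sub-paths of $\pi$ are not oriented. The missing idea, which is the real content of \cite{BK}, is to combine the length bound $\ell\le(1+\epsilon)n$ with $H(\pi)<\epsilon n$ to show that such a path is ``almost oriented'' and, after a coarse-graining, forces a long connection in a subcritical \emph{oriented} percolation of light edges. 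Merely ``projecting onto the coordinate with maximal displacement'' does not produce this oriented structure, and the fact that your closing paragraph singles out Case (i) rather than Case (ii) as the delicate one suggests the step has not been thought through. As written, Step 3 fails in Case (ii).
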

 We fix $\delta_{\ref{lem2}}>0$ that satisfies Lemma \ref{lem2}. Note that Lemma \ref{lem2} also holds with $\tilde{\tau}$ since $\tilde{\tau}_e\geq \tau_e$. Remark that the usefulness of $F$ assumed in Theorem~\ref{thm3} and \ref{thm4} is used only in Lemma \ref{lem2} to prove \eqref{mean}. Since $\tilde{\tau}$ satisfies the condition of Theorem \ref{thm3} with the same $r$,  while the other parameters may  be different but independent of $N$, it suffices to show \eqref{mean} for $\tau$, i.e.
  \begin{equation}\label{new-mean}
\begin{split}
  \E\left[ \min_{\gamma\in \O_N}\sharp\{e\in{}\gamma|~\tau_e\geq{}c{\rm f}_{d,r}(N)\}\right]\geq{}cN^{1-\delta}.
  \end{split}
\end{equation}
  We fix constants $M,s_0,s_1>0$ such that $s_1\ll s_0 \ll M$ to be specified later. Set $n=\lf s_0 {\rm f}_{d,r}(N)\rf$ and $n_1=\lf s_1{\rm f}_{d,r}(N)\rf$, where $\lf \cdot\rf$ is a floor function.  We define three kinds of boxes whose notations are the same as in \cite{BK93} (see Figure \ref{fig:two}). First, we define hypercubes $S(l;n)$ for $l=(l_i)_{i=1}^d\in\Z^d$ as 
$$S(l;n)=\{v\in\Z^d:n l_i\le v_i< n(l_i+1),~1\le i\le d\}.$$
We call these hypercubes $n$-cubes. Second, we define large $n-$cubes $T(l;n)$ for $l\in\Z^d$ as $$T(l;n)=\{v\in\Z^d:n l_i-n\le v_i\le n(l_i+2),~1\le i\le d\}.$$
Finally, we define $n$-boxes ${\rm B}^j(l;n)$ for $l\in\Z^d$ and $j\in\{\pm 1,\cdots,\pm d\}$ as
$${\rm B}^j(l;n)=T(l;n)\cap{}T(l+2{\rm sgn}(j)\mathbf{e}_{|j|};n),$$
and its inner boundary $\partial{}{\rm B}^j(l;n)$ as
$$\partial{}{\rm B}^j(l;n)=\{v\in{}{\rm B}^j(l;n)|~\text{there exists }w\notin{}{\rm B}^j(l;n)\text{ s.t. }|v-w|_1=1\}.$$
Note that $S(l;n)\subset{}T(l;n)$ and ${\rm B}^j(l;n)$ is a closed box of size $3n\times\cdots\times{}3n\times{}n\times{}3n\cdots\times{}3n$, where $n$ is the length of $i$-th coordinate and $3n$ are the lengths of the other coordinates.
\begin{figure}[b]
\includegraphics[width=5.0cm]{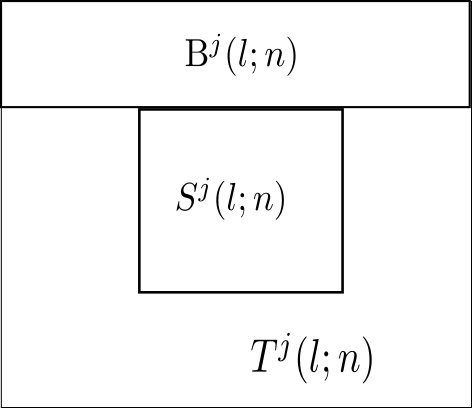}
\hspace{10mm}
\includegraphics[width=6.0cm]{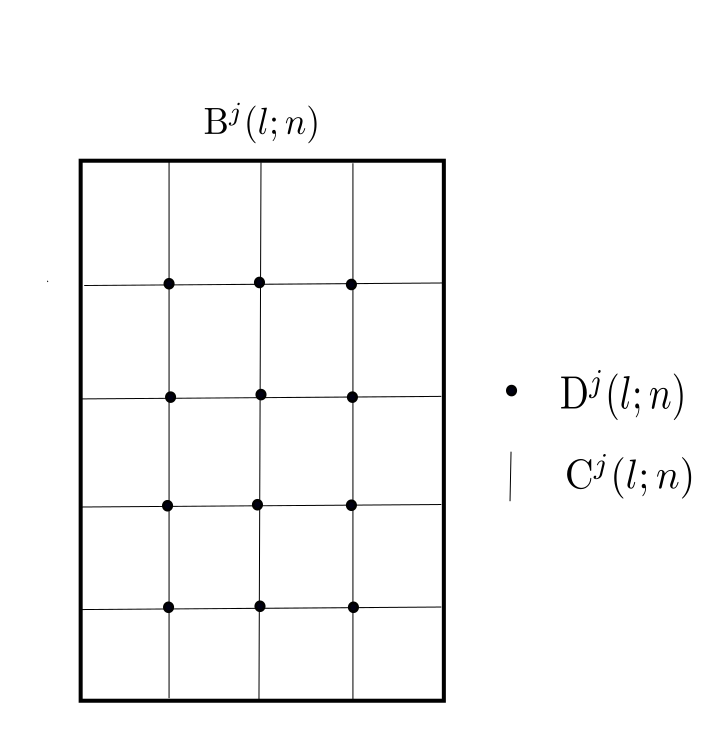}
\caption{}
Left: The figure of $S(l;n)$,  $T(l;n)$ and ${\rm B}^j(l;n)$.\\
Right: The figure of ${\rm C}(l;n)$ and ${\rm D}^j(l;n)$.
  \label{fig:two}
\end{figure}
  Let
\al{
{\rm D}^j(l;n)&=\{v\in{}{\rm B}^j(l;n)|~\text{$d_{\infty}(v,{\rm B}^j(l;n)^c)>{}n_1$, $v\in{}n_1\Z^d$}\},\\
{\rm C}^j(l;n)&=\left\{v+k \mathbf{e}_i|~v+k \mathbf{e}_i\in {\rm B}^j(l;n),~k\in\Z,~i\in\{1,\cdots,d\}, v\in{}{\rm D}^j(l;n)\right\}.\\
  \tilde{\rm C}^j(l;n)&=\{\langle v,w \rangle|~ v,w\in{}{\rm C}^j(l;n)\text{, }|v-w|_1=1\}.
}
  \begin{Def}
    We consider the following conditions:\\

    

(Black--1) For any $v,w\in {\rm B}^j(l;n)$ with $|v-w|_1\geq{}(\log{N})^{\frac{1}{4d(r+1)}}$,
    $$\rmT(v,w)\geq{}(\underline{\tau}+\delta_{\ref{lem2}})|v-w|_1,$$
     where $\delta_{\ref{lem2}}>0$ is the constant in Lemma \ref{lem2}.\\

(Black--2) For any $v,w\in\partial{}{\rm B}^j(l;n)$, 
$$\rmT_{\partial{}{\rm B}^j(l;n)}(v,w)\le {}M\left(|v-w|_1\lor (\log{N})^{\frac{1}{4d(r+1)}}\right).$$

(Black--3) For any edge $e\subset{} {\rm B}^j(l;n)$,
     $\tau_e\leq{}(\log{}N)^{\frac{1}{4d(r+1)}}.$\\

(Black--4) For any $v\in\partial{}{\rm B}^j(l;n)$, there exists $w\in \partial{}{\rm B}^j(l;n)\cap {\rm C}^j(l;n)$ such that, 
$$\rmT_{\partial{}{\rm B}^j(l;n)}(v,w)\le {}M n_1.$$

    When $r>0$, an $n$-box ${\rm B}^j(l,n)$ is said to be black if (Black--1), (Black--2) and  (Black--3) hold. When $r=0$, the $n$-box ${\rm B}^j(l,n)$ is called black if (Black--1) and (Black--4) hold. An $n$-cube $S(l,n)$ is said to be black if each of its surrounding $n$-boxes is black.
  \end{Def}
  The reason why $(\log{N})^{\frac{1}{4d(r+1)}}$ appears in the above definition will be clear in the following lemma, though the specific choice of the exponent is not that important. Note that $|{\rm C}^j(l;n)|\le C(s){\rm f}_{d,r}(N)$ with some positive constant $C(s)$ depending only on $s$, $s_1$ and $d$.
  \begin{lem}\label{black go to 1}
    If we take $M$ sufficiently large, then
    $$\P\left({\rm B}^j(l;n)\text{ is black }\right)\to{}1\text{ as }N\to{}\infty.$$
  \end{lem}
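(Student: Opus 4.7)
The plan is to verify the two defining conditions (Black--1) and (Black--2) hold separately with probability tending to $1$ as $n \to \infty$, then conclude by a union bound. For (Black--1) I invoke Lemma~\ref{lem2} directly: for each pair $v,w \in B^j(l;n)$ with $|v-w|_1 \geq n$, the probability $\P(t(v,w) < (F^- + \delta_7)|v-w|_1)$ is at most $e^{-K|v-w|_1} \leq e^{-Kn}$, and there are at most $(3n+1)^{2d}$ such pairs. A union bound then gives $\P(\text{(Black--1) fails}) \leq Cn^{2d} e^{-Kn} \to 0$.

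For (Black--2) I first make a deterministic choice of target and path for each boundary vertex. Restricted to any face $F$ of $B^j(l;n)$, the set $C^j(l;n) \cap F$ is an $n_1$-spaced grid in the $(d-1)$ coordinate directions parallel to $F$, formed by the hit points of the $\mathbf{e}_{\perp F}$-lines through the points of $D^j(l;n)$. Consequently, for any $v \in \partial B^j(l;n)$, I can pick $w(v) \in C^j(l;n) \cap \partial B^j(l;n)$ on the same face as $v$ with $|v - w(v)|_1 \leq dn_1$, together with a monotone lattice path $\pi(v) \subset F \subset \partial B^j(l;n)$ of length $L(v) \leq dn_1 \leq 2dn_1$. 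For such a deterministic path, $t(\pi(v))$ is a sum of i.i.d.\ copies of $\tau_e$; using $\E[\tau_e^m] < \infty$ for every $m$ (hypothesis~(2) of Theorem~\ref{thm3}), Rosenthal's inequality yields $\E[|t(\pi(v)) - \mu L(v)|^{2m}] \leq C_m L(v)^m$ with $\mu = \E[\tau_e]$. Choosing $M > d\mu$ and applying Markov,
\[
\P(t(\pi(v)) > Mn_1) \leq \P(|t(\pi(v)) - \mu L(v)| > (M - d\mu)n_1) \leq C'_m n_1^{-m}.
\]
A union bound over the $O(n^{d-1})$ boundary vertices then gives $\P(\text{(Black--2) fails}) \leq C''_m s^{-m} n^{d-1-m} \to 0$ for any $m > d-1$.

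The main subtlety is the constraint on $s$. I need $2dn_1 < n$, i.e.\ $s < 1/(2d)$, so that short Black--2 paths do not trigger the Black--1 regime; moreover, when this lemma is later combined with the flipping argument, the shortcut budget $Mn_1$ must stay strictly below the $\delta_7 n_1$ surplus that Black--1 provides along $C^j(l;n)$, which forces $M$ (and hence $s$ via the requirement $M > d\mu$) to be small enough relative to $\delta_7$. These constraints pin $s$ down as a small constant depending on $\delta_7$ without interfering with the concentration estimates above, which only require $s$ fixed and positive. Combining the two probability bounds completes the proof.
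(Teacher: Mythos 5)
Your proof is correct in the context where the lemma is stated (the standing hypotheses of Theorem \ref{thm3}, under which all moments of $\tau_e$ are finite), and the treatment of (Black--1) is identical to the paper's: Lemma \ref{lem2} plus a union bound over the $O(n^{2d})$ pairs. For (Black--2) you diverge from the paper in the concentration step: you run a single deterministic path per boundary vertex and compensate with a Rosenthal/high-moment bound $\P(t(\pi(v))>Mn_1)\le C_m n_1^{-m}$, taking $m>d-1$ to beat the union bound over the $O(n^{d-1})$ boundary vertices. The paper instead uses only $\E\tau_e^2<\infty$: it routes $2(d-1)$ \emph{disjoint} paths on $\partial B^j(l;n)$ between $v$ and its target (as in Kesten's argument and in the proof of Theorem \ref{thm2}), applies Chebyshev to each to get a factor $O(n_1^{-1})$, and multiplies over the independent disjoint paths to reach $O(n_1^{-2(d-1)})$, which again beats $n^{d-1}$. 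The practical difference is robustness of the moment assumption: Lemma \ref{black go to 1} is reused verbatim in Subsection \ref{low:r=0} under the hypotheses of Theorem \ref{thm4}, where only $\E\tau_e^2<\infty$ is assumed, and there your Rosenthal step would not be available while the paper's disjoint-paths argument still works. Your argument buys nothing extra here, so if you want a single proof covering both cases you should adopt the second-moment version. Two minor points: the role of ``$s$ small depending on $\delta_7$'' is, as you correctly sense, not needed for the convergence of the probability (which holds for every fixed $s>0$) but only for the later comparison $Mn_1\ll\delta_7 n$ in Proposition \ref{crucial-prop}; however, your phrasing that this ``forces $M$ to be small relative to $\delta_7$'' is backwards --- $M$ is fixed (large) first and then $s$ is taken small so that $Ms\ll\delta_7$. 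Also, near the corners of a face the nearest point of $C^j(l;n)\cap\partial B^j(l;n)$ can be at $\ell^1$-distance closer to $2dn_1$ than $dn_1$, but this only affects constants.
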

  \begin{proof}
    By Lemma \ref{lem2} and the union bound,
    \begin{equation*}
      \begin{split}
        &\P({\rm B}^j(l;n)\text{ does not satisfy (Black--1)})\\
        &\leq 2d\sharp {\rm B}^j(l;n)^2\max\{\P(\rmT(v,w)< (\underline{\tau}+\delta)|v-w|_1)|~v,w\in \Z^d,~|v-w|_1\geq (\log{N})^{\frac{1}{4d(r+1)}}\}\\
        &\leq 2d (3n)^{2d} \exp{(-K (\log{N})^{\frac{1}{4d(r+1)}})},
      \end{split}
    \end{equation*}
    which converges to $0$ and thus (Black--1) holds with high probability.
    
     Next, we consider (Black--2) with $r>0$. Let $v,w\in \partial {\rm B}^j(l;n)$ and $\gamma^v_w:v\to w$ be a path on $\partial {\rm B}^j(l;n)$ whose length is at most $6d|v-w|_1$.
  Since  $\E\tau_e^{2m}<\infty$ with $m=\lf 32d^2(r+1)\rf$, by the same argument as in Lemma~\ref{exit-prop}, for any $v,w\in \partial {\rm B}^j(l;n)$, we have
  \begin{align*}
    &\quad \P\left( \rmT_{\partial {\rm B}^j(l;n)}(v,w)>M(|v-w|_1 \lor(\log{}N)^{\frac{1}{4d(r+1)}}) \right)\\
    &\le \P\left( \rmT(\gamma^v_w)-\E \rmT(\gamma^v_w)>\frac{M}{2}(|v-w|_1 \lor(\log{}N)^{\frac{1}{4d(r+1)}}) \right)\\
    &\leq \left(\frac{M}{2}(|v-w|_1 \lor(\log{}N)^{\frac{1}{4d(r+1)}}) \right)^{-2m} \E[(\rmT(\gamma^v_w)-\E \rmT(\gamma^v_w))^{2m}]\\
    & \le (\log{N})^{-2d}.
  \end{align*}
  By $\sharp \partial {\rm B}^j(l;n)\le C(d){\rm f}_{d,r}(N)^{d-1}\le C(d)(\log{N})^{d-1}$ with some constant $C(d)>0$, (Black--2) holds with high probability  for $r>0$.

  By the union bound and $\P(\tau_e>(\log{}N)^{\frac{1}{4d(r+1)}})\leq C(\log{N})^{-2d}$, (Black--3) holds with high probability for $r>0$.

  Finally, we consider (Black--4) with $r=0$. We fix $v\in \partial{}{\rm B}^j(l;n)$. There exists $w\in \partial{}{\rm B}^j(l;n)\cap {\rm C}^j(l;n)$ such that $|v-w|_1\leq 2d n_1$. Consider $2(d-1)$ disjoing paths $(r_i)_{i=1}^{2(d-1)}$ from $v$ to $w$ on $\partial{}{\rm B}^j(l;n)$ so that $\sharp r_i\leq  4d n_1$ as in Section~\ref{UB r=0}. For $M$ large enough, using the Chebyshev inequality, we have
  \al{
    \P({\rm T}_{ \partial{}{\rm B}^j(l;n)}(v,w)>M n_1)&\leq \prod_{i=1}^{2(d-1)}\P({\rm T}_{ \partial{}{\rm B}^j(l;n)}(r_i)>M n_1)\\
    &\leq n_1^{2(d-1)}.
  }
  By the union bound, we conclude that
  \al{
    &\qquad \P(\exists v\in \partial{}{\rm B}^j(l;n)\,\text{s.t.}\,\forall w\in \partial{}{\rm B}^j(l;n)\cap {\rm C}^j(l;n),\, {\rm T}_{ \partial{}{\rm B}^j(l;n)}(v,w)>M n_1)\\
    &\leq ( \sharp \partial{}{\rm B}^j(l;n)) n_1^{2(d-1)}\leq C(d) n^{d-1} n_1^{2(d-1)},
  }
  which converges to $0$. Therefore,  (Black--4) holds with high probability for $r=0$.
  \end{proof}
   Combining the previous lemma and a similar argument (Peierls argument) of (5.2) in \cite{BK93} shows the following lemma. We skip the details.
\begin{lem}\label{lem:p}
There exist $\epsilon,u>0$ such that for any $N\in\N$,
 $$\P\left(\exists\text{$\gamma\in \O_N$ visiting at most $\frac{\epsilon{}N}{{\rm f}_{d,r}(N)}$ distinct black $n$-cubes}\right)\leq{}\exp{\left(-u\frac{N}{{\rm f}_{d,r}(N)}\right)}.$$
\end{lem}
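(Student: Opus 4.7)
The plan is to carry out the standard Peierls-type lattice animal argument from (5.2) of \cite{BK}, now at the scale of the $n$-cubes. I would first record the geometric input, then a decoupling step, and finally a union bound.

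First, any $\Gamma\in{\rm Opt}_N$ visits at least $M:=[N/n]\ge N/f_{d,r}(N)-1$ distinct $n$-cubes, since the $\mathbf{e}_1$-projection of $\Gamma$ covers $\{0,1,\ldots,N\}$ while distinct $n$-cubes in the same $\mathbf{e}_1$-column share the same projection of length $n$. Since consecutive vertices of $\Gamma$ lie in the same or in $*$-adjacent $n$-cubes, the set $\mathcal{S}(\Gamma)$ of cubes visited is a $*$-connected lattice animal of size at least $M$ rooted at the cube containing the origin.

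Suppose next that $\Gamma$ visits at most $\epsilon N/f_{d,r}(N)$ black cubes; then at least $(1-2\epsilon)M$ cubes of $\mathcal{S}(\Gamma)$ are non-black. To decouple the blackness events I would fix a large dimensional constant $K$ and partition $\Z^d$ into $K^d$ residue classes modulo $K$: by pigeonhole, some class contains a sub-collection $\mathcal{S}'\subset\mathcal{S}(\Gamma)$ of at least $(1-2\epsilon)M/K^d$ non-black cubes, pairwise separated by $\ell^\infty$-label distance at least $K$. For $K$ large these non-blackness events become essentially independent, since (Black--2) is local to $\partial B^j(l;n)$ and (Black--1) can be localized by replacing $t(v,w)$ by a restricted passage time $t_{\widetilde B}(v,w)$ on a moderately enlarged box $\widetilde B\supset B^j(l;n)$, with the comparison error controlled by Lemma \ref{lem2}.

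Combining these ingredients by a union bound gives
\[
  \P\bigl(\exists\Gamma\in{\rm Opt}_N \text{ visiting at most } \epsilon N/f_{d,r}(N) \text{ black cubes}\bigr)\le K^d\cdot C_1^M\cdot\binom{M}{[\epsilon M]}\cdot p^{(1-2\epsilon)M/K^d},
\]
where $C_1^M$ bounds the number of $*$-connected lattice animals of size $M$ rooted at a fixed cube and $p=\P(B^j(l;n) \text{ is not black})$. By Lemma \ref{black go to 1}, $p\to 0$ as $N\to\infty$, so first choosing $\epsilon$ small enough that $\binom{M}{[\epsilon M]}\le (e/\epsilon)^{\epsilon M}$ contributes only a factor close to $1$ per cube, and then $N$ large so that $C_1(e/\epsilon)^\epsilon p^{(1-2\epsilon)/K^d}\le e^{-D}$ for a fixed $D>0$, yields the claimed bound $\exp(-DN/f_{d,r}(N))$; the finitely many small values of $N$ are absorbed by shrinking $D$.

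The main obstacle is the decoupling step: condition (Black--1) is \emph{a priori} global since $t(v,w)$ depends on every edge weight, so one must carefully argue a bounded range of dependence either by replacing $t$ with a restricted passage time on a large surrounding box (together with a high-probability comparison estimate) or by invoking BK/FKG-type inequalities that exploit the monotonicity of (Black--1), as in \cite{BK}. Once this decoupling is established, the remaining contour-counting steps are entirely routine.
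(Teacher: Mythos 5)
Your proposal is essentially the argument the paper has in mind: the paper gives no proof of Lemma \ref{lem:p}, deferring entirely to the Peierls/lattice-animal argument of (5.2) in \cite{BK}, which is exactly what you carry out (animal of visited $n$-cubes of size $\gtrsim N/n$, entropy bound, sparsification into $K$-separated cubes, and $\P(\text{not black})\to 0$ from Lemma \ref{black go to 1}). Your explicit handling of the non-locality of (Black--1) via restriction to enlarged boxes plus Kesten's Proposition 5.8 is a correct resolution of a detail the paper silently elides, so the proof stands.
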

We note that $\epsilon$ and $u$ above depend on $s_0,s_1,M$ but not on $N$. A path which starts in $S(l;n)$ and ends outside $T(l;n)$ must have a segment which lies entirely in one of the surrounding $n$-boxes, and which connects the two opposite large faces of that $n$-box. This means that the path crosses the $n$-box in the short direction (See Figure \ref{fig:three}). Hereafter  ``crossing an $n$-box'' means crossing in the short direction. From this and Lemma \ref{lem:p}, we have
\begin{equation}\label{estimate-Black-box}
  \E[\sharp\{\text{distinct black $n$-box ${\rm B}^j(l;n)$  s.t. $\exists \gamma \in \O_N$ crossing ${\rm B}^j(l;n)$} \}]\geq{}\frac{\epsilon{}N}{2{\rm f}_{d,r}(N)}.
  \end{equation}
Fix some small constant $c>0$ depending on $s$, to be chosen later.
\begin{Def}
  An $n$-box ${\rm B}^j(l;n)$ is said to be good if for any $\gamma\in \O_N$, there exists $e\in\gamma$ such that both vertices of $e$ are in ${\rm B}^j(l;n)$ and $\tau_e\geq{}c{\rm f}_{d,r}(N)$.
\end{Def}
Note that
\begin{equation}\label{OT1}
\begin{split}
&\E\left[\inf_{\gamma\in \O_N}\sharp\{e\in{}\gamma:~ \tau_e\geq{}c{\rm f}_{d,r}(N)\}\right]\\
  &\geq{}\frac{1}{2d}\E[\sharp\{(j,l)|\text{${\rm B}^j(l;n)$ is a good $n$-box} \}]\\
  &= \frac{1}{2d}\sum_{(j,l)}\P({\rm B}^j(l;n)\text{ is a good $n$-box}),
  \end{split}
\end{equation}
where $2d$ appears because of the overlap of $n$-boxes. On the other hand, \eqref{estimate-Black-box} yields
\begin{equation}\label{OT2}
  \begin{split}
   \frac{1}{2}\frac{\epsilon{}N}{{\rm f}_{d,r}(N)} &\leq {}\E[\sharp\{\text{distinct black $n$-box ${\rm B}^j(l;n)$  s.t. $\exists \gamma\in$$\O_N$ crosses ${\rm B}^j(l;n)$} \}]\\
  &= \sum_{(j,l)}\P(\text{${\rm B}^j(l;n)$ is black and $\exists \gamma\in \O_N$ crosses ${\rm B}^j(l;n)$}).
  \end{split}
\end{equation}
The next proposition will be proved in subsequent sections, which implies that black boxes can be made good boxes  without too much cost.
\begin{prop}\label{crucial-don}
  With some choices of $c,s_0,s_1,M$, there exists $\delta>0$ such that for $N$ large enough,
 \begin{equation*}
\begin{split}
  &\qquad \P({\rm B}^j(l;n)\text{ is good})\\
  &\geq N^{-\delta}\P\left(\text{${\rm B}^j(l;n)$ is black and $\exists\gamma\in\O_N$ crosses ${\rm B}^j(l;n)$}\right).
  \end{split}
\end{equation*}
\end{prop}
\begin{proof}[Proof of Lemma~\ref{lem:mean} assuming Proposition~\ref{crucial-don}.]
 Combining Proposition~\ref{crucial-don} with \eqref{OT1} and \eqref{OT2}, we have
\begin{equation*}
\begin{split}
&\E\left[\inf_{\gamma\in \O_N}\sharp\{e\in{}\gamma:~\tau_e\geq{}c{\rm f}_{d,r}(N)\}\right]\\
&\geq{}\frac{1}{2d}N^{-\delta}\E[\sharp\{\text{distinct black $n$-box ${\rm B}^j(l;n)$  s.t. $\exists \gamma\in$$\O_N$ crosses ${\rm B}^j(l;n)$} \}]\\
&\geq{}\frac{1}{4d}\frac{\epsilon{}N^{1-\delta}}{{\rm f}_{d,r}(N)},
  \end{split}
\end{equation*}
which proves Lemma~\ref{new-mean},  retaking $\delta>0$.
\end{proof}
\subsection{Lower bound for $0< r<d-1$ or $r=1$}\label{low:r<d-1}
Set $n=\lf {\rm f}_{d,r}(N)\rf$ and $n_1=\lf s{\rm f}_{d,r}(N)\rf$, where $s$ is chosen later.  Our goal is to prove Propositioin~\ref{crucial-don}. For the main step, we set
\begin{equation}\label{oppappi2}
\begin{split}
&\tilde{\rm E}^j(l;n)=\{\langle{}v,w\rangle|~v\in{}{\rm C}^j(l;n)\backslash\partial {\rm B}^j(l;n)\text{ and }w\notin{}{\rm C}^j(l;n)\backslash\partial {\rm B}^j(l;n)\}.
\end{split}
\end{equation}
Let us note that any path has to pass through at least one of edges of $\tilde{\rm E}^j(l;n)$ to enter  ${\rm C}^j(l;n)\backslash \partial {\rm B}^j(l;n)$.
It is easy to see that there exists $C(s)>0$ such that
\begin{equation}\label{oppappi}
  |\tilde{\rm C}^j(l;n)|,~|\tilde{\rm E}^j(l;n)|\le C(s){\rm f}_{d,r}(N).
\end{equation}
\begin{Def}\label{Def:A1}
   We say that the collection $\tau=\{\tau_e\}_{e\in {\rm E}^d}$ satisfies $A_1$--condition if 
   $$ \begin{cases}
  c{\rm f}_{d,r}(N)\leq{}\tau_e\leq{}\kappa{}c{\rm f}_{d,r}(N) & \text{when $e\in{}\tilde{\rm E}^j(l;n)$}, \\
  \tau_e\leq{}\underline{\tau}+c&\text{when $e\in\tilde{\rm C}^j(l;n)\backslash\tilde{\rm E}^j(l;n)$},\\
   \end{cases}$$
  where $\kappa$ is  the parameter in Theorem \ref{thm3}. If $\tau$ satisfies $A_1$--condition, then we write $\tau\in A_1$.
\end{Def}
Under the $A_1$--condition,  the weights are atypically small on the grid (i.e. ${\rm C}^j(l;n)$) but a path has to pass through a large weight to enter the grid.

A resampled configuration $\tau^*=\{\tau^*_e\}_{e \in {\rm E}^d}$ is taken to be $\tau^*_e=\tau_e$ if $e\notin\tilde{\rm C}^j(l;n)\cup \tilde{\rm E}^j(l;n)$  and an independent copy of $\tau_e$ if $e\in \tilde{\rm C}^j(l;n)\cup \tilde{\rm E}^j(l;n)$. We enlarge the probability space so that it can measure the event both for $\tau$ and $\tau^*$ and we still denote the probability measure by $\P$.  We denote by ${\rm T}^{*}(\gamma)$ the passage time of a path $\gamma$ for $\tau^*$ and  by ${\rm T}^*(x,y)$ the corresponding first passage time and we write ${\rm T}^*_N={\rm T}^*(0,N\mathbf{e}_1)$. We also denote by $\O^*_N$ the set of all optimal paths for ${\rm T}^*_N$.  Let $A$ be the event defined as
\begin{equation}\label{A1'}
\begin{split}
  A=\{\tau^*\in A_1\}\cap \{
  {\rm B}^j(l;n)\text{ is black for $\tau$ and }\exists\gamma\in \O_N\text{ crosses }{\rm B}^j(l;n)\}.
\end{split}
\end{equation}
\begin{figure}[b]
  \includegraphics[width=6.0cm]{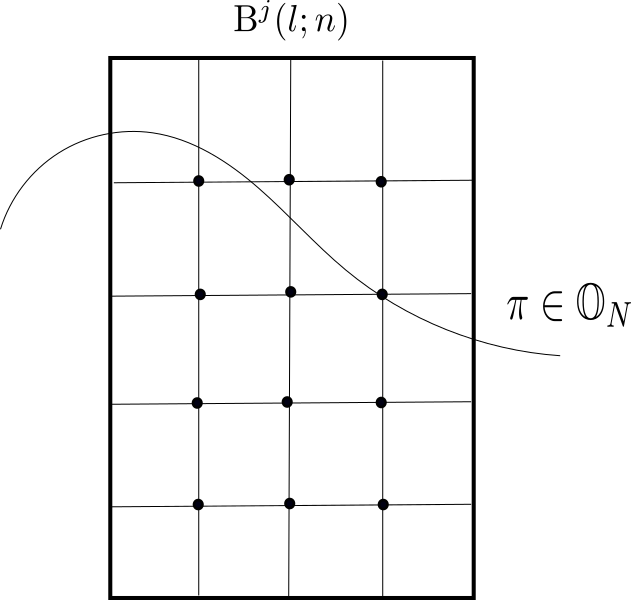}
  \hspace{5mm}
  \includegraphics[width=6.5cm]{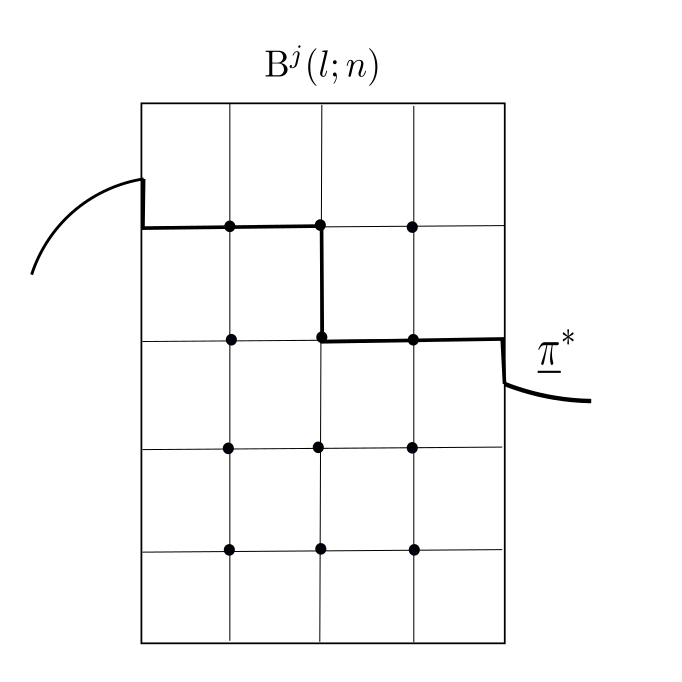}
  \caption{}
Left: $\O_N$ crosses a $n$-box in the short direction.\\
Right: How to construct a new path from $\O_N$.
 \label{fig:three}
\end{figure}
\begin{prop}\label{crucial-prop}
If we take $s$ sufficiently small depending on $M$ and $c$ sufficiently small depending on $s$, then for $N$ large enough, except when $0\in {\rm B}^j(l;n)$ or $N\mathbf{e}_1\in {\rm B}^j(l;n)$, we have
\begin{equation}\label{crucial}
\begin{split}
  &\P\left(\text{$n$-box ${\rm B}^j(l;n)$ is good for $\tau$}\right)\\
  &=\P\left(\text{$n$-box ${\rm B}^j(l;n)$ is good for $\tau^*$}\right)\\
  &\geq\P(A).
\end{split}
\end{equation}

\end{prop}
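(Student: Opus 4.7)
The first equality is immediate from distributional equivalence: the collection $\{\tau^*_e\}_{e\in E^d}$ has the same joint law as $\{\tau_e\}_{e\in E^d}$ (both are i.i.d.\ with marginal $F$), so the two goodness events, being pure functionals of the respective configurations, have equal probability. It therefore suffices to prove the almost-sure inclusion $A\subset\{B^j(l;n)\text{ is good for }\tau^*\}$, whence $\P(A)\leq\P(B^j(l;n)\text{ is good for }\tau^*)$.

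Fix a sample in $A$ and suppose, for contradiction, that some $\pi^*_N\in{\rm Opt}^*_N$ carries no edge $e$ with both endpoints in $B^j(l;n)$ and $\tau^*_e\geq cf_{d,r}(N)$. Every $e\in\tilde{E}^j(l;n)$ has both endpoints in $B^j(l;n)$ (one endpoint lies in the strict interior $C^j(l;n)\setminus\partial B^j(l;n)$, so its neighbour also lies in $B^j(l;n)$) and satisfies $\tau^*_e\geq cf_{d,r}(N)$ by $A_1$. Hence $\pi^*_N$ must avoid $\tilde{E}^j(l;n)$ entirely; equivalently, $\pi^*_N$ does not enter $C^j(l;n)\setminus\partial B^j(l;n)$, because every edge linking this set to its complement lies in $\tilde{E}^j(l;n)$.

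Next, using the crossing path $\pi_N\in{\rm Opt}_N$ provided by $A$, let $v_0,v_1\in\partial B^j(l;n)$ be its first and last hitting points, which by the crossing property satisfy $|v_0-v_1|_1\geq n$. I build a competitor $\gamma^*$ by following $\pi_N$ outside $B^j(l;n)$, attaching to each $v_i$ the boundary path from (Black--2) that reaches some $w_i\in C^j(l;n)\cap\partial B^j(l;n)$, and connecting $w_0$ to $w_1$ inside the skeleton $C^j(l;n)$ by a path of graph length $|w_0-w_1|_1+O(n_1)$ which enters and leaves $C^j(l;n)\setminus\partial B^j(l;n)$ through two $\tilde{E}^j(l;n)$-edges. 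Under $A_1$ one has $\tau^*_e\leq F^-+c$ on $\tilde{C}^j(l;n)\setminus\tilde{E}^j(l;n)$ and $\tau^*_e\leq\gamma cf_{d,r}(N)$ on $\tilde{E}^j(l;n)$, while $\tau^*_e=\tau_e$ elsewhere. Combining these with the (Black--2) cost bound $Mn_1$ for each boundary segment and with the (Black--1) lower bound $t(\pi_N\cap B^j(l;n))\geq(F^-+\delta_7)|v_0-v_1|_1$ for the stretch of $\pi_N$ being replaced, one obtains, for $s,c$ sufficiently small and all large $N$,
\begin{equation*}
t^*(\gamma^*)\leq t(\pi_N)-(\delta_7-c)|v_0-v_1|_1+Cn_1+2\gamma cf_{d,r}(N)\leq t(\pi_N)-\eta n
\end{equation*}
with some $\eta=\eta(\delta_7)>0$.

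To contradict $t^*(\pi^*_N)\leq t^*(\gamma^*)$ it remains to produce the matching lower bound $t^*(\pi^*_N)\geq t(\pi_N)-\eta n/2$. Because $\pi^*_N$ avoids $\tilde{E}^j(l;n)$ and $C^j(l;n)\setminus\partial B^j(l;n)$, the set $I:=\pi^*_N\cap(\tilde{C}^j(l;n)\cup\tilde{E}^j(l;n))$ consists solely of $\tilde{C}^j(l;n)$-edges with both endpoints on $\partial B^j(l;n)$, and a direct geometric count gives $|I|\leq C(s,d)\,n$. Using $F^-\leq\tau^*_e\leq F^-+c$ on each $e\in I$ together with the trivial inequality $t(\pi^*_N)\geq t(0,N\mathbf{e}_1)=t(\pi_N)$, a short manipulation produces
\begin{equation*}
t^*(\pi^*_N)\geq t(\pi_N)-\sum_{e\in I}(\tau_e-F^-).
\end{equation*}
The principal technical obstacle is precisely bounding this excess sum by $\eta n/2$: on the event $\{B^j(l;n)\text{ black for }\tau\}$ the conditions (Black--1)--(Black--2) provide the $\tau$-metric control along $\partial B^j(l;n)$ needed to show that, in combination with the cardinality bound $|I|\leq C(s,d)\,n$ and with $s,c$ chosen sufficiently small (the same choice that made $\eta>0$ in the previous paragraph), the cumulative excess weight $\pi^*_N$ can carry on boundary-highway edges stays below $\eta n/2$. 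Granting this estimate, $t^*(\pi^*_N)>t^*(\gamma^*)$ contradicts the $\tau^*$-optimality of $\pi^*_N$, so no such $\pi^*_N$ exists and $B^j(l;n)$ is good for $\tau^*$ on $A$, which completes the desired inclusion.
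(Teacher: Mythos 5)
Your first equality, the construction of the competitor path $\gamma^*$ through $\partial B^j(l;n)$ and the skeleton $\tilde{C}^j(l;n)\cup\tilde{E}^j(l;n)$, and the estimate $t^*(\gamma^*)\le t(\pi_N)-\eta n$ all match the paper's argument (there $\gamma^*$ is called $\underline{\pi}^*$ and the gain is $\delta_7 n/4$). The divergence, and the gap, is in how you close the contradiction. You try to prove the matching lower bound $t^*(\pi^*_N)\ge t(\pi_N)-\eta n/2$ by controlling the excess $\sum_{e\in I}(\tau_e-F^-)$ over the set $I$ of $\tilde{C}^j(l;n)$-edges of $\pi^*_N$ lying on $\partial B^j(l;n)$, and you explicitly leave this estimate unproved (``granting this estimate''). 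This is a genuine gap, and the route you indicate for filling it does not work: neither (Black--1) nor (Black--2) gives any \emph{upper} bound on the $\tau$-weights of a prescribed collection of boundary edges (in this subsection blackness contains no analogue of the (Black--3) cap used later for $r>d-1$), so nothing in the event $A$ prevents $\sum_{e\in I}(\tau_e-F^-)$ from exceeding $\eta n/2$. Arguing that an optimal $\pi^*_N$ would not use heavy boundary edges is circular, since that is what you are trying to establish.

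The paper's conclusion is softer and avoids the two-sided comparison entirely: from $t^*(0,N\mathbf{e}_1)\le t^*(\underline{\pi}^*)\le t(0,N\mathbf{e}_1)-\delta_7 n/4$ it follows that any $\tau^*$-geodesic must traverse at least one \emph{modified} edge (otherwise $t^*(\pi^*_N)=t(\pi^*_N)\ge t(0,N\mathbf{e}_1)>t^*(0,N\mathbf{e}_1)$, a contradiction); since every modified edge lies in $\tilde{C}^j(l;n)\cup\tilde{E}^j(l;n)$ and can only be reached by first crossing $\tilde{E}^j(l;n)$, the geodesic carries a heavy edge with both endpoints in $B^j(l;n)$. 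Alternatively, your own argument closes once you observe that $I$ is in fact empty: by the construction of $C^j(l;n)$ as a union of axis-parallel lines through $D^j(l;n)$, whose intersections with $\partial B^j(l;n)$ are isolated points at mutual $\ell^1$-distance at least $n_1$, there are no edges of $\tilde{C}^j(l;n)$ with both endpoints on $\partial B^j(l;n)$, so the ``principal technical obstacle'' you identify disappears. As written, however, the proof is incomplete at its decisive step.
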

\begin{proof}
  Since $\tau$ and $\tau^*$ have the same distributions, the equality is trivial. Next, we consider the inequality above. It suffices to show $A\subset \text{\{$n$-box ${\rm B}^j(l;n)$ is good for $\tau^*$}\}$. Let us briefly explain the heuristics behind the proof. Under the event $A$, since a path can pass through the box ${\rm B}^j(l;n)$ with  a smaller passage time than that before resampling, any optimal path must enter the inside of ${\rm B}^j(l;n)$ after resampling. 
  Since we resample the configurations only of $\tilde{\rm E}^j(l;n)$ and $\tilde{C}^j (l;n)$,
   any optimal path must pass on an edge of $\tilde{\rm E}^j (l;n) \cup \tilde{C}^j (l;n)$, in particular an edge of $\tilde{\rm E}^j(l;n)$  as explained below \eqref{oppappi2}.
   By the $A_1$-condition, this implies ${\rm B}^j(l;n)$ is good. Let us make this rigorous.

   Assume that the event $A$ occurs.  To prove \eqref{crucial}, it suffices to show that
  \begin{equation}\label{diff}
    {\rm T}^*_N<\rmT_N.
    \end{equation}
  In fact, since we change configurations only on $\tilde{\rm C}^j(l;n)$ and $\tilde{\rm E}^j(l;n)$, \eqref{diff} yields that any $\pi^*\in \O_N^*$ has to pass through an edge of $\tilde{\rm C}^j(l;n)\cup \tilde{\rm E}^j(l;n)$ at least one time, since otherwise
  $${\rm T}^*_N={\rm T}^*(\pi^*)= \rmT(\pi^*)\geq \rmT_N.$$
  Moreover, in order to enter ${\rm C}^j(l;n)\backslash \partial {\rm B}^j(l;n)$, $\pi^*$ has to pass through an edge of $\tilde{\rm E}^j(l;n)$ at least one time. Therefore ${\rm B}^j(l;n)$ is good for $\tau^*$. This yields \eqref{crucial}.

 Let us prove \eqref{diff}. We take an arbitrary optimal path $\pi\in\O_N$. We construct a new path $\underline{\pi}^{*}$ from $\pi$ to prove \eqref{diff} as follows (see also  Figure \ref{fig:three}). Let $v$ and $w$ be the first intersecting point and the last intersecting point between $\pi$ and ${\rm B}^j(l;n)$, respectively.  Note that $\tau$ and $\tau^*$ are the same on $\partial{}{\rm B}^j(l;n)$. Under the assumption that ${\rm B}^j(l;n)$ is black, we can take $v_1,w_1\in{}\partial {\rm B}^j(l;n)\cap {\rm C}^j(l;n)$ and paths $\underline{\pi}^*_1:v\to{}v_1$ and $\underline{\pi}^*_2:w_1\to{}w{}$ on $\partial{}{\rm B}^j(l;n)$ such that $\max\{\rmT(\underline{\pi}^*_1),\rmT(\underline{\pi}^*_2)\}\le 2d Mn_1$ and $\max\{|v-v_1|_1,|w-w_1|_1\}\leq 2dn_1$. We take a path $\underline{\pi}^*_3\subset{}\tilde{\rm C}^j(l;n)\cup{}\tilde{\rm E}^j(l;n)$ from $v_1$ to $w_1$ such that $\underline{\pi}^*_3$ has exactly two edges in $\tilde{\rm E}^j(l;n)$ and at most $|v_1-w_1|_1+4n_1$ edges in $\tilde{\rm C}^j(l;n)$. For $x,y\in\pi$, we write $\pi|_{x\to y}$ the sub-path of $\pi$ from $x$ to $y$. Finally, we connect $\pi|_{0\to v}$, $\underline{\pi}^*_1$, $\underline{\pi}^*_3$, $\underline{\pi}^*_2$, $\pi|_{w\to N\mathbf{e}_1}$ in this order and let $\underline{\pi}^*$ be the new path. Note that since $\pi$ crosses ${\rm B}^j(l;n)$, $\max\{|x-y|_1|~x,y\in \pi\cap {\rm B}^j(l;n)\}\geq n$. Therefore, by (Black--1),
  \begin{equation}\label{new-passage3}
    \begin{split}
      \rmT(v,w)&\geq{}(\underline{\tau}+\delta_{\ref{lem2}})(|v-w|_1\lor n)\\
      &\geq \underline{\tau}|v-w|_1+\delta_{\ref{lem2}} n/2,
   \end{split}
\end{equation}
  and by $|v_1-w_1|_1\leq 3dn$ and $\sharp \underline{\pi}^*_3\leq |v_1-w_1|+4n_1\leq |v-w|_1+8n_1$, if we take $c$ sufficiently small depending on $s$, then we obtain
\begin{equation}\label{new-passage2}
\begin{split}
  {\rm T}^*(v,w)&\leq {\rm T}^*(\underline{\pi}^*_1)+{\rm T}^*(\underline{\pi}^*_2)+{\rm T}^*(\underline{\pi}^*_3)\\
  &\leq 2d Mn_1+2\kappa{}c{\rm f}_{d,r}(N)+(\underline{\tau}+c)(|v-w|_1+8n_1)+2d Mn_1\\
  &\leq \underline{\tau}|v-w|_1+8d Mn_1.
  \end{split}
\end{equation}

 Since we only resample  the edges in ${\rm B}^j(l;n)$ and the paths $\pi|_{0\to v}$ and $\pi|_{0\to w}$ does not use such edges,  we have
     \begin{equation*}
       \begin{split}
         &{\rm T}_N={\rm T}(0,v)+{\rm T}(v,w)+{\rm T}(w,N\mathbf{e}_1),\\
         &{\rm T}^*(0,v)\leq \rmT(0,v),\\
         &{\rm T}^*(w,N\mathbf{e}_1)\leq \rmT(w,N\mathbf{e}_1).
          \end{split}
     \end{equation*}
     Moreover,  by  the triangular inequality, we get
     $$\rmT^*_N \leq \rmT^*(0,v)+\rmT^*(v,w)+\rmT^*(w,N\mathbf{e}_1).$$
         Thus, using \eqref{new-passage3} and \eqref{new-passage2}, if $s$ is sufficiently small such that $100 d M  n_1\leq \delta_{\ref{lem2}} n$, then this proves
     \begin{equation}\label{new-passage}
       \begin{split}
        \rmT_N -{\rm T}^*_N         &\geq \rmT(0,v)+\rmT(v,w)+\rmT(w,N\mathbf{e}_1)-({\rm T}^*(0,v)+{\rm T}^*(v,w)+{\rm T}^*(w,N\mathbf{e}_1))\\
        &\geq \rmT(v,w)-{\rm T}^*(v,w)\\
        &\geq{} \underline{\tau}|v-w|_1+\delta_{\ref{lem2}} n/2-\underline{\tau}|v-w|_1-8d Mn_1\\
  &\geq \delta_{\ref{lem2}} n/4.
  \end{split}
\end{equation}
 In particular, \eqref{diff} follows.
\end{proof}
By \eqref{oppappi}, if we take $c$ sufficiently small depending on $s$ again, then for $N$ large enough,
\begin{equation*}
  \begin{split}
    \P(\tau^*\in A_1)&\geq \left\{\alpha{}e^{-\beta{}c^{r}(\log{N})^{\frac{r}{r+1}}}\right\}^{|\tilde{\rm E}|}\P\left(\tau^*_e<\underline{\tau}+c\right)^{|\tilde{\rm C}|}\\
&\geq{}N^{-\delta}.
  \end{split}
\end{equation*}
Thus we have
\begin{equation}\label{combining dayo}
\begin{split}
\P(A)\geq N^{-\delta}\P\left(\text{${\rm B}^j(l;n)$ is black for $\tau$ and $\exists\gamma\in\O_N$ crosses ${\rm B}^j(l;n)$}\right).
  \end{split}
\end{equation}
Combined with Proposition~\ref{crucial-prop},  this proves Proposition~\ref{crucial-don}.
\subsection{Lower bound for $r=0$}\label{low:r=0}
We suppose $r=0$, where
${\rm f}_{d,r}(N)=\frac{\log{N}}{\log{\log{N}}}.$ Let $n= [s{\rm f}_{d,0}(N)]$ and $n_1=[s^{1+\frac{1}{2d}}{\rm f}_{d,0}(N)],$ and the other definitions be the same as before. Then the same arguments as in subsection \ref{low:r<d-1} work to prove  Proposition \ref{crucial-prop}. Moreover since $\sharp \tilde{\rm E}^j(l;n)\leq 2d(3n/n_1)^{d-1}(3n)\le s^{1/2}{\rm f}_{d,0}(N)$,  for any $\delta>0$, if $s$ is sufficiently small, then the probability of $\{\tau^*\in A_1\}$ can be bounded from below by
\begin{equation}
\begin{split}
&\left(\left(\alpha{}\frac{c\log{N}}{\log{\log{N}}}\right)^{-\beta}\right)^{|\tilde{\rm E}|}\P\left(\tau^*_e<\underline{\tau}+c\right)^{|\tilde{\rm C}|}\geq{}N^{-\delta}.
  \end{split}
\end{equation}
The rest is the same as before.
\subsection{Lower bound for $r>d-1$}\label{LB2}
We suppose $r>d-1$, where
\begin{equation}\label{order}
  {\rm f}_{d,r}(N)=\begin{cases}
  (\log{N})^{\frac{1}{d}} , &\text{ if }d-1< r< d,\\
      (\log{N})^{\frac{1}{d}}(\log{\log{N}})^{-\frac{1}{d}}, &\text{ if }r= d,\\
  (\log{N})^{\frac{1}{r}} , &\text{ if }d<r.\\
  \end{cases}
\end{equation}
We take $M>0$ sufficiently large and  $s>0$ sufficiently small depending on $M>0$ specified later. Set
$$n=\lf s{\rm f}_{d,r}(N)\rf\text{ and }n_1=2\lf s^2 {\rm f}_{d,r}(N)\rf.$$ Here we have defined $n_1$ to be even so that $n_1/2$ is an integer. We use the same definitions of ${\rm C}^j(l;n),$ $\tilde{\rm C}^j(l;n)$, and ${\rm D}^j(l;n)$ as before. We change the definitions of ${\rm E}^j(l;n)$ and $\tilde{\rm E}^j(l;n)$ ( see Figure~\ref{fig:four}) as
    \al{
{\rm E}^j(l;n)&=\{v\in{}{\rm C}^j(l;n)|~\exists w\in {\rm D}^j(l;n)\text{ s.t. }|v-w|_1=n_1/2\},\\
    \tilde{\rm E}^j(l;n)&=\{\langle v,w \rangle|~v\in {\rm E}^j(l;n),w=v+\mathbf{e}_i\in{}{\rm C}^j(l;n)\text{, $ \exists i\in\{1\cdots,d\}$}\}.
}
    Given $a\in {\rm C}^j(l;n)\backslash {\rm E}^j(l;n)$, let $W_a$ be the connected component of ${\rm C}^j(l;n)\backslash {\rm E}^j(l;n)$ containing $a$, i.e. $W_a={\rm Conn}(a,{\rm C}^j(l;n)\backslash {\rm E}^j(l;n))$ from the notation in Section~\ref{section: notation}. We list the basic properties of $W_a$ and ${\rm C}^j(l;n)$.
\begin{figure}[b]
  \includegraphics[width=7.0cm]{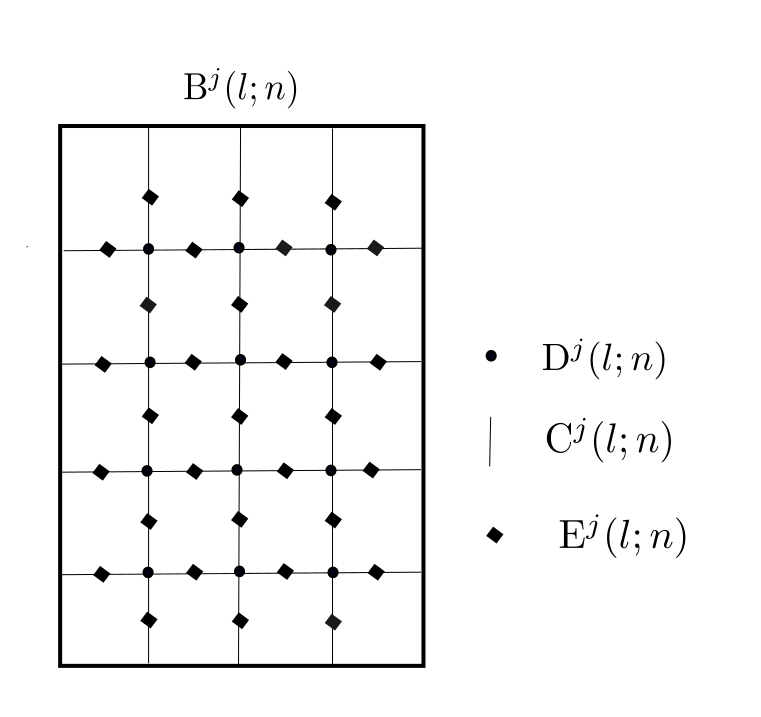}
  \hspace{4mm}
  \includegraphics[width=7.0cm]{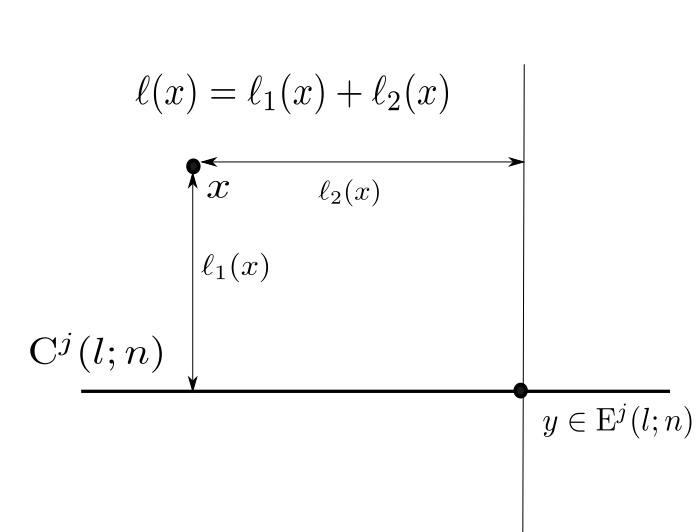}
\caption{}
Left: The figure of ${\rm C}^j(l;n)$, ${\rm D}^j(l;n)$, ${\rm E}^j(l;n)$  for $r>d-1$.\\
Right: The figure of $\ell(x)$, $\ell_1(x)$ and $\ell_2(x)$.
  \label{fig:four}
\end{figure}
    \begin{lem}\label{lem11}
      ~\\
      \begin{enumerate}[(i)]
      \item  For any $a\in {\rm C}^j(l;n)\backslash {\rm E}^j(l;n)$ and $b\in W_a$, there exists a path $\pi=(x_0,\cdots,x_l)$ from $a$ to $b$ which lies only on ${\rm C}^j(l;n)\backslash {\rm E}^j(l;n)$ and $l=|a-b|_1$.\label{prop-Wa}
      \item  For any $a,b\in {\rm C}^j(l;n)$ with $|a-b|_1<n_1/4$, there exists a path $\pi=(x_0,\cdots,x_l)$ from $a$ to $b$ which lies only on ${\rm C}^j(l;n)$ and $l=|a-b|_1$. \label{prop-Cj}
      \item   For any $a\in \partial {\rm B}^j(l;n)$ and $b\in  {\rm C}^j(l;n)$ with $|a-b|_1<n_1/4$, there exists a path $\pi=(x_0,\cdots,x_l)$ from $a$ to $b$ which lies only on ${\rm C}^j(l;n)\cup \partial {\rm B}^j(l;n)$ and $l=|a-b|_1$.
      \item If $|a-b|_1\le n_1/4$ and $W_{a}\neq W_{b}$, then there exists $\langle y_1,y_2\rangle \in \tilde{\rm E}^j(l;n)$ such that $|a-y_1|_1,|b-y_1|_1\leq n_1/4+1$ and a line $L$ including both $y_1$ and $y_2$ also includes both $a$ and $b$.
        \item For any $a\in \partial{\rm B}^j(l;n)$, $\{b\in {\rm C}^j(l;n)|~|b-a|_1<  n_1/2\}$ is a straight line.
        \end{enumerate}
    \end{lem}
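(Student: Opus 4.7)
The lemma records four combinatorial properties of the grid-like set $C^j(l;n)$, viewed as a union of axis-parallel line segments through the grid points of $D^j(l;n)$, which are spaced $n_1$ apart. The unifying idea is that the hypothesis $|a-b|_1 \le n_1/4$ is small compared to the lattice spacing $n_1$, and this rigidifies the configuration of $a$ and $b$: since any two distinct grid points of $D^j(l;n)$ are at $\ell_\infty$-distance at least $n_1$ from each other, whenever $a$ and $b$ are forced to lie near different grid points the geometric options collapse to at most one.

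For part (ii), I would argue by cases on the axis lines of $C^j$ containing $a$ and $b$. If a single line contains both points, the straight segment is an $\ell_1$-geodesic in $C^j$. Otherwise each of $a,b$ lies on a unique axis line through a unique grid point in $D^j(l;n)$, and the bound $|a-b|_1 < n_1/4$ together with the grid spacing $n_1$ forces these two grid points to coincide, call it $v_0$. The two lines then meet at $v_0$, and either $a \to v_0 \to b$ itself, or an $\ell_1$-shortest corner-turning variant sitting inside $C^j$, provides the desired geodesic of length $|a-b|_1$. Part (iii) is a short extension: given $a \in \partial B^j(l;n)$, I first move along the face $\partial B^j(l;n)$ from $a$ to its projection $a'$ onto the nearest axis line of $C^j$ relative to $b$, then apply part (ii) to the pair $(a',b)$; the assumption $|a-b|_1<n_1/4$ keeps $a'$ well-defined in $\partial B^j(l;n)\cap C^j$ and ensures no backtracking, so the concatenation has length $|a-b|_1$ in $C^j \cup \partial B^j(l;n)$.

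For part (i), the structural content is that each connected component $W_a$ of $C^j \setminus E^j$ is $\ell_1$-convex. I would unpack the defining condition of $E^j$ to show that its vertices along each axis segment of $C^j$ occur at specific positions dictated by the numerical value $n_1/2$, namely the midpoints of the $n_1$-long segments between consecutive grid points of $D^j$. Removing these cut-vertices leaves $W_a$ either as a single open half-segment or as a star-shaped union of at most $2d$ half-segments meeting at a common grid point $v_0 \in D^j$; in either description $W_a$ is $\ell_1$-convex and a geodesic $a \to b$ of length $|a-b|_1$ inside $W_a$ is obtained by routing through $v_0$ when needed. Pinning down this explicit combinatorial description of $E^j$ (and hence of $W_a$) is the main technical obstacle of the lemma; once it is in place, the other three parts reduce to elementary case analyses.

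For part (iv), the requirement that a common axis line $L$ contain all four points $a,b,y_1,y_2$ already forces $a$ and $b$ to be collinear on some line of $C^j$. The $L$-segment between $a$ and $b$ has length at most $n_1/4$ and lies in $C^j$. If this segment were disjoint from $E^j$, then $a$ and $b$ would sit in a common connected component of $C^j \setminus E^j$, contradicting $W_a \neq W_b$; hence there exists $y_1 \in L \cap E^j$ with $\max(|a-y_1|_1,|b-y_1|_1) \le |a-b|_1 \le n_1/4$. Taking $y_2 := y_1 + \mathbf{e}_{\mathrm{dir}(L)}$, which lies in $C^j$ whenever $y_1$ is at least one step interior to $L$ (this being the source of the $+1$ slack in the statement), produces the required edge $\langle y_1,y_2\rangle \in \tilde{E}^j(l;n)$ collinear with $a$ and $b$.
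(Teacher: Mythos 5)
Your arguments for parts (i)--(iii) take essentially the same route as the paper, which proves (i) and (ii) by observing that each $W_a$ is contained in the coordinate ``star'' of a single vertex $x\in(D^j(l;n)\cup\partial B^j(l;n))\cap W_a$ and that $\{y\in C^j(l;n):|a-y|_1<n_1/4\}$ stays within one such star, and then simply asserts that (iii) and (iv) follow from the construction. The extra detail you supply for (iii) is correct. The gap is in part (iv). You write that ``the requirement that a common axis line $L$ contain all four points $a,b,y_1,y_2$ already forces $a$ and $b$ to be collinear on some line of $C^j$,'' and then proceed as if the collinearity were given. But that collinearity must be \emph{derived} from the hypotheses $|a-b|_1\le n_1/4$ and $W_a\neq W_b$; it cannot be read off from the conclusion you are trying to establish. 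The derivation is exactly the dichotomy you already set up for (ii): if some $x\in D^j(l;n)$ had $|a-x|_1<n_1/4$, then $|b-x|_1\le|a-x|_1+|a-b|_1<n_1/2$, so both $a$ and $b$ would lie strictly inside the star at $x$ on the near side of every $E^j$-cut point, forcing $W_a=W_b$ and contradicting the hypothesis. Hence $a$ is at $\ell_1$-distance at least $n_1/4$ from every grid point of $D^j(l;n)$, and by your own case analysis for (ii), every point of $C^j(l;n)$ within $\ell_1$-distance $n_1/4$ of $a$ — in particular $b$ — must lie on the single axis line through $a$. Once that step is inserted, the remainder of your argument for (iv), locating $y_1\in E^j(l;n)$ on the segment between $a$ and $b$ and setting $y_2=y_1\pm\mathbf{e}_{\mathrm{dir}(L)}$, goes through.
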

    \begin{proof}
      (i) It is easy to see that for any connected component, namely $W_a$, there exists $x\in ({\rm D}^j(l;n)\cup\partial {\rm B}^j(l;n))\cap W_a$  such that
      $$W_a\subset \{x+k\mathbf{e}_i|~ i\in\{1,\cdots,d\},~k\in\Z^d\}.$$
      This yields (i).\\

      (ii) Fix $a\in {\rm C}^j(l;n)$. If there exists $x\in {\rm D}^j(l;n)$ such that $|a-x|_\infty<n_1/4$, then since
      $$\{y\in {\rm C}^j(l;n)|~|a-y|_1<n_1/4\}\subset \{x+k\mathbf{e}_i|~ i\in\{1,\cdots,d\},~k\in\Z^d\},$$
      the claim holds. Otherwise, 
      $\{y\in {\rm C}^j(l;n)|~|a-y|_1<n_1/4\}$ is a subset of a straight line.
      Since $\{y\in {\rm C}^j(l;n)|~|a-y|_1<n_1/4\}$ is connected, (ii) holds.\\

      (iii),(iv),(v) They follow from the construction of ${\rm C}^j(l;n)$ directly. 
      \end{proof}

    Given $x\in\Z^d$, we define $\ell(x)={\rm d}_1(x,{\rm E}^j(l;n))$ and given $\langle x,y\rangle\in {\rm E}^d$, $\ell(\langle x,y\rangle)=\inf\{\ell(x),\ell(y)\}$.

    \begin{lem}\label{number-est}
      There exists $C(s)>0$ such that for any $0\leq \ell\leq 2dn_1$,
      \begin{equation}
        \sharp\{e\in {\rm E}^d|~\ell(e)=\ell\}\leq  C(s)(\ell+1)^{d-1}.
      \end{equation}
      If $\ell>2dn_1$, then
      \begin{equation}\label{2-0}
        \sharp\{e\in {\rm E}^d|~\ell(e)=\ell,~e\subset {\rm B}^j(l;n)\}=0.
      \end{equation}
    \end{lem}
    \begin{proof}
 We begin with the case $\ell\leq 2d n_1$.    Since there exists $C_1(s)>0$ such that $\sharp {\rm E}^j(l;n)\leq C_1(s)$, we have
    \begin{equation*}
      \begin{split}
      \sharp\{x\in\Z^d|~\ell(x)=\ell\}&\leq \sharp \{x\in\Z^d|~\exists y\in {\rm E}^j(l;n)\text{ s.t. }|x-y|_1=\ell\}\\
      &\leq C_1(s)\sharp\{x\in \Z^d|~|x|_1=\ell\}.
      \end{split}
    \end{equation*}
    By $\sharp\{x\in \Z^d|~|x|_1=\ell\}\leq d^d(\ell+1)^{d-1}$, we have $$\sharp\{x\in\Z^d|~\ell(x)=\ell\}\leq d^d C_1(s)(\ell+1)^{d-1}.$$
    In particular, we get $\sharp\{e\in {\rm E}^d|~\ell(e)=\ell\}\leq  2d^{d+1}C_1(s)(\ell+1)^{d-1}$ as desired. If $ \ell>2dn_1$, then \eqref{2-0} is trivial due to the way of construction.

    \end{proof}



 Unlike the case $r<d-1$, when $r>d-1$, $\P(\tau\in A_1)$ decays faster than any polynomial, which implies that the lower bound from $A_1$--condition is not appropriate in this case. Hence, we need to consider a different  condition.
\begin{Def}\label{Def:A2}
 We say that the collection $\tau=\{\tau_e\}_{e\in {\rm E}^d}$ satisfies $A_2$--condition if 
 $$ \begin{cases}
  c^2{\rm f}_{d,r}(N)\leq{}\tau_e\leq{}\kappa{}c^2{\rm f}_{d,r}(N), & \text{if
$e\in{}\tilde{\rm E}^j(l;n)$}, \\
  \tau_e\leq{}\underline{\tau}+c^2, &\text{if
$e\in\tilde{\rm C}^j(l;n)\backslash{}\tilde{\rm E}^j(l;n)$},\\
  \tau_e\geq{}\frac{c{\rm f}_{d,r}(N)}{\ell(e)+1}\lor{}M^2, &\text{if
$e\subset{}\iota({\rm B}^j(l;n))$ and $e\notin\tilde{\rm C}^j(l;n)$},\\
  \tau_e\geq{}(\log{N})^{\frac{1}{2d(r+1)}},& \text{if $e\cap\partial{}{\rm B}^j(l;n)\neq{}\emptyset$, $e\cap \iota({\rm B}^j(l;n))\neq{}\emptyset$, $e\notin\tilde{\rm C}^j(l;n)$,}
  \end{cases}$$
where recall that $\iota(B)=B\backslash \partial B$. If $\tau$ satisifes $A_2$--condition, we write $\tau\in A_2$.
\end{Def}
 Since an edge weight on $\iota({\rm B}^j(l;n))\backslash {\rm C}^j(l;n)$ is high under $A_2$--condition, an optimal path is more likely to lie on ${\rm C}^j(l;n)$ while crossing ${\rm B}^j(l;n)$.

A resampled configuration $\tau^*=\{\tau^*_e\}_{e_\in {\rm E}^d}$ is taken to be $\tau^*_e=\tau_e$ if $e\cap \iota({\rm B}^j(l;n))= \emptyset$  and an independent copy of $\tau_e$ if $e\cap \iota({\rm B}^j(l;n))\neq \emptyset$. We define the event $A$ as
\begin{equation}\label{A1'}
\begin{split}
  A=\{\tau^*\in A_2\}\cap \{
  {\rm B}^j(l;n)\text{ is black for $\tau$ and }\exists\gamma\in \O_N\text{ crosses }{\rm B}^j(l;n)\}.
\end{split}
\end{equation}
\begin{prop}\label{prob A2}
  For any $\delta>0$, if $s>0$ is sufficiently small depending on $M$ and $c>0$ is sufficiently small depending on $s$, then
    $$\P\left(\tau\in A_2\right)\geq{}N^{-\delta}.$$
    \end{prop}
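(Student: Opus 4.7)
The plan is to exploit the independence of $\{\tau^*_e\}$ across the edges constrained by the four clauses of $A_2$: by construction, $\tau^*$ differs from $\tau$ only on edges incident to $\iota(B^j(l;n))$, and the four clauses partition those edges into essentially disjoint classes (up to boundary overlaps), so $\P(\tau^*\in A_2)$ factorizes. I will bound each of the four factors below by $N^{-\delta/4}$; the constants are to be fixed in the order $M\to s\to c$.

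For the first clause, assumption (3) of Theorem \ref{thm3} gives $\P(c^2 f_{d,r}(N)\leq \tau^*_e\leq \gamma c^2 f_{d,r}(N))\geq \beta e^{-\alpha(c^2 f_{d,r}(N))^r}$ per edge. The key observation is that $|\tilde E^j(l;n)|\leq C(s)$ depends only on $s,d$: indeed $\tilde E^j$ sits near the $n_1$-grid $D^j$ whose cardinality is $O((n/n_1)^d)=O((s/s_1)^d)$, a function of $s$ alone. The factor is therefore $\geq\exp(-C(s)c^{2r}f_{d,r}(N)^r)\geq N^{-\delta/4}$ once $c$ is small, using $f_{d,r}(N)^r\lesssim \log N$ in all regimes $r\geq d-1$. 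For the second clause, the usefulness of $F$ ensures $p(c)=\P(\tau_e<F^-+c^2)>0$, and $|\tilde C^j(l;n)|\leq C(s)f_{d,r}(N)=o(\log N)$, giving $\geq p(c)^{C(s)f_{d,r}(N)}\geq N^{-\delta/4}$.

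The hard part is the third clause, which constrains every interior edge outside $\tilde C^j$ by $\tau_e\geq cf_{d,r}(N)/(\ell(e)+1)\vee M^2$. I would use the counting lemma $\#\{e:\ell(e)=\ell\}\leq C(s)(\ell+1)^{d-1}$ (vanishing for $\ell>2dn_1$) and split at $\ell^*=cf_{d,r}(N)/M^2$. For $\ell\leq\ell^*$, assumption (3) gives $\P(\tau_e\geq cf/(\ell+1))\geq \beta e^{-\alpha(cf/(\ell+1))^r}$, so summing yields an exponent
\begin{equation*}
C(s)\alpha c^r f_{d,r}(N)^r\sum_{\ell=0}^{\ell^*}(\ell+1)^{d-1-r},
\end{equation*}
whose inner sum is $O(1)$ if $r>d$, $O(\log f_{d,r}(N))$ if $r=d$, and $O((\ell^*)^{d-r})$ if $d-1<r<d$. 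A direct comparison with the explicit form of $f_{d,r}$ in each regime shows the total is $\leq\delta\log N/4$ once $c$ is small. For $\ell>\ell^*$ the constraint reduces to $\tau_e\geq M^2$ with constant probability $q(M)\in(0,1)$, and integrating the counting lemma bounds the number of such edges by $C(s)n_1^d\leq C(s)s_1^d f_{d,r}(N)^d$; since $f_{d,r}(N)^d\lesssim\log N$ and $s_1=s^{1+d/(8r)}$, choosing $s$ small after $M$ makes this subfactor $\geq N^{-\delta/4}$.

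The fourth clause is the easiest: the $O(n^{d-1})$ boundary-crossing edges each satisfy $\P(\tau^*_e\geq(\log N)^{1/(2dr)})\geq \beta e^{-\alpha(\log N)^{1/(2d)}}$, and since $n^{d-1}(\log N)^{1/(2d)}\lesssim (\log N)^{(d-1)/d+1/(2d)}=(\log N)^{1-1/(2d)}=o(\log N)$, this factor is automatically $\geq N^{-\delta/4}$ for $N$ large. Multiplying the four bounds produces $\P(\tau\in A_2)\geq N^{-\delta}$. The principal subtlety lies in the third clause, where the sign of $d-1-r$ flips across the three regimes and where the constants must be fixed in the order $M\to s\to c$ to control both the exponential-tail edges close to $E^j$ and the constant-probability edges far from $E^j$ simultaneously.
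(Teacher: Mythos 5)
Your proposal is correct and follows essentially the same route as the paper: factorize over the (disjoint) edge classes of the four clauses using independence, bound the $\tilde{E}$ and $\tilde{C}$ factors by $|\tilde{E}|\le C(s)$ and $|\tilde{C}|\le C(s)f_{d,r}(N)=o(\log N)$, control the interior clause via the counting lemma and the sum $\sum_{\ell}(\ell+1)^{d-1-r}$ in the three regimes together with the $q(M)^{O(n^d)}$ factor absorbed by taking $s$ small after $M$, and note the boundary clause is automatically $\exp(-o(\log N))$. Your split at $\ell^*=cf_{d,r}(N)/M^2$ is just a cosmetic variant of the paper's use of $\P(\tau_e>a\lor b)\ge\P(\tau_e>a)\P(\tau_e>b)$ applied to every interior edge, and the order of constants $M\to s\to c$ matches the paper exactly.
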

\begin{proof}
  By the fact $\P\left(\tau_e>a \lor b\right)\ge \P\left(\tau_e>a\right)\P\left(\tau_e>b\right)$ for $a,b\ge 0$ and Lemma~\ref{number-est}, we have
    \begin{equation*}
\begin{split}
  \P\left(\tau\in A_2\right) &\geq{}\left(\alpha e^{-\beta
c^{2r}{\rm f}_{d,r}(N)^r}\right)^{|\tilde{\rm E}|}\P\left(\tau_e\le
  \underline{\tau}+c^2\right)^{|\tilde{\rm C}|}(1\land{}\alpha)^{2d|{\rm B}|}\prod^{2dn_1}_{k=1}\left(e^{-\beta\left(\frac{c{\rm f}_{d,r}(N)}{k}\right)^r}\right)^{C(s)k^{d-1}}\\
  &\hspace{20mm}\cdot\P\left(\tau_e>M^2\right)^{2d|{\rm B}|}
  \left(\alpha e^{-\beta (\log{N})^{\frac{1}{2d}}}\right)^{2d|\partial{}{\rm B}|}\\
  &\ge \exp{\left(-\frac{\delta\log{N}}{2}\right)}\exp{\left(-\beta C(s)\sum^{2dn_1+1}_{k=1}c^r\frac{{\rm f}_{d,r}(N)^r}{k^{r-d+1}}\right)}\\
  &\ge \exp{(-\delta\log{N})}\ge N^{-\delta},
  \end{split}
\end{equation*}
    where we have used $({\rm f}_{d,r}(N))^d,\,(\log{N})^{1/2d}({\rm f}_{d,r}(N))^{d-1}\leq \log{N}$ since $r>d-1$.
    \end{proof}
\begin{prop}\label{crucial-prop2}
  For $M$ large enough, if $s$ is sufficiently small and $c$ is sufficiently small depending on $s$, then for $N$ large enough, unless $0\in {\rm B}^j(l;n)$ or $N\mathbf{e}_1\in {\rm B}^j(l;n)$, on the event $A$, any $\pi^*\in\O^*_N$
does not touch ${\rm B}^j(l;n)\backslash(\partial{}{\rm B}^j(l;n)\cup{}{\rm C}^j(l;n))$
and passes on an edge of $\tilde{\rm E}^j(l;n)$ at least one time.
    \end{prop}

    \begin{proof}
      By the same way as in the case $r<d-1$, we have
      $\rmT_N>{\rm T}^*_N$, and  any $\pi^*\in\O^*_N$ has to
enter inside ${\rm B}^j(l;n)$. We take an arbitrary optimal path $\pi^*\in \O^*_N$ and write $\pi^*=\{x_1,\cdots,x_{K}\}$. By assuming that there exists $k\in\{1,\cdots,K\}$ such that $x_k\in ({\rm B}^j(l;n)\backslash(\partial{}{\rm B}^j(l;n)\cup{}{\rm C}^j(l;n)))$, we shall derive a contradiction. 

We define $p=\max\{l\le k|~x_l\in \partial{}{\rm B}^j(l;n)\cup{}{\rm C}^j(l;n)\}$
and
$q=\min\{l\ge k|~x_l\in \partial{}{\rm B}^j(l;n)\cup{}{\rm C}^j(l;n)\}$.
Note that $q-p\ge 1$. Set $a=x_p$ and $b=x_q$. Define $C(s)>0$ so that $\sharp {\rm E}^j(l;n)\leq C(s)$.\\

\noindent\underline{Step 1} ($a,b \in {\rm C}^j(l;n)$): Suppose $a\in\partial{}{\rm B}^j(l;n)$ and we shall derive a contradiction. Since $\pi^*|_{a\to b}$ has to pass on an edge whose weight is at least $(\log{N})^{{\frac{1}{2d(r+1)}}}$ and
passes only on ${\rm B}^j(l;n)\backslash(\partial {\rm B}^j(l;n)\cup {\rm C}^j(l;n))$
except for the starting and ending points, we have
\begin{equation}\label{6.5}
{\rm T}^*(a,b)\ge (\log{N})^{{\frac{1}{2d(r+1)}}}+(|a-b|_1-1)M^2.
\end{equation}
 If $|a-b|_1 \le n_1/4$, by Lemma \ref{lem11}--(iii) and (Black-2), then there exists a path $\gamma:a\to{}b$ on
$\partial{}{\rm B}^j(l;n)\cup{}{\rm C}^j(l;n)$ such that
 $${\rm T}^*(\gamma)<M\left((\log{N})^{\frac{1}{4d(r+1)}}+|a-b|_1\right)<{\rm T}^*(a,b),$$
 which is  a contradiction.

 On the other hand, if $|a-b|>n_1/4$, by $A_2$--condition, then for $c$ small enough, we can take a path $\gamma:a\to{}b$ on $\partial{}{\rm B}^j(l;n)\cup{}{\rm C}^j(l;n)$ 
so that
\begin{equation*}
  \begin{split}
    {\rm T}^*(\gamma)&<M|a-b|_1+C(s)c^2\kappa {\rm f}_{d,r}(N)\\
    &<(\log{N})^{{\frac{1}{2d(r+1)}}}+(|a-b|_1-1)M^2\\
    &\leq {\rm T}^*(a,b),
  \end{split}
\end{equation*}
which is also a contradiction. Thus, we have $a\in{}{\rm C}^j(l;n)$. Similarly, we get $b\in{}{\rm C}^j(l;n)$. \\

\noindent\underline{Step 2} ($|a-b|_1\leq n_1/4$):
Note that by the same reason of \eqref{6.5},
${\rm T}^*(a,b)\ge |a-b|_1 M^2.$ Take a path $\gamma:a\to{}b$ on
${\rm C}^j(l;n)$ such that
${\rm T}^*(\gamma)\leq{}(\underline{\tau}+c^2)(|a-b|_1+4n_1)+C(s)\kappa c^2{\rm f}_{d,r}(N)$. It follows that
\begin{equation*}
  \begin{split}
    M^2|a-b|_1&\leq{}{\rm T}^*(a,b)\\
    &=\sum^{q-1}_{i=p}\tau_{\langle x_i,x_{i+1} \rangle}\\
    &\le (\underline{\tau}+c^2)(|a-b|_1+4n_1)+C(s)\kappa c^2{\rm f}_{d,r}(N),    
  \end{split}
\end{equation*}

which leads to $$|a-b|_1\leq \frac{4(\underline{\tau}+c^2)n_1+C(s)\kappa c^2{\rm f}_{d,r}(N)}{(M^2-\underline{\tau}-c^2)n_1}n_1.$$
If we take $M>0$ sufficiently large, then it follows that $|a-b|_1\leq n_1/4$.\\

\noindent\underline{Step 3} ($W_a\neq W_b$):
When $a$ and $b$ both belong to the same connected component, i.e. $W_a=W_b$, by Lemma \ref{lem11}--(i), we can take a path $\gamma:a\to{}b$
on ${\rm C}^j(l;n)$ such that
\begin{equation*}
  \begin{split}
    {\rm T}^*(\gamma)&\leq{}(\underline{\tau}+c^2)|a-b|_1\\
    &<M^2|a-b|_1\leq{}{\rm T}^*(a,b),
     \end{split}
\end{equation*}
which is also a contradiction. Thus $W_a\neq W_b$.\\

\noindent\underline{Step 4} (Conclusion):
It follows from Lemma \ref{lem11}--(ii) that we can take a path $\gamma:a\to{}b$ on ${\rm C}^j(l;n)$ such that
\ben{\label{base ineq}
  {\rm T}^*(a,b)\leq {\rm T}^*(\gamma)\leq{}c^2\kappa {\rm f}_{d,r}(N)+(\underline{\tau}+c^2)|a-b|_1.
  }
By Lemma \ref{lem11}--(iv), the line between $a$ and $b$ lies on ${\rm C}^j(l;n)$ and it includes exactly one vertex of ${\rm E}^j(l;n)$ between $a$ and $b$, say $x$. If there exists $k\in \Iintv{p, q}$ such that $x_k\in\partial {\rm B}^j(l;n)$, then  $\max\{|a-x_k|_1,|b-x_k|_1\}\ge n_1/4$ and by \eqref{base ineq},

\begin{equation*}
  \begin{split}
    M^2n_1/4&\leq {\rm T}^*(a,b)\\
    &<c^2\kappa {\rm f}_{d,r}(N) +(\underline{\tau}+c^2)|a-b|_1\\
    &\le c^2\kappa {\rm f}_{d,r}(N) +(\underline{\tau}+c^2)n_1/4,
\end{split}
  \end{equation*}
which contradicts that $M$ is sufficiently large. Thus for any $k\in \Iintv{p, q}$, $x_k\in \iota({\rm B}^j(l;n))$. 

Since $$|x_k-x|_1\leq |x_k-a|_1+|x_k-b|_1\leq q-p\text{ for any $p<k<q$},$$
we have $\ell(\langle x_k,x_{k+1} \rangle)\leq q-p$. This yields  
\begin{equation*}
  \begin{split}
    {\rm T}^*(a,b)&\leq (\underline{\tau}+c^2)|a-b|_1+c^2\kappa {\rm f}_{d,r}(N)\\
    &< M^2|a-b|_1 \lor\sum^{q-1}_{i=p}\frac{c{\rm f}_{d,r}(N)}{q-p+1}\\
    &\leq{}\sum^{q-1}_{i=p}\left(M^2\lor \frac{c{\rm f}_{d,r}(N)}{\ell(\langle x_i,x_{i+1} \rangle)+1}\right)\\
    &\leq \sum^{q-1}_{i=p}\tau^*_{\langle x_i,x_{i+1} \rangle}={\rm T}^*(a,b),
  \end{split}
  \end{equation*}
where we have used \eqref{base ineq} in the first inequality, $q-p\geq |a-b|_1$ and $ \ell(\langle x_k,x_{k+1} \rangle)\leq q-p$ in the third inequality, and $A_2$--condition in the last inequality. It leads to a contradiction. Therefore, we conclude that such $x_k$ does not exist and any  $\pi^*\in \O_N^*$ does not touch ${\rm B}^j(l;n)\backslash(\partial{}{\rm B}^j(l;n)\cup{}{\rm C}^j(l;n))$.\\

Finally, we show that any optimal path passes through an edge of $\tilde{\rm E}^j(l;n)$. Since any optimal path $\pi^*\in \O_N^*$ does not touch
${\rm B}^j(l;n)\backslash(\partial{}{\rm B}^j(l;n)\cup{}{\rm C}^j(l;n))$, if $\pi^*$ does not touch $\{x\in {\rm C}^j(l;n)|~x\not\sim_{{\rm C}^j(l;n)\backslash {\rm E}^j(l;n)} \partial {\rm B}^j(l;n)\}$, then $\pi^*\cap \iota({\rm B}^j(l;n))^c$ is also a path and we can take an optimal path so that it does not enter inside ${\rm B}^j(l;n)$, which contradicts $\rmT_N>\rmT_N^*$ mentioned at the beginning of the proof. Thus, $\pi^*$ touches $\{x\in {\rm C}^j(l;n)|~x\not\sim_{{\rm C}^j(l;n)\backslash {\rm E}^j(l;n)} \partial {\rm B}^j(l;n)\}$  and needs to pass on at least one edge of 
$\tilde{\rm E}^j(l;n)$.
    \end{proof}
 Proposition~\ref{crucial-prop2}   implies that on the event $A$, ${\rm B}^j(l;n)$ is good for $\tau^*$. Hence, we obtain \eqref{crucial}.
 Together with Proposition~\ref{prob A2}, as in \eqref{combining dayo}, we conclude Proposition~\ref{crucial-don}.
    \subsection{Lower bound for $r=d-1$ with $d\geq{}3$}
We consider the case $r=d-1$ with $d\geq 3$, where ${\rm f}_{d,d-1}(N)=(\log{N})^{\frac{1}{d}}(\log{\log{N}})^{\frac{d-2}{d}}$.    We take $s>0$ and $M\in\N$ to be chosen later. We define $n$, $n_1$ as in Section~\ref{LB2}. We use the same notation as in subsection \ref{LB2}.   
    
    We define (See Figure~\ref{fig:ell})
    $${\rm F}^j(l;n)=\{v\in {\rm B}^j(l;n)\backslash(\partial{}{\rm B}^j(l;n)\cup{}{\rm C}^j(l;n))|~ \exists w\in {\rm C}^j(l;n)\text{ s.t. }|v-w|_1<{}(\log{N})^{\frac{1}{4d^2}}\}.$$
    Given $x\in \Z^d$, we define $\ell_1(x)$ and $\ell_2(x)$ as follows (See Figure~\ref{fig:four}):
    $$\ell_1(x)= {\rm d}_1(x,{\rm C}^j(l;n)),~\ell_2(x)={\rm d}_1(x,{\rm E}^j(l;n))_1-\ell_1(x).$$
    Given an edge $e=\langle x, y\rangle$, we define $\ell_1(e)$ and $\ell_2(e)$ as  $$\ell_1(e)=\ell_1(x)\land \ell_1(y),~\ell_2(e)=\ell_2(x)\land \ell_2(y).$$
    
    \begin{figure}[b]
      \includegraphics[width=7.3cm]{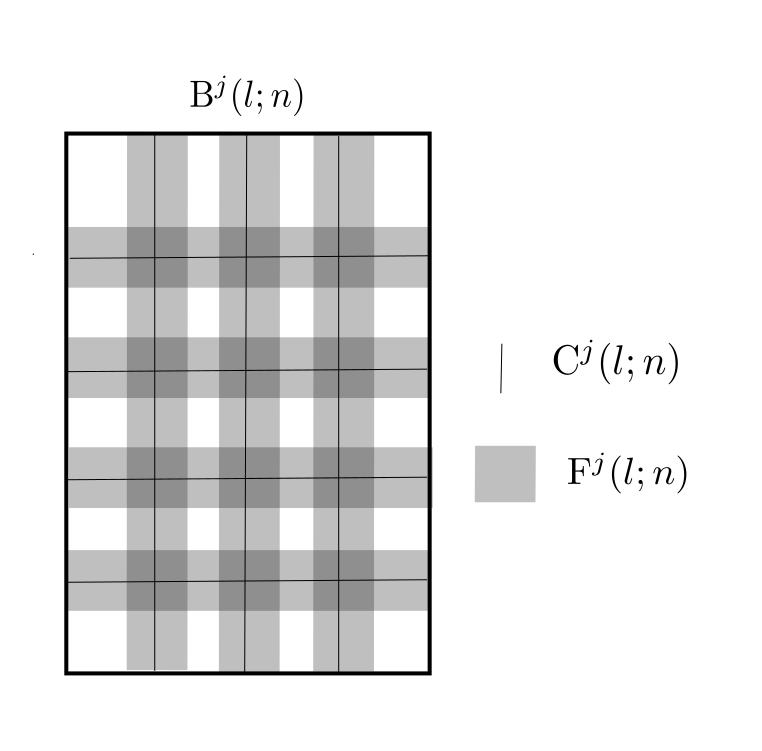}\hspace{4mm}
       \includegraphics[width=7.0cm]{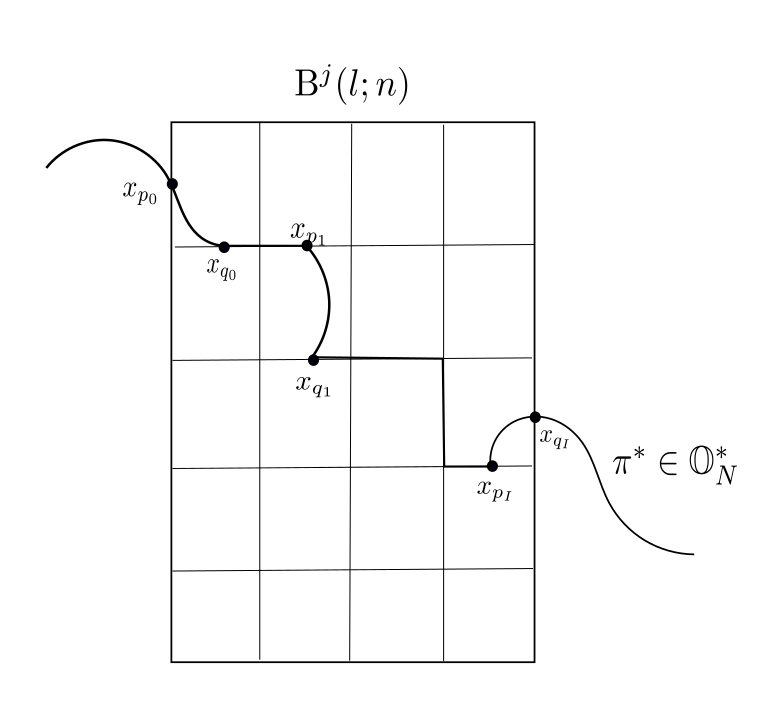}
       \caption{}
       Left: The figure of ${\rm F}^j(l;n)$.\\
Right:       The image of the proof of Proposition \ref{crucial-prop3}.

  \label{fig:ell}
\end{figure}
\begin{lem}\label{estimate-F}
  There exists a positive constant $C(s)$ such that if $0\le \ell\leq{}(\log{N})^{\frac{1}{4d^2}}$ and $0\le k \leq 2n_1$, then
  \begin{equation}\label{121}
    \sharp\{e\in {\rm E}^d|~e\cap {\rm F}^j(l;n)\neq\emptyset,~\ell_1(e)=\ell,\,\ell_2(e)=k\}\leq C(s)(\ell+1)^{d-2}.
     \end{equation}
  Otherwise, if $\ell>(\log{N})^{\frac{1}{4d^2}}$ or $k> 2n_1$, then
  \begin{equation}\label{122}
    \sharp\{e\in {\rm E}^d|~e\cap {\rm F}^j(l;n)\neq\emptyset,~\ell_1(e)=\ell,\,\ell_2(e)=k\}=0.
    \end{equation}
\end{lem}
\begin{proof}
  Let $L=\{k\mathbf{e}_1|~k\in\Z\}$.   The inequality \eqref{121} follows from 
  \begin{equation*}\begin{split}
    \sharp\{e\in {\rm E}^d|~\ell_2(e)=k,~\ell_1(e)=\ell\}&\leq 2d (\sharp {\rm E}^j(l;n))\sharp\{v\in\Z^d|~|v_1|=k,~|v|^{(2)}_1=\ell\}\\
    &\leq C(s)(\ell+1)^{d-2},
    \end{split}\end{equation*} 
  where we define $|v|^{(2)}_1=|v|_1-|v_1|$.
 By the definition of ${\rm F}^j(l;n)$, \eqref{122} is trivial.
  \end{proof}
When $r>d-1$, we do subject conditions on all weights in ${\rm B}^j(l;n)$. However, when $r=d-1$, since $({\rm f}_{d,r}(N))^d\gg (\log{N})$, we cannot do that. Hence, we need to consider an even weaker condition.  This makes the proof more complicated.
\begin{Def}\label{Def:A3}
We say that the collection $\tau=\{\tau_e\}_{e\in {\rm E}^d}$ satisfies $A_3$--condition if 
 \begin{equation}\label{A3-cond}
    \begin{cases}
  c^2{\rm f}_{d,d-1}(N)\leq{}\tau_e\leq{}\kappa{}c^2{\rm f}_{d,d-1}(N), & \text{if $e\in{}\tilde{\rm E}^j(l;n)$} \\
  \tau_e\leq{}\underline{\tau}+c^2, &\text{if $e\in\tilde{\rm C}^j(l;n)\backslash{}\tilde{\rm E}^j(l;n)$}\\
  \tau_e\geq{}\frac{c{\rm f}_{d,d-1}(N)}{(\ell_1(e)+1)\log{(\ell_2(e)+2)}}&\text{if $e\cap{}F_j(l;n)\neq\emptyset$ and $e\cap{}\partial{}B_j(l;n)=\emptyset$}\\
  \tau_e\geq{}(\log{N})^{\frac{1}{2d^2}},& \text{if $e\cap\partial{}{\rm B}^j(l;n)\neq{}\emptyset$, $e\not\subset\partial{}{\rm B}^j(l;n))$, $e\notin\tilde{\rm C}^j(l;n)$.}
    \end{cases}
 \end{equation}
  If $\tau$ satisifes $A_3$--condition, then we write $\tau\in A_3$.
\end{Def}
We note that under $A_3$--condition, for $e\in {\rm E}^d$ with $e\cap{}F_j(l;n)\neq\emptyset$ and $e\cap{}\partial{}B_j(l;n)=\emptyset$, $\tau_e\geq M^2$ for $N$ large enough. A resampled configuration $\tau^*=\{\tau^*_e\}_{e_\in {\rm E}^d}$ is taken to be an independent copy if $e$ satisfies one of the conditions which appear in \eqref{A3-cond}  and  $\tau_e^*=\tau_e$ otherwise.

We define the event $A$ as
  \begin{equation}\label{A3'}
\begin{split}
  A=\{\tau^*\in A_3\}\cap \{
  {\rm B}^j(l;n)\text{ is black for $\tau$ and }\exists\gamma\in \O_N\text{ crosses }{\rm B}^j(l;n)\}.
\end{split}
  \end{equation}
 On the event $A$, if $e\notin \tilde{\rm C}^j(l;n)$, then $\tau_e^*\geq\tau_e$. Thus, the following proposition follows.
  \begin{prop}\label{A3-tilde}
    On the event $A$, the following holds.  For any $v,w\in \iota({\rm B}^j(l;n))$ with $|v-w|_1\ge (\log{}N)^{\frac{1}{4d^2}}$ and a path $\pi: v \to w$ satisfying $\pi\cap \tilde{\rm C}^j(l;n)=\emptyset$,
  \begin{equation}\label{A3-cond2}
    \begin{split}
       {\rm T}^*(\pi)\geq{}(\underline{\tau}+\delta_{\ref{lem2}}){}|v-w|_1.
      \end{split}
    \end{equation}
    \end{prop}
  \begin{prop}\label{prob A3}
  For any $\delta>0$, there exists $s_0>0$ such that for any $s<s_0$, there exists $c_0(s)>0$ such that for any $c<c_0(s)$,
    $$\P\left(\tau^*\in A_3\right)\geq{}N^{-\delta}.$$
    \end{prop}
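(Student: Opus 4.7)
The plan is to decompose $A_3$ into four conditions on disjoint edge sets and then combine with $\tilde A_3$ via monotonicity. Write $A_3=E_1\cap E_2\cap E_3\cap E_4$, where $E_i$ is the event that the $i$-th constraint in \eqref{A3-cond} holds. The four edge sets involved, namely those in $\tilde E^j(l;n)$, those in $\tilde C^j(l;n)\setminus\tilde E^j(l;n)$, edges touching $F_j(l;n)$ away from $\partial B^j(l;n)$, and boundary-crossing edges not in $\tilde C^j$, are pairwise disjoint, so the $E_i$ are mutually independent and $\P(A_3)=\prod_{i=1}^4\P(E_i)$.

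The bounds on $\P(E_1),\P(E_2),\P(E_4)$ will follow by direct adaptations of the calculations in the previous subsection: each of the three constrained edge counts is at most polynomial in $f_{d,d-1}(N)$, while the tail cost per edge is a power of $\log N$ strictly smaller than $1$ (respectively $(d-1)/d$ for $E_1$, a constant for $E_2$, and $(d-1)/(2d^2)$ for $E_4$). The resulting products are $\exp(-o(\log N))$ and therefore at least $N^{-\delta/10}$ once $s$ and $c$ are chosen small enough.

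The crucial new step is $\P(E_3)$, which captures the borderline character of $r=d-1$. Using the counting bound \eqref{121}, the tail estimate $\P(\tau_e\ge t)\ge\beta e^{-\alpha t^{d-1}}$ from hypothesis (3) of Theorem~\ref{thm3}, and the identity $\P(\tau_e>a\lor b)\ge\P(\tau_e>a)\P(\tau_e>b)$, the negative log-probability splits as a leading double sum
\[
\alpha C(s)c^{d-1}f_{d,d-1}(N)^{d-1}\Bigl(\sum_{\ell_1=0}^{L_1}\tfrac{1}{\ell_1+1}\Bigr)\Bigl(\sum_{\ell_2=0}^{L_2}\tfrac{1}{(\log(\ell_2+2))^{d-1}}\Bigr)
\]
plus lower-order contributions from the $M^2$ floor, with $L_1=(\log N)^{1/(8d^2)}$ and $L_2\sim 2d s_1 f_{d,d-1}(N)$. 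The first factor is of order $\tfrac{1}{8d^2}\log\log N$, and an integral estimate gives the second as $\sim d^{d-1}s_1 f_{d,d-1}(N)(\log\log N)^{-(d-1)}$. Substituting $f_{d,d-1}(N)^d=(\log N)(\log\log N)^{d-2}$, every $\log\log N$ factor cancels exactly, leaving a bound of the form $c^{d-1}s_1\log N$ times dimension-dependent constants. Taking $s$ small (so that $s_1=s^{1+d/(8r)}$ is also small) and then $c$ small gives $\P(E_3)\ge N^{-\delta/10}$, hence $\P(A_3)\ge N^{-\delta/2}$. I expect this cancellation to be the main technical obstacle, since the precise interlocking of $L_1$, $L_2$ and the definition of $f_{d,d-1}(N)$ is exactly what produces the extra $(\log\log N)^{(d-2)/d}$ factor in $f_{d,d-1}$; any looser cutoff would destroy the estimate.

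It remains to show $\P(\tilde A_3)\to 1$ and to combine the two estimates. Restricting the infimum to paths disjoint from $\tilde C^j$ only increases the passage time, so Lemma~\ref{lem2} applied to $\tau^*$, whose edge marginals coincide with $F$, gives $\P(t^*_{\mathrm{avoid}}(v,w)<(F^-+\delta_7)|v-w|_1)\le e^{-K|v-w|_1}$. Summing over the $O(n^{2d})=O((\log N)^2)$ pairs $(v,w)\in \iota(B^j(l;n))^2$ with $|v-w|_1\ge(\log N)^{1/(8d^2)}$ yields $\P(\tilde A_3^c)\le(\log N)^2 e^{-K(\log N)^{1/(8d^2)}}=o(1)$. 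Finally, $\tilde A_3$ depends only on $\tau^*$ off $\tilde C^j$, so it is independent of $E_1\cap E_2$; and $\tilde A_3$ together with $E_3\cap E_4$ are all increasing in $\tau^*$, so the Harris--FKG inequality gives
\[
\P(A_3\cap\tilde A_3)=\P(E_1)\P(E_2)\P(E_3\cap E_4\cap\tilde A_3)\ge\P(A_3)\P(\tilde A_3)\ge\tfrac12 N^{-\delta/2}\ge N^{-\delta}
\]
for all sufficiently large $N$.
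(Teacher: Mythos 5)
Your proposal is correct and follows essentially the same route as the paper: a product decomposition over the (pairwise disjoint, hence independent) constrained edge classes, with the decisive step being the double sum $\sum_{\ell}\frac{1}{\ell+1}\sum_k\frac{1}{(\log(k+2))^{d-1}}$ whose $\log\log N$ factors cancel against $f_{d,d-1}(N)^{d}=(\log N)(\log\log N)^{d-2}$. The only (cosmetic) difference is in attaching $\tilde A_3$: the paper conditions on $A_3$ and invokes first-order stochastic dominance of the conditioned law to get $\P(\tilde A_3\mid A_3)\to 1$, while you split off the non-monotone events $E_1,E_2$ by independence and apply Harris--FKG to the increasing events $E_3,E_4,\tilde A_3$; both reduce to the same union bound over pairs via Lemma~\ref{lem2}, which you make explicit and the paper leaves implicit.
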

  \begin{proof} 
  If we take $c>0$ sufficiently small depending on $s>0$, by  Lemma~\ref{estimate-F}, then
    \begin{equation*}
\begin{split}
  \P\left(\tau^*\in A_3\right)&\geq{}\left(\alpha e^{-\beta c^{2(d-1)}{\rm f}_{d,d-1}(N)^{d-1}}\right)^{|\tilde{\rm E}|}\P\left(\tau_e\le \underline{\tau}+c^2\right)^{|\tilde{\rm C}|}\\
  &\hspace{4mm}\times \prod^{\lf (\log{N})^{\frac{1}{4d^2}}\rf}_{\ell=0}\prod^{2n_1}_{k=0}\left(\alpha e^{-\beta\left(\frac{c{\rm f}_{d,d-1}(N)}{(\ell+1)\log{(k+2)}}\right)^{d-1}}\right)^{C(s)(\ell+1)^{d-2}}\left(\alpha e^{-\beta (\log{N})^{\frac{1}{2d}}}\right)^{2d|\partial{}{\rm B}|}\\
  &\ge \exp{\left(-\frac{\delta \log{N}}{2}\right)}\exp{\left(-\beta   C(s)\sum^{\lf (\log{N})^{\frac{1}{4d^2}}\rf}_{\ell=0}\sum^{2n_1}_{k=0}c^{d-1}\frac{{\rm f}_{d,d-1}(N)^{d-1}}{(\ell+1)(\log{(k+2)})^{d-1}}\right)}\\
  &\geq \exp{(-\delta\log{N})}=N^{-\delta},
  \end{split}
\end{equation*}
    where  we have used the following facts  in the last inequality that for $s$ small  enough,
    $$\sum^{2n_1}_{k=0}\frac{1}{(\log{(k+2)})^{d-1}}\leq{}\frac{{\rm f}_{d,d-1}(N)}{(\log{\log{N}})^{d-1}},$$
    $$\sum^{\lf (\log{N})^{\frac{1}{4d^2}}\rf}_{\ell=0} \frac{1}{\ell+1}\leq \log{\log{N}}.$$
  \end{proof}

    \begin{prop}\label{crucial-prop3}
   If $s$ is sufficiently small depending on $M$ and $c$ is sufficiently small depending on $s$, then on the event $A$, unless $0\in {\rm B}^j(l;n)$ or $N\mathbf{e}_1\in {\rm B}^j(l;n)$, any optimal path $\pi^*\in\O^*_N$ needs to pass through at least one edge of $\tilde{\rm E}^j(l;n)$.
    \end{prop}
    \begin{proof}
      By the same argument as in \eqref{new-passage}, one can check that
      \begin{equation}\label{differ}
        \rmT_N-\frac{\delta_{\ref{lem2}}}{4}n>{\rm T}^*_N.\end{equation}
      Thus any optimal path $\pi^*\in\O^*_N$ has to enter inside ${\rm B}^j(l;n)$. We take $\pi^*\in\O^*_N$ which is a self avoiding path and write $\pi^*=\{x_0,\cdots,x_K\}$.
      We define sequences $(p_i,q_i)$ inductively as follows: let
      $$p_0=\min\{l\in\Iintv{0, K}|~x_l\in {\rm B}^j(l;n)\}\text{ and }q_0=\min \{l\in\Iintv{p_0+1, K}|~x_l\in {\rm C}^j(l;n)\}.$$
    If we could define $(p_i,q_i)^k_{i=0}$, then we define
      \al{     p_{i+1}&=\min\{l\in\Iintv{q_k+1,K}|~x_l\in \Z^d\backslash {\rm C}^j(l;n)\}-1,\\
        q_{i+1}&=\min\{l\in\Iintv{p_{i+1}+1,K}|~x_l\in {\rm C}^j(l;n)\}.
      }
      Let $I=\inf\{l|~q_l=\infty\}$. We redefine $q_{I}=\max\{1\le l\le K|~x_l\in {\rm B}^j(l;n)\}$. (See Figure~\ref{fig:ell}.) By the same argument as in \eqref{new-passage}, one can check that
      \ben{\label{ato de tukau}
        {\rm T}^*(x_{p_0},x_{q_I}) 
      \le (\underline{\tau}+c^2)|x_{p_0}-x_{q_I}|_1+8dMn_1.
      }

      Before going into the details, we explain the strategy here. When $r>d-1$, since all the weights in ${\rm B}^j(l;n)\backslash {\rm C}^j(l;n)$  are sufficiently large under the condition $A_2$, optimal paths from $x_{p_0}$ to $x_{q_I}$ lies only on ${\rm C}^j(l;n)$ as we have proved. On the other hand, when $r=d-1$, since we do not subject conditions on all of the weights in ${\rm B}^j(l;n)$, we no longer conclude that optimal paths do not touch ${\rm B}^j(l;n)\backslash ({\rm C}^j(l;n)\cup \partial {\rm B}^j(l;n))$. However, if an path from $x_{p_0}$ to $x_{q_I}$ enjoys passing through ${\rm B}^j(l;n)\backslash {\rm C}^j(l;n)$ very long time, then Proposition~\ref{A3-tilde} yields that this path is not optimal. Thus, any optimal paths from $x_{p_0}$ to $x_{q_I}$ mostly lies on ${\rm C}^j(l;n)$. Hence, any optimal path needs to pass through an edge of  $\tilde{\rm E}^j(l;n)$. Let us make the above heuristic  rigorous.\\

      \underline{Step 1} ($|x_{p_i}-x_{q_i}|_1>n_1/4$): We will show that for any $i\in \Iintv{1,I-1}$, $|x_{p_i}-x_{q_i}|_1>n_1/4$. Until Step 1 is completed, we assume that $|x_{p_i}-x_{q_i}|_1\le n_1/4$ and finally we shall derive a contradiction. By Lemma \ref{lem11}--(i), there exists a path $\pi^*:x_{p_i}\to x_{q_i}$ on ${\rm C}^j(l;n)$ such that
      \begin{equation}\label{basic}
      {\rm T}^*(x_{p_i},x_{q_i})  \leq {\rm T}^*(\pi^*)\leq (\underline{\tau}+c^2)|x_{p_i}-x_{q_i}|_1+c^2{\rm f}_{d,d-1}(N)\mathbf{1}_{\{W_{p_i}\neq W_{q_i}\}}.
      \end{equation}
      We divide into two cases.
      
        \underline{Case 1} ($W_{x_{p_i}}=W_{x_{q_i}}$):
  If $|x_{p_i}-x_{q_i}|_1\geq (\log{N})^{\frac{1}{4d^2}}$, by Proposition~\ref{A3-tilde}, ${\rm T}^*(x_{p_i},x_{q_i})\geq{}(\underline{\tau}+\delta_{\ref{lem2}})|x_{p_i}-x_{q_i}|_1,$ which contradicts \eqref{basic}.  Hence,  $|x_{p_i}-x_{q_i}|_1< (\log{N})^{\frac{1}{4d^2}}$.       Moreover, if there exists $k\in \Iintv{p_i,q_i}$ such that $|x_{p_i}-x_k|_1\ge (\log{N})^{\frac{1}{4d^2}}$, then Proposition~\ref{A3-tilde} yields 
      \begin{align*}
        {\rm T}^*(x_{p_i},x_{q_i})&\ge (\underline{\tau}+\delta_{\ref{lem2}})(\log{N})^{\frac{1}{4d^2}}\\
        &>(\underline{\tau}+c^2)|x_{p_i}-x_{q_i}|_1,
      \end{align*}
      which contradicts \eqref{basic}. Thus for any $k\in \Iintv{p_i,q_i}$, $|x_{p_i}-x_k|_1< (\log{N})^{\frac{1}{4d^2}}$.

      If, in addition, there exists $k\in \Iintv{p_i,q_i}$ such that $x_k\in\partial {\rm B}^j(l;n)$, then the path $\{x_{p_i},\cdots, x_{q_i}\}$ crosses $\partial {\rm  B}^j(l;n)$ without using $\tilde{\rm  C}^j(l;n)$. Hence, by $A_3$--condition,
      \begin{align*}
        {\rm T}^*(x_{p_i},x_{q_i})&\ge (\log{N})^{\frac{1}{2d^2}}\\
        &>(\underline{\tau}+c^2)(\log{N})^{\frac{1}{4d^2}} \\
        &\geq (\underline{\tau}+c^2)|x_{p_i}-x_{q_i}|_1,
      \end{align*}
      which contradicts \eqref{basic}. Hence $x_k\in \iota({\rm B}^j(l;n))$ and, since $|x_{p_i}-x_k|_1< (\log{N})^{\frac{1}{4d^2}}$ as proved before, $x_k\in {\rm F}^j(l;n)$ for any $p_i<k<q_i$. 

      On the other hand,  since  $x_k\in {\rm F}^j(l;n)$ for any $p_i<k<q_i$, by the remark below Definition~\ref{Def:A3},   we have
      \begin{align*}
        {\rm T}^*(x_{p_i},x_{q_i})&\ge M^2|x_{p_i}-x_{q_i}|_1\\
        &>(\underline{\tau}+c^2)|x_{p_i}-x_{q_i}|_1,
        \end{align*}
      which contradicts \eqref{basic}.
      
      \underline{Case 2} ($W_{x_{p_i}}\neq W_{x_{q_i}}$): Suppose that $W_{x_{p_i}}\neq W_{x_{q_i}}$ holds. If there exists $k\in \Iintv{p_i,q_i}$  such that $x_k\in\partial {\rm B}^j(l;n)$, by Lemma \ref{lem11}--(iv), (v) and $|x_{p_i}-x_{q_i}|_1\le n_1/4$, then we have $\max\{|x_{p_i}-x_k|_1,|x_{q_i}-x_k|_1\}\ge n_1/4$. It follows from Proposition~\ref{A3-tilde} that
      \begin{align*}
        {\rm T}^*(x_{p_i},x_{q_i})&\ge (\underline{\tau}+\delta_{\ref{lem2}})n_1/4\\
        &>(\underline{\tau}+c^2)|x_{p_i}-x_{q_i}|_1+c^2{\rm f}_{d,d-1}(N),
        \end{align*}
      which contradicts \eqref{basic}. Hence, we have that for any $k\in \Iintv{p_i,q_i}$ , $x_{k}\in \iota({\rm B}^j(l;n))$.

      Next we suppose that  there exists $k_1\in \Iintv{p_i,q_i}$  such that $x_{k_1}\in {\rm B}^j(l;n)\backslash( {\rm F}^j(l;n)\cup {\rm C}^j(l;n))$. Since   $x_{k}\in \iota({\rm B}^j(l;n))$ for any $k\in \Iintv{p_i,q_i}$ and $\ell_1(x_{p_i})=0$, $\ell_2(x_k)\leq 2n_1$ 
    and  $\ell_1(x_k)\leq k-p_1.$     Since $|x_{p_i}-x_{k_1}|_1\ge (\log{N})^{\frac{1}{4d^2}}$, under $A_3$--condition, we have  that by Proposition~\ref{A3-tilde},
          \begin{equation*}
       \begin{split}
    {\rm T}^*(x_{p_i},x_{q_i})  
         &\geq{}(\underline{\tau}+\delta_{\ref{lem2}})|x_{p_i}-x_{q_i}|_1\lor\left(\sum^{\lf (\log{N})^{\frac{1}{4d^2}}\rf}_{\ell=0}\frac{c{\rm f}_{d,d-1}(N)}{(\ell+1)\log{(2n_1)}}\right)\\
         &> (\underline{\tau}+\delta_{\ref{lem2}})|x_{p_i}-x_{q_i}|_1\lor (c{\rm f}_{d,d-1}(N)/4d^2)\\
         &> (\underline{\tau}+c^2)|x_{p_i}-x_{q_i}|_1+c^2{\rm f}_{d,d-1}(N),
       \end{split}
          \end{equation*}
          which also contradicts \eqref{basic}. Thus for any $k\in \Iintv{p_i,q_i}$ , $x_k\in {\rm C}^j(l;n)\cup {\rm F}^j(l;n)$.  Due to the definition of $(p_i,q_i)$, $x_k\notin {\rm C}^j(l;n)$ for $p_i<k<q_i$, which yields $x_k\in  {\rm F}^j(l;n)$.

          Lemma \ref{lem11}--(iv) yields that there exists $k_1\in \Iintv{p_i,q_i}$  such that $\ell_2(x_{k_1})=0$, which implies $$\max\{\ell_2(x_k)|~k\in \Iintv{p_i,q_i}\}\le q_i-p_i.$$ It follows from $A_3$--condtion that
     \begin{equation*}
       \begin{split}
         {\rm T}^*(x_{p_i},x_{q_i}) 
         &\geq{}{} \left(\sum^{q_i-p_i-1}_{\ell=0}\frac{c{\rm f}_{d,d-1}(N)}{(\ell +1)\log{|q_i-p_i+2|}}\right)\\
         &>  c{\rm f}_{d,d-1}(N)/2.
       \end{split}
     \end{equation*}
     Moreover, by Proposition~\ref{A3-tilde}, if $|x_{p_i}-x_{q_i}|\geq (\log{N})^{\frac{1}{4d^2}}$,  then
     $${\rm T}^*(x_{p_i},x_{q_i})\geq (\underline{\tau}+\delta_{\ref{lem2}})|x_{p_i}-x_{q_i}|_1.$$
     Putting things together, we have
     \al{
       {\rm T}^*(x_{p_i},x_{q_i})&\geq (\underline{\tau}+\delta_{\ref{lem2}})|x_{p_i}-x_{q_i}|_1\lor \frac{ c{\rm f}_{d,d-1}(N)}{2}\\
       &> (\underline{\tau}+c^2)|x_{p_i}-x_{q_i}|_1+c^2{\rm f}_{d,d-1}(N),
       }
     which contradicts \eqref{basic}.

     In all cases, we derived contradictions. Hence, we have $|x_{q_i}-x_{p_i}|_1>n_1/4$.\\
  
     \underline{Step 2} ($|x_{q_I}-x_{p_0}|_1\geq \frac{\delta_{\ref{lem2}}{}n}{4M}$): 
 Since we only resample the edges  inside  ${\rm B}^j(l;n)$ and the paths $\pi^*|_{0\to x_{p_0}}$ and $\pi^*|_{0\to x_{q_I}}$ does not use such edges,
     \begin{equation*}
       \begin{split}
         &{\rm T}^*_N={\rm T}^*(0,x_{p_0})+{\rm T}^*(x_{p_0},x_{q_I})+{\rm T}^*(x_{q_I},N\mathbf{e}_1),\\
         &{\rm T}^*(0,x_{p_0})\geq \rmT(0,x_{p_0}),\\
         &{\rm T}^*(x_{q_I},N\mathbf{e}_1)\geq \rmT(x_{q_I},N\mathbf{e}_1).
          \end{split}
     \end{equation*}
     Moreover,  by the triangular inequality, we get
     $$\rmT_N\leq \rmT(0,x_{p_0})+\rmT(x_{p_0},x_{q_I})+\rmT(x_{q_I},N\mathbf{e}_1).$$
         Thus, by \eqref{differ}, we have
     \begin{equation*}
       \begin{split}
         \delta_{\ref{lem2}} n/4&\le \rmT_N -{\rm T}^*_N\\
         &\leq \rmT(0,x_{p_0})+\rmT(x_{p_0},x_{q_I})+\rmT(x_{q_I},N\mathbf{e}_1)-({\rm T}^*(0,x_{p_0})+{\rm T}^*(x_{p_0},x_{q_I})+{\rm T}^*(x_{q_I},N\mathbf{e}_1))\\
         &\leq \rmT(x_{p_0},x_{q_I})-{\rm T}^*(x_{p_0},x_{q_I})\leq \rmT(x_{p_0},x_{q_I}).
    \end{split}
     \end{equation*}
         By (Black-2), this is further bounded from above by $M(|x_{q_I}-x_{p_0}|_1\lor (\log{N})^{\frac{1}{4d^2}}).$  Therefore $\delta_{\ref{lem2}} n/4\leq M|x_{q_I}-x_{p_0}|_1$, which implies
     $$|x_{p_0}-x_{q_I}|_1\geq\frac{\delta_{\ref{lem2}}{}n}{4M}.$$
         \underline{Step 3} (Conclusion):          Note that
       \begin{equation}\label{diss}
         \begin{split}
           |x_{p_0}-x_{q_I}|_1 \le \sum^{I}_{i=0}|x_{q_i}-x_{p_i}|_1+\sum^{I-1}_{i=0}|x_{p_{i+1}}-x_{q_i}|_1.
       \end{split}
       \end{equation}
       By Step 1 and Proposition~\ref{A3-tilde}, for $i\in \Iintv{1,I-1}$,
       \begin{equation}\label{kkk1}
         (\underline{\tau}+\delta_{\ref{lem2}})|x_{q_i}-x_{p_i}|_1\leq {\rm T}^*(x_{p_i},x_{q_i}).
       \end{equation}
       Moreover, by Proposition~\ref{A3-tilde}, for $i=0$ or $i=I$, 
        \begin{equation}\label{kkk2}
         (\underline{\tau}+\delta_{\ref{lem2}})\left(|x_{q_i}-x_{p_i}|_1-(\log{N})^{\frac{1}{4d^2}}\right) \leq {\rm T}^*(x_{p_i},x_{q_i}).
       \end{equation}
       It follows that
         \begin{equation}\label{eval}
           \begin{split}
             &\underline{\tau}|x_{p_0}-x_{q_I}|_1+\delta_{\ref{lem2}}\sum^{I}_{i=0}|x_{q_i}-x_{p_i}|_1-2(\underline{\tau}+\delta)(\log{N})^{\frac{1}{4d^2}}\\
             &\leq \sum^{I}_{i=0}(\underline{\tau}+\delta_{\ref{lem2}})|x_{q_i}-x_{p_i}|_1+\underline{\tau}\sum^{I-1}_{i=0}|x_{p_{i+1}}-x_{q_i}|_1-2(\underline{\tau}+\delta)(\log{N})^{\frac{1}{4d^2}}\\
             &\le \sum^{I}_{i=0}{\rm T}^*(x_{p_i},x_{q_i})+\sum^{I-1}_{i=0}{\rm T}^*(x_{p_{i+1}},x_{q_i})={\rm T}^*(x_{p_0},x_{q_I}) \\
             &\le (\underline{\tau}+c^2)|x_{p_0}-x_{q_I}|_1+8dMn_1,
       \end{split}
         \end{equation}
         where  we have used \eqref{diss} in the first inequality, \eqref{kkk1} and  \eqref{kkk2} in the second inequality and \eqref{ato de tukau} in the third inequality. Comparing the left and right hand side of \eqref{eval}, we have
         \ben{\label{benki}
           \delta_{\ref{lem2}}\sum^{I}_{i=0}|x_{q_i}-x_{p_i}|_1\leq 2(\underline{\tau}+\delta)(\log{N})^{\frac{1}{4d^2}}+c^2|x_{p_0}-x_{q_I}|_1+8dMn_1.
           }
         Since $|x_{p_0}-x_{q_I}|_1\le 3dn$, for $c>0$ small enough, we have
         \ben{\label{benki2}
           2(\underline{\tau}+\delta)(\log{N})^{\frac{1}{4d^2}}+c^2|x_{p_0}-x_{q_I}|_1\leq 8dMn_1.
           }
Combining \eqref{benki}, \eqref{benki2} and Step 1, we obtain
         \begin{equation}\label{upper-es}
        \frac{(I-1)\delta_{\ref{lem2}}n_1}{4}\leq    \delta_{\ref{lem2}} \sum^{I}_{i=0}|x_{q_i}-x_{p_i}|_1\leq 16dM n_1.
           \end{equation}
   Hence, if we take $M>100d(1+\delta_{\ref{lem2}}^{-1})$, then we have $I\leq M^2$. If $\pi^*$ does not pass through any edge of $\tilde{\rm E}^j(l;n)$, by $\max_{a\in {\rm C}^j(l;n)} \sharp W_a\leq 2dn_1$, then for any $i\in \Iintv{0,I-1}$,  $|x_{p_{i+1}}-x_{q_i}|_1\le 2dn_1$. This yields that
           \begin{equation}\label{eval3}
             \sum^{I-1}_{i=0}|x_{p_{i+1}}-x_{q_i}|_1\leq 2dn_1 I\leq 2d M^3 n_1.
             \end{equation}
          Recall we have defined $n=\lf s{\rm f}_{d,r}(N)\rf\text{ and }n_1=2\lf s^2{\rm f}_{d,r}(N)\rf$.   Therefore, we have
   \al{
             \frac{\delta_{\ref{lem2}}{}n}{4M}&\leq |x_{p_0}-x_{q_I}|_1\\
             &\leq \sum^{I}_{i=0}|x_{q_i}-x_{p_i}|_1+\sum^{I-1}_{i=0}|x_{p_{i+1}}-x_{q_i}|_1\\
             &\leq 16 \delta_{\ref{lem2}}^{-1}d Mn_1+2d M^3 n_1\leq M^4 s n,
   }
   where we have used Step 2 in the first inequality,  \eqref{diss} in the second inequality, \eqref{upper-es} and \eqref{eval3} in the third inequality.
           This leads to a contradiction if $s$ is sufficiently small depending on $M$ and $\delta_{\ref{lem2}}$. Thus we complete the proof.
    \end{proof}
    Proposition~\ref{crucial-prop3}   implies that on the event $A$, ${\rm B}^j(l;n)$ is good for $\tau^*$. Hence, we obtain \eqref{crucial}.
    Together with Proposition~\ref{prob A3}, as in \eqref{combining dayo}, we conclude Proposition~\ref{crucial-don}.
\section*{Acknowledgements}
I would like to thank Professor Ryoki Fukushim giving me a plenty of helpful advice, encouraging me to carry out the research. I also would like to express my gratitude to Professor Antonio Auffinger for suggesting me this problem and introducing the idea of the arguments in \cite{BK93}. I would like to show my appreciation to Professor Takashi Kumagai for a lot of advice and support.
 I am indebted to an anonymous reviewer of an earlier paper for
providing insightful comments.
This research is partially supported by JSPS KAKENHI	16J04042 and Kyoto University Top Global University Project.

\end{document}